\newtheorem{theorem}{Theorem}[section]
\newtheorem{lemma}{Lemma}[section]
\newtheorem{definition}[theorem]{Definition}
\newtheorem{assumption}[theorem]{Assumption}
\theoremstyle{remark}
\newtheorem{remark}{Remark}[section]
\newtheorem{corollary}{Corollary}[section]
\numberwithin{equation}{section}
\def\Dt{\partial_t^\alpha}
\def\Dmu{D^{(\mu)}}
\def\Imu{I^{(\mu)}}
\def\N+{n\in\mathbb{N}^{+}}
\def\FT{\mathcal{F}}
\def\re{\operatorname{Re}}
\begin{document}

\title[recovering the distributed fractional derivative]{Fractional diffusion:
recovering the distributed fractional derivative from overposed data}

\author{ William Rundell \ and Zhidong Zhang}
\address{Department of Mathematics. Texas A\&M University, USA}
\email{rundell@math.tamu.edu\quad zhidong@math.tamu.edu}
\date{}
\maketitle

\begin{abstract}
There has been considerable recent study in ``subdiffusion'' models
that replace the standard parabolic equation model by a one with
a fractional derivative in the time variable.
There are many ways to look at this newer approach and one such is to
realize that the order of the fractional derivative is related to
the time scales of the underlying diffusion process.
This raises the question of what order $\alpha$ of derivative should be taken
and if a single value actually suffices.
This has led to models that combine a finite number of these derivatives
each with a different fractional exponent $\alpha_k$ and different
weighting value $c_k$ to better model a greater possible
range of time scales.
Ultimately, one wants to look at a situation that combines
derivatives in a continuous way -- the so-called distributional model
with parameter $\mu(\alpha)$.

However all of this begs the question of how one determines this
``order'' of differentiation.
Recovering a single fractional value has been an active part of the process
from the beginning of fractional diffusion modeling and if this is the
only unknown then the markers left by the fractional order derivative
are relatively straightforward to determine.
In the case of a finite combination of derivatives this
becomes much more complex due to the more limited analytic tools available
for such equations, but recent progress in this direction has been made,
\cite{LiYamamoto:2015,LiLiuYamamoto:2015}.
This paper considers the full distributional model where the order
is viewed as a function $\mu(\alpha)$ on the interval $(0,1]$.
We show existence, uniqueness and regularity for an initial-boundary value
problem including an important representation theorem in the case of a single
spatial variable.
This is then used in the inverse problem of recovering the
distributional coefficient $\mu(\alpha)$ from a time trace of the solution
and a uniqueness result is proven.\\

\noindent\text{Keywords}: distributed-order fractional diffusion, 
uniqueness, inverse problem.
\\\\
\text{AMS subject classifications}: 35R30, 26A33, 60C22, 34A08.
\end{abstract}
\maketitle


\section{Introduction}\label{sec:intro}

Classical Brownian motion as formulated in
Einstein's 1905 paper \cite{Einstein:1905b}
can be viewed as a random walk
in which the dynamics are governed by an uncorrelated,
Markovian, Gaussian stochastic process.
The key assumption is that a change in the direction
of motion of a particle is random and that the mean-squared displacement
over many changes is proportional to time
$\langle x^2\rangle = C t$.
This easily leads to the derivation of the underlying differential equation
being the heat equation.

In fact we can generalize this situation to the case of a
continuous time random walk ({\sc ctrw}) where
the length of a given jump, as well as the waiting time elapsing
between two successive jumps follow a given probability density function.
In one spatial dimension, the picture is as follows:
a walker moves along the $x$-axis, starting at a position $x_0$ at time $t_0=0$.
At time $t_1$, the walker jumps to $x_1$, then at time $t_2$ jumps to $x_2$,
and so on.
We assume that the temporal and spatial increments
$\,\Delta t_n = t_n - t_{n-1}$, $\,\Delta x_n = x_n-x_{n-1}$
are independent, identically distributed random variables,
following probability density functions $\psi(t)$ and $\lambda (x)$,
respectively, which is known as the waiting time distribution and jump
length distribution, respectively.
Namely, the probability of $\Delta t_n$ lying in any interval $[a,b]\subset
(0,\infty)$ is
$ P(a<\Delta t_n<b) = \int_a^b \psi(t)\,dt$
and the probability of $\Delta x_n$ lying in any interval
$[a,b]\subset \mathbb{R}$ is $P(a<\Delta x_n <b) = \int_a^b \lambda(x)\,dx$.
For given $\psi$ and $\lambda$, the position $x$ of the walker
can be regarded as a step function of $t$.

It it easily shown using the Central Limit Theorem
that provided the first moment, or characteristic waiting time $T$, defined by
$T = \mu_1(\psi)=\int_0^\infty t\psi(t)\,dt$ and the second moment,
or jump length variance $\Sigma$,
$\mu_2(\lambda)=\int_{-\infty}^\infty x^2\lambda(t)\,dt$ are finite,
then the long-time limit again corresponds to Brownian motion,

On the other hand,
when the random walk involves correlations,
non-Gaussian statistics or a non-Markovian process
(for example, due to ``memory'' effects)
the diffusion equation will fail to describe the macroscopic limit.
For example, if we retain the assumption that $\Sigma$ is finite but
relax the condition on a finite characteristic waiting time so that
for large $t$ $\psi(t)  A/t^{1+\alpha}$ as $t\to\infty$ where
$0<\alpha\leq 1$ , then we get very different results.
Such probability density functions are often referred to as
a ``heavy-tailed.''
If in fact we take
\begin{equation}\label{eqn:frac_dist}
\psi(t) = \frac{A_\alpha}{B_\alpha+t^{1+\alpha}}
\end{equation}
then again it can be shown, \cite{MontrollWeiss:1965,KlafterSokolov:2011},
that the effect is to modify the Einstein formulation
$\langle x^2\rangle = C t$ to $\langle x^2\rangle = C t^\alpha$.

This above leads to a {\it subdiffusive\/} process and, importantly provides
a tractable model where the partial differential equation is replaced by
one with a fractional derivative in time of order $\alpha$.
Such objects have been a steady source of investigation over the last
almost 200 years beginning in the 1820s with the work of Abel
and continuing first by Liouville then by Riemann.

The fractional derivative operator can take several forms, the most usual
being either the Riemann-Liouville $^R\!D_0^\alpha$ based on Abel's original
singular integral operator,
or the Djrbashian-Caputo  $^C\!D^\alpha_0$ version,
\cite{Djrbashian:1989}, which reverses the order of the 
Riemann-Liouville formulation
\begin{equation}\label{eqn:frac_ders}
\begin{aligned}
^R\!D^\alpha_0 u &=\frac{1}{\Gamma(n-\alpha)}\frac{d^n}{dx^n}\int_0^x(x-t)^{\alpha
	+1-n}u(t)\,dt,\\
^C\!D^\alpha_0u &= \frac{1}{\Gamma(n-\alpha)}\int_0^x(x-t)^{\alpha+1-n}u^{(n)}(t)\,dt.\\
\end{aligned}
\end{equation}
The D{\v{z}}rba{\v{s}}jan-Caputo derivative tends to be more favored by
practitioners since it allows the
specification of initial conditions in the usual way.
Nonetheless, the Riemann-Liouville derivative enjoys certain analytic
advantages, including being defined for a wider class of functions
and possessing a semigroup property.

Thus the fractional-anomalous diffusion model gives rise to the fractional
differential equation
\begin{equation} \label{eqn:basic_one_term}
\partial_t^\alpha u - \mathcal{L} u = f(x,t),\qquad
x\in\Omega, t\in (0,T)
\end{equation}
where $\mathcal{L}$ is a uniformly elliptic differential operator on 
an open domain $\Omega\subset\mathbb{R}^d$ and $\partial_t^\alpha$ is 
one of the above fractional derivatives.
The governing function for the fractional derivative becomes
the Mittag-Leffler function $E_{\alpha,\beta}(z)$
which generalizes the exponential function that forms the key component
for the fundamental solution in the classical case when $\alpha=\beta=1$.
\begin{equation}\label{eqn:mlf}
E_{\alpha,\beta}(z) = \sum_{k=0}^\infty \frac{z^k}{\Gamma(\alpha k + \beta)}
\end{equation}
For the typical examples described here we have $0<\alpha\leq 1$ and
$\beta$ a positive real number although further generalization is certainly
possible.
See, for example, \cite{GorenfloKilbasMainardiRogosin:2014}.

During the past two decades, differential equations involving
fractional-order derivatives have received increasing attention in
applied disciplines.
Such models are known to capture more faithfully the dynamics of anomalous
diffusion processes in amorphous materials,
e.g., viscoelastic materials, porous media, diffusion on domains with
fractal geometry and option pricing models.
These models also describe certain diffusion processes more accurately
than Gaussian-based Brownian motion and have particular relevance
to materials exhibiting memory effects.
As a consequence, we can obtain fundamentally different physics.
There has been significant progress on both
mathematical methods and numerical algorithm design and, more recently,
attention has been paid to inverse problems.
This has shed considerable light on the new physics appearing,
\cite{JinRundell:2015,SokolovKlafterBlumen:2002}

Of course, such a specific form for $\psi(t)$ as given by \ref{eqn:frac_dist}
is rather restrictive as it assumes a quite specific scaling factor between 
space and time distributions and there is no reason to expect nature
is so kind to only require a single value for $\alpha$.

One approach around this is to take a finite sum of such
terms each corresponding to a different value of $\alpha$.
This leads to a model where the time derivative is replaced by
a finite sum of fractional derivatives of orders $\alpha_j$ and
by analogy leads to the law  $\langle x^2\rangle = g(t,\alpha)$
where $g$ is a finite sum of fractional powers.
This formulation replaces the single value fractional derivative by a finite sum
$\sum_1^m q_j\partial_t^{\alpha_j} u$ where a linear combination of
$m$ fractional powers has been taken.
Physically this represents a fractional diffusion model
that assumes diffusion takes place in a medium in
which there is no single scaling exponent; for example,
a medium in which there are memory effects over multiple time scales.

This seemingly simple device leads to considerable complications.
For one, we have to use the so-called multi-index Mittag-Leffler function
$E_{\alpha_1,\,\ldots\,\alpha_m,\beta_1,\,\ldots\,\beta_m}(z)$ in place
of the two parameter $E_{\alpha,\beta}(z)$ and this adds complexity not only
notationally but in proving required regularity results for the basic
forwards problem of knowing $\Omega$, $\mathcal{L}$, $f$, $u_0$
and recovering $u(x,t)$, see \cite{LiYamamoto:2015,LiLiuYamamoto:2015} and the references within.

It is also possible to generalize beyond the finite sum by taking the so-called
distributed fractional derivative, 
\begin{equation}\label{eqn:distributional_der-def}
\partial_t^{(\mu)} u(t) = \int_0^1 \mu(\alpha) \partial_t^\alpha u(t) \,d\alpha.
\end{equation}
Thus the finite sum derivative can be obtained by taking
$\mu(\alpha) = \sum_{j=1}^m q_j\delta(\alpha-\alpha_j)$.
See \cite{Naber:2003,Kochubei:2008,MMPG:2008,Luchko:2009b,li2016analyticity},
for several studies incorporating this extension.
This in turn allows a more general function probability
density distribution function $\psi$ in \ref{eqn:frac_dist} and hence
a more general value for $g(t,\alpha)$.

The purpose of this paper is to analyze this distributed model extension to
equation~\eqref{eqn:basic_one_term} and the paper is organized as follows.
First, we demonstrate existence, uniqueness and regularity results for the
solution of the distributed fractional derivative model on a cylindrical region
in space-time $\Omega\times[0,T]$ where $\Omega$ is a bounded, open set in
$\mathbb{R}^d$. Second, in the case of one spatial variable, $d=1$,
we set up representation theorems for the solution analogous to that for
the heat equation itself, \cite{Cannon:1984}, and extended to the
case of a single fractional derivative in \cite{RundellXuZuo:2013}.

Section~\ref{sec:eur} looks at the assumptions to be made on the various terms
in \eqref{eqn:distributional_der-def} and utilizes these to show existence,
uniqueness and regularity results for the direct problem;
namely, to be given $\Omega$, $\mathcal{L}$, $f$, $u_0$ and the function
$\mu=\mu(\alpha)$, then to solve \eqref{eqn:distributional_der-def}
for $u(x,t)$.

Section~\ref{sect:representation} will derive several representation
theorems for this solution and these will be used in the final section
to formulate and prove a uniqueness result for the associated inverse problem
to be discussed below.
\medskip

However, there is the obvious question for all of these models:
what is the value of $\alpha$?
Needless to say there has been much work done on this; experiments have
been set up to collect additional information that allows a best fit
for $\alpha$ in a given setting.
One of the earliest works here is from 1975, \cite{Scher_Montroll:1975} and
in part was based on the Montroll-Weiss random walk model
\cite{MontrollWeiss:1965}.
See also \cite{HatanoHatano:1998}.
Mathematically the recovery in models with
a single value for $\alpha$ turns out to relatively
straightforward provided we are able to choose the type of data being measured.
This would be chosen to allow us to rely on the known asymptotic behavior
of the Mittag-Leffler function for both small and large arguments.
An exception here is when we also have to determine $\alpha$ as well
as an unknown coefficient in which case the combination problem can
be decidedly much more complex. 
See, for example,  \cite{cheng2009uniqueness, Li2013Simultaneous, RundellXuZuo:2013}.
Amongst the first papers in this direction with a rigorous existence and
uniqueness analysis is \cite{HatanoNakagawaWangYamamoto:2013}.

The multi-term case, although similar in concept,
is quite nontrivial but has been shown in
\cite{LiYamamoto:2015,LiLiuYamamoto:2015}.
In these papers the authors were able to prove an important uniqueness
theorem: if given the additional data consisting of the value of the normal
derivative $\frac{\partial u}{\partial\nu}$ at a fixed point
$x_0\in\partial\Omega$ for all $t$ then the sequence pair
$\{q_j,\alpha_j\}_{j=1}^m$ can be uniquely recovered.

The main result of the current paper in this direction is 
in Section~\ref{inverse_problem} where we show that
the uniqueness results of
\cite{LiYamamoto:2015,LiLiuYamamoto:2015}
can be extended to recover a suitably defined exponent function $\mu(\alpha)$.

\section{Preliminary material}\label{sec:eur}

Let $\Omega$ be an open bounded domain in $\mathbb{R}^d$
with a smooth ($C^2$ will be more than sufficient) boundary $\partial\Omega$
and let $T>0$ be a fixed constant.

$\mathcal{L}$ is a strongly elliptic, self-adjoint operator with
smooth coefficients defined on $\Omega$,
\begin{equation*}
\mathcal{L} u = \sum_{i,j=1}^d a_{ij}(x)\frac{\partial^2 u}{\partial x_i\partial x_j}
+ c(x) u
\end{equation*}
where $a_{ij}(x)\in C^1(\overline{\Omega})$, $c(x)\in C(\overline{\Omega})$, $a_{ij}(x)=a_{ji}(x)$ and
$\sum_{i,j=1}^d a_{ij}\xi_i\xi_j \geq \delta \sum_{i=1}^d \xi^2$ for some 
$\delta>0$, all $x\in\overline\Omega$ and all
$\xi=(\xi_1,\,\ldots\,\xi_d)\in \mathbb{R}^d$.

To avoid unnecessary complications for the main theme we will make the 
assumption of homogeneous Dirichlet boundary conditions on $\partial\Omega$
so that the natural domain for $\mathcal{L}$ is $H^2(\Omega)\cap H_0^1(\Omega)$.
Then $-\mathcal{L}$ has a complete, orthonormal system
of eigenfunctions $\{\psi_n\}_1^\infty$ in $L^2(\Omega)$
with $\psi_n\in H^2(\Omega)\cap H_0^1(\Omega)$
and with corresponding eigenvalues $\{\lambda_n\}$ such that
$0<\lambda_1\leq \lambda_2\leq\dots \leq\lambda_n \to \infty$ as $n\to\infty$.

The nonhomogeneous term will be taken to satisfy $f(x,t)\in C(0,T; H^2(\Omega))$.
This can be weakened to assume only $L^p$ regularity in time, but as shown in
\cite{LiLiuYamamoto:2015} this requires more delicate analysis.
The initial value $u_0(x)\in H^2(\Omega)$.
We will use $\langle\cdot,\cdot\rangle$ to denote the inner product in
$L^2(\Omega)$.

Throughout this paper we will, by following \cite{Kochubei:2008}, make the assumptions on the distributed
derivative parameter $\mu.$

\noindent
\begin{assumption}\label{mu_assumption}
$$\mu\in C^1[0,1],\ \mu(\alpha)\ge 0,\ \mu(1)\ne 0.$$ 
\end{assumption}
\begin{remark}
From these conditions it follows that
there exists a constant $C_\mu>0$ and an interval
$(\beta_0,\beta)\subset (0,1)$ such that
$\mu(\alpha)\ge C_\mu$ on $(\beta_0,\beta)$.
This will be needed in our proof of the representation theorem in 
Section~\ref{sect:representation}.
\end{remark}
We will use the Djrbashian-Caputo version for $\Dmu$:
$\Dmu u=\int_0^1 \mu(\alpha){}\Dt u {\rm d}\alpha$
with
$\Dt u=\frac{1}{\Gamma(1-\alpha)}\int_0^t(t-\tau)^{-\alpha}
\frac{d}{d\tau}u(x,\tau){\rm d}\tau$
and so 
\begin{equation}\label{eqn:dist_frac_der}
 D^{(\mu)}u=\int_0^t \left[\int_0^1 \frac{\mu(\alpha)}{\Gamma(1-\alpha)}
   (t-\tau)^{-\alpha} {\rm d}\alpha\right]\frac{d}{d\tau}u(x,\tau){\rm d}\tau
   := \int_0^t \eta(t-\tau)\frac{d}{d\tau}u(x,\tau){\rm d}\tau,
\end{equation}
where
\begin{equation}\label{eqn:dist_frac_eta}
\eta(s)=\int_0^1 \frac{\mu(\alpha)}{\Gamma(1-\alpha)}
  s^{-\alpha} {\rm d}\alpha.
\end{equation}

Thus our distributed differential equation (DDE) model in this paper will be
\begin{equation}\label{eqn:model_pde}
\begin{aligned}
\Dmu u(x,t) - \mathcal{L} u(x,t) &= f(x,t), &&\quad x\in\Omega,\quad t\in(0,T);\\
u(x,t) &= 0, &&\quad x\in\partial\Omega,\quad t\in(0,T);\\
u(x,0) &= u_0(x), &&\quad x\in\Omega.\\
\end{aligned}
\end{equation}

\subsection{A Distributional ODE}

Our first task is to analyze the ordinary distributed fractional order equation
\begin{equation}\label{eqn:dODE}
 \Dmu v(t)=-\lambda v(t),\ v(0)=1,\ t\in (0,T)
\end{equation}
and to show there exists a unique solution.
We will need some preliminary analysis to determine the integral operator
that serves as the inverse for $D^{(\mu)}$ in analogy with
the Riemann-Liouville derivative being inverted by the Abel operator.
If we now take the Laplace transform of $\eta$ in \eqref{eqn:dist_frac_eta}
then we have 
\begin{equation}\label{eqn:Phi}
 (\L\eta) (z)=\frac{\Phi(z)}{z},\quad \text{where }\ 
\Phi(z)=\int_0^1 \mu(\alpha)z^\alpha {\rm d}\alpha.
\end{equation}

\par The next lemma introduces an operator $\Imu$ to analyze 
the distributed ODE \eqref{eqn:dODE}.
\begin{lemma}\label{lem:kappa}
 Define the operator $\Imu$ as 
 $$
\Imu \phi(t)=\int_0^t \kappa(t-s)\phi(s){\rm d}s,\quad \text{where }\ 
\kappa (t)=\frac{1}{2\pi i} \int_{\gamma-i\infty}^{\gamma+i\infty}
 \frac{e^{zt}}{\Phi(z)}{\rm d}z.
$$
Then the following conclusions hold: 
\begin{itemize}
 \item [(1)] $\Dmu\Imu \phi(t)=\phi(t),\ \Imu\Dmu \phi(t)=\phi(t)-\phi(0)$ 
 for $\phi\in C^1(0,T);$
 \item [(2)] $\kappa(t)\in C^\infty (0,\infty)$ and
\begin{equation}\label{eqn:kappa_inequality}
\kappa(t)=|\kappa(t)|\le C \ln {\frac{1}{t}}\ \ \text{for sufficiently small}\ t>0.
\end{equation}
 \end{itemize}
\end{lemma}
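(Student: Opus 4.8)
The plan is to handle both assertions through the Laplace transform, exploiting the fact that the transforms of $\eta$ and $\kappa$ are reciprocal up to a factor of $z$. Throughout write $\hat g(z)$ for the Laplace transform of $g$. By \eqref{eqn:Phi} one has $\hat\eta(z)=\Phi(z)/z$, while the definition of $\kappa$ as a Bromwich integral says precisely that $\hat\kappa(z)=1/\Phi(z)$; consequently $\hat\eta(z)\,\hat\kappa(z)=1/z$, which is the transform of the primitive $\phi\mapsto\int_0^t\phi$. This single identity is the algebraic heart of part (1).

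For part (1) I would proceed as follows. The operator $\Imu\phi=\kappa*\phi$ is a convolution with $\Imu\phi(0)=0$, so its derivative transforms as $z\,\widehat{\Imu\phi}(z)=z\,\hat\kappa(z)\hat\phi(z)$, and therefore
\[
\widehat{\Dmu\Imu\phi}(z)=\hat\eta(z)\,z\,\hat\kappa(z)\,\hat\phi(z)=\frac{\Phi(z)}{z}\,z\,\frac{1}{\Phi(z)}\,\hat\phi(z)=\hat\phi(z),
\]
whence $\Dmu\Imu\phi=\phi$ by injectivity of the transform. Dually, from $\widehat{\Dmu\phi}(z)=\hat\eta(z)\,(z\hat\phi(z)-\phi(0))$ one gets $\widehat{\Imu\Dmu\phi}(z)=\hat\phi(z)-\phi(0)/z$, and inverting yields $\Imu\Dmu\phi=\phi-\phi(0)$. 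The points needing care are the Fubini interchange of the $\alpha$- and $\tau$-integrations in $\eta$, the local integrability of the singular kernels near $t=0$, and the legitimacy of differentiating the convolution; the cleanest way to secure the last is to place the derivative on the smooth factor $\phi$ rather than on $\kappa$, and to use the time-domain form of the identity, namely $\eta*\kappa=\mathbf 1$ (the inverse transform of $1/z$). The required integrability of $\kappa$ is exactly the bound \eqref{eqn:kappa_inequality}, so part (2) also underwrites part (1).

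For part (2) the engine is the behavior of $\Phi$ in the complex plane. I would first check that $\mu(\alpha)\ge0$ forces $\Phi(z)\ne0$ on the cut plane $\mathbb{C}\setminus(-\infty,0]$: for $z=re^{i\theta}$ with $0<|\theta|<\pi$ each factor $z^\alpha=r^\alpha e^{i\alpha\theta}$ has $\im(z^\alpha)$ of one fixed sign, so $\im\Phi(z)\neq0$, while $\Phi>0$ on $(0,\infty)$. This both licenses deforming the Bromwich contour onto a Hankel contour $H$ around the negative axis and, combined with the observation that $\Phi$ is a positive superposition of the complete Bernstein functions $z\mapsto z^\alpha$, shows that $1/\Phi$ is a Stieltjes function, hence completely monotone; Bernstein's theorem then represents $\kappa$ as a nonnegative density, giving $\kappa=|\kappa|$. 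Smoothness of $\kappa$ on $(0,\infty)$ follows by differentiating the contour integral under the integral sign, the factors $z^k e^{zt}$ being absorbed once $H$ is bent into the left half-plane.

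The quantitative estimate \eqref{eqn:kappa_inequality} is where the real work lies, and I would extract it from the large-argument asymptotics of $\Phi$. A Laplace/Watson analysis at the endpoint $\alpha=1$, where $\mu(1)\ne0$, gives $\Phi(z)\sim \mu(1)\,z/\ln z$ as $|z|\to\infty$ and, crucially, an integration by parts shows this holds uniformly in $\arg z$; thus $1/\Phi(z)\sim(\ln z)/(\mu(1)z)$, and since $(\ln z)/z$ is the transform of $-\gamma-\ln t$ this already predicts $\kappa(t)\sim\mu(1)^{-1}\ln(1/t)$. To make this rigorous I would substitute $z=w/t$ in the Hankel representation and reduce to a fixed contour, so that $|z|=|w|/t$ is uniformly large for small $t$; the dominant piece $\frac{\ln(1/t)}{\mu(1)}\cdot\frac{1}{2\pi i}\int_H\frac{e^{w}}{w}\,dw=\frac{\ln(1/t)}{\mu(1)}$ separates off, and the residual $\ln w$ contribution together with the expansion error is bounded, yielding $\kappa(t)=\mu(1)^{-1}\ln(1/t)+O(1)$. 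The main obstacle is precisely this uniform control of the expansion $\Phi(z)\sim\mu(1)z/\ln z$ away from the positive real axis and the estimates near the branch point $z=0$; once these are in hand the constant $C$ may be taken to be any number exceeding $1/\mu(1)$.
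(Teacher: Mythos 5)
The paper does not prove this lemma at all: it is quoted verbatim from \cite[Proposition~3.2]{Kochubei:2008}, with only the remark that the bound \eqref{eqn:kappa_inequality} needs no more than $C^1$ regularity of $\mu$. What you have written is therefore not an alternative to the paper's argument but a reconstruction of the cited one, and it is essentially faithful to it: part (1) via the reciprocity $\hat\eta(z)\hat\kappa(z)=1/z$ (equivalently $\eta*\kappa\equiv 1$), and part (2) via deformation to a Hankel contour together with the endpoint asymptotics $\Phi(z)\sim\mu(1)z/\ln z$, which indeed follows from a single integration by parts in $\alpha$ using exactly $\mu\in C^1[0,1]$ and $\mu(1)\neq 0$, uniformly in $|\arg z|\le\pi$. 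The rescaling $z=w/t$ and the identity $\frac{1}{2\pi i}\int_H e^w w^{-1}\,dw=1$ then give $\kappa(t)=\mu(1)^{-1}\ln(1/t)+O(1)$, which yields both the bound and the positivity of $\kappa$ for small $t$. This is the right proof.

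One caveat: your claim that $1/\Phi$ is a Stieltjes function, hence that $\kappa$ is completely monotone and nonnegative on all of $(0,\infty)$, overreaches relative to Assumption~\ref{mu_assumption}. The paper's own remark after the lemma points out that in \cite{Kochubei:2008} complete monotonicity of $\kappa$ is obtained only under the additional hypothesis $\mu(0)\neq 0$ or $\mu(\alpha)\sim a\alpha^{v}$, so you should not rely on it here without supplying the missing argument. Fortunately the lemma only asserts $\kappa=|\kappa|$ for \emph{sufficiently small} $t$, and that already follows from your asymptotic expansion $\kappa(t)=\mu(1)^{-1}\ln(1/t)+O(1)$; so the Stieltjes detour can simply be deleted without loss. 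With that excision, the proposal is a correct and complete sketch of the proof the paper delegates to its reference.
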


\begin{proof}
 This is \cite[Proposition~3.2]{Kochubei:2008}.
We remark that the result in this paper include further estimates
on $\kappa$ that require additional regularity on $\mu$.
However, for the bound \eqref{eqn:kappa_inequality} only $C^1$ regularity on
$\mu$ is needed.
\end{proof}

\begin{remark}
In \cite[Proposition~3.2]{Kochubei:2008}, if the condition either
$\mu(0)\ne 0$ or $\mu(\alpha)\sim a\alpha^v,\ a>0,\ v>0$ is added,
then $\kappa$ is completely monotone.
This property is not explicitly used in this paper, however as we remark after the
uniqueness result, this condition on $\kappa$ could be a useful basis for
a reconstruction algorithm.
\end{remark}

\par With $\Imu$, we have the following results.
\begin{lemma}\label{lem:existence_uniqueness_u_n}
For each $\lambda>0$ there exists a unique $u(t)$ which satisfies 
 \eqref{eqn:dODE}.
\end{lemma}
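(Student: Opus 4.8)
The plan is to invert $\Dmu$ using the operator $\Imu$ from Lemma~\ref{lem:kappa} so as to replace \eqref{eqn:dODE} by an equivalent linear Volterra integral equation of the second kind, and then to solve the latter by a contraction argument. First I would record the equivalence. If $v\in C^1(0,T)$ satisfies $v(0)=1$, then applying $\Imu$ to \eqref{eqn:dODE} and using $\Imu\Dmu v = v-v(0)$ from Lemma~\ref{lem:kappa}(1) shows that \eqref{eqn:dODE} is equivalent to
\[
 v(t)=1-\lambda\int_0^t\kappa(t-s)\,v(s)\,{\rm d}s .
\]
Conversely, a $C^1(0,T)$ solution of this integral equation solves \eqref{eqn:dODE}: applying $\Dmu$, using $\Dmu\Imu v=v$ and $\Dmu 1=0$, gives $\Dmu v=-\lambda v$, while $v(0)=1$ since $\Imu v$ vanishes at $t=0$. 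Hence it is enough to exhibit a unique solution of the integral equation and to confirm it is $C^1$ on $(0,T)$.

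Next I would solve the integral equation by Banach's fixed-point theorem. Define $\mathcal{T}v(t)=1-\lambda\int_0^t\kappa(t-s)v(s)\,{\rm d}s$ on $C[0,\delta]$. Because $\kappa\ge 0$ and $\kappa(t)\le C\ln(1/t)$ by \eqref{eqn:kappa_inequality}, the logarithmic singularity is integrable and $\int_0^\delta\kappa(\sigma)\,{\rm d}\sigma\le C\bigl(\delta\ln(1/\delta)+\delta\bigr)\to 0$ as $\delta\to 0$. Thus
\[
 \|\mathcal{T}v-\mathcal{T}w\|_{C[0,\delta]}\le\lambda\Bigl(\int_0^\delta\kappa(\sigma)\,{\rm d}\sigma\Bigr)\|v-w\|_{C[0,\delta]},
\]
so for $\delta$ small the factor is $<1$, $\mathcal{T}$ is a contraction, and we obtain a unique $v\in C[0,\delta]$. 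To cover $[0,T]$ I would continue: on $[\delta,2\delta]$ split $\int_0^t=\int_0^\delta+\int_\delta^t$, whereupon the first integral becomes a known continuous forcing term and the second is a Volterra operator whose norm is again bounded by $\lambda\int_0^\delta\kappa<1$. The same contraction therefore advances the solution by a fixed step, and finitely many steps reach $T$; uniqueness on each subinterval yields uniqueness on $[0,T]$.

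The remaining and most delicate point — the step I expect to be the main obstacle — is to show that the continuous solution just produced actually lies in $C^1(0,T)$, so that the equivalence above applies. The difficulty is that $\kappa\in C^\infty(0,\infty)$ but blows up logarithmically at the origin, so the convolution $\kappa\ast v$ cannot be differentiated naively. I would handle this via the resolvent: the solution can be written as $v=1-\lambda\int_0^t\rho(t-s)\,{\rm d}s$, where $\rho$ is the resolvent kernel of $\lambda\kappa$, constructed as the convergent Neumann series of iterated convolutions of $\kappa$. Since $\kappa$ is smooth away from $0$ with an integrable singularity, its convolution powers become progressively more regular and $\rho$ inherits $C^\infty(0,\infty)$ regularity; consequently $v\in C^1(0,T)$, with $v'$ allowed an integrable logarithmic singularity at $t=0$, which is harmless because Lemma~\ref{lem:kappa}(1) is stated on the open interval. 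The structural fact underpinning the whole argument is the reciprocal relation $\kappa\ast\eta\equiv 1$ on $(0,\infty)$, which follows from \eqref{eqn:Phi} since the Laplace transforms of $\kappa$ and $\eta$ are $1/\Phi(z)$ and $\Phi(z)/z$; once $C^1$ regularity is in hand this relation makes $\Dmu$ and $\Imu$ genuine inverses and closes the proof.
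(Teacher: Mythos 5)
Your proposal is correct and follows essentially the same route as the paper: both invert $\Dmu$ via $\Imu$ to obtain the Volterra equation $u=1-\lambda\Imu u$, apply the Banach fixed-point theorem on a short initial interval where $\lambda\|\kappa\|_{L^1}<1$, and then continue step by step to cover $(0,T)$ (the paper contracts in $L^1(0,t_1)$ rather than in the sup norm, an immaterial difference). Your closing discussion of $C^1$ regularity via the resolvent kernel addresses a point the paper leaves implicit (the equivalence in Lemma~\ref{lem:kappa} is stated for $C^1$ functions, and the paper defers smoothness to Lemma~\ref{lem:u_is_cm}), but it does not change the overall argument.
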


\begin{proof}
Lemma~\ref{lem:kappa} implies that \eqref{eqn:dODE} is 
 equivalent to 
$$u(t)=-\lambda \Imu u(t)+1 =: A_1 u.$$ 
Now the asymptotic and smoothness results of $\kappa(t)$ in Lemma~\ref{lem:kappa}
give $\kappa\in L^1(0,T)$, that is, there exists $t_1\in (0,T)$ such that 
$$\|\kappa\|_{L^1(0,t_1)}<\frac{1}{\lambda}.$$
 
 Hence, given $\phi_1, \phi_2\in L^1(0,t_1)$,  
 \begin{equation*}
  \begin{split}
    \|A_1(\phi_1)-A_1(\phi_2)\|_{L^1(0,t_1)}
 &\le \lambda \int_0^{t_1} \int_0^t |\kappa(t-s)|\cdot |\phi_1(s)-\phi_2(s)|\,
 ds dt\\
 &= \lambda\int_0^{t_1} |\phi_1(s)-\phi_2(s)| \int_s^{t_1} |\kappa(t-s)|\,
 dt ds\\
 &\le \lambda\int_0^{t_1} |\phi_1(s)-\phi_2(s)|\cdot \|\kappa\|_{L^1(0,t_1)} ds\\
 &= \lambda \|\kappa\|_{L^1(0,t_1)} \cdot\|\phi_1-\phi_2\|_{L^1(0,t_1)}.
  \end{split}
 \end{equation*}
From the fact that $0<\lambda\|\kappa\|_{L^1(0,t_1)}<1$,
$A_1$ is a contraction map on $L^1(0,t_1)$ and so by the
Banach fixed point theorem, there exists a 
unique $u_{1}(t)\in L^1(0,t_1)$ that satisfies $u_{1}=A_1u_{1}$.

For each $t\in (t_1,2t_1)$, we have 
\begin{equation*}
  u(t) = 1 -\lambda \Imu u(t)
= 1 -\lambda \int_{t_1}^{t} \kappa(t-s) u(s)\,ds
-\lambda \int_{0}^{t_1} \kappa(t-s) u(s)\,ds.
\end{equation*}
Since $u=u_{1}$ on $(0, t_1) $ which is now known, then 
\begin{equation*}
 \begin{split}
  u(t)=-\lambda \int_{t_1}^{t} \kappa(t-s) u(s)\,ds+1
-\lambda \int_{0}^{t_1} \kappa(t-s) u_1(s)\,ds:=A_2 u
 \end{split}
\end{equation*}
for each $t\in (t_1,2t_1)$.
Given $\phi_1, \phi_2\in L^1(t_1,2t_1)$, it holds 
\begin{equation*}
 \begin{split}
    \|A_2(\phi_1)-A_2(\phi_2)\|_{L^1(t_1,2t_1)}
    &\le \lambda \int_{t_1}^{2t_1} \int_{t_1}^{t} 
    |\kappa(t-s)|\cdot |\phi_1(s)-\phi_2(s)| ds dt\\
    &=\lambda \int_{t_1}^{2t_1} |\phi_1(s)-\phi_2(s)| 
    \int_{s}^{2t_1} |\kappa(t-s)| {\rm d} t {\rm d} s\\
    &\le \lambda \int_{t_1}^{2t_1} |\phi_1(s)-\phi_2(s)| 
    \cdot \|\kappa\|_{L^1(0,t_1)} {\rm d} s\\
    &=\lambda \|\kappa\|_{L^1(0,t_1)} \cdot
    \|\phi_1-\phi_2\|_{L^1(t_1,2t_1)}.
 \end{split}
\end{equation*}
Hence, $A_2$ is also a contraction map on $L^1(t_1,2t_1)$, 
which yields and shows that there exists a unique
$u_{2}(t)\in L^1(t_1,2t_1)$ such that $u_{2}=A_2 u_{2}$.

Repeating this argument yields that there exists a unique solution 
$u\in L^1(0,T)$ of the distributed ODE \eqref{eqn:dODE}, which completes 
the proof.
\end{proof}

\begin{lemma}\label{lem:u_is_cm}
$u(t)\in C^{\infty}(0,T)$ is completely monotone, which gives 
 $0\le u(t)\le 1$ on $[0,T]$.
\end{lemma}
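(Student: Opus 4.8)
The plan is to pass to the Laplace transform, identify the transform of the solution explicitly, and recognize it as a Stieltjes function; complete monotonicity then follows from Bernstein's theorem. Writing $\widehat{u}(z)$ for the Laplace transform of the solution $u$ of \eqref{eqn:dODE} and recalling from \eqref{eqn:dist_frac_der} that $\Dmu u = \eta * u'$ is a convolution, the convolution rule together with \eqref{eqn:Phi} gives
\begin{equation*}
\widehat{\Dmu u}(z) = \frac{\Phi(z)}{z}\bigl(z\widehat{u}(z) - u(0)\bigr) = \frac{\Phi(z)}{z}\bigl(z\widehat{u}(z) - 1\bigr).
\end{equation*}
Substituting into \eqref{eqn:dODE} and solving the resulting algebraic relation yields
\begin{equation*}
\widehat{u}(z) = \frac{\Phi(z)}{z\,(\Phi(z)+\lambda)} = \frac{1}{z}\cdot\frac{\Phi(z)}{\Phi(z)+\lambda}.
\end{equation*}
Since $\Phi$ maps the cut plane $\mathbb{C}\setminus(-\infty,0]$ into itself with values off $(-\infty,0]$, the denominator $\Phi(z)+\lambda$ never vanishes for $\lambda>0$, so $\widehat{u}$ is holomorphic on the right half-plane and the inversion is legitimate.

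Next I would establish the structural properties of $\Phi$. For each fixed $\alpha\in(0,1]$ the map $z\mapsto z^\alpha$ is a complete Bernstein function, and since $\mu(\alpha)\ge0$ by Assumption~\ref{mu_assumption}, the mixture $\Phi(z)=\int_0^1\mu(\alpha)z^\alpha\,d\alpha$ in \eqref{eqn:Phi} is again a complete Bernstein function (the class is closed under nonnegative integral combinations). The elementary function $w\mapsto w/(w+\lambda)$ is itself a complete Bernstein function, and because this class is stable under composition, the function
\begin{equation*}
g(z) := \frac{\Phi(z)}{\Phi(z)+\lambda}
\end{equation*}
is a complete Bernstein function as well.

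I would then invoke the standard correspondence between complete Bernstein and Stieltjes functions: $g$ is a complete Bernstein function if and only if $g(z)/z$ is a Stieltjes function. Consequently $\widehat{u}(z)=g(z)/z$ admits a representation $\widehat{u}(z)=\int_{[0,\infty)}(z+s)^{-1}\,\rho(ds)$ for a nonnegative measure $\rho$, and inverting term by term shows $u(t)=\int_{[0,\infty)}e^{-st}\,\rho(ds)$. By Bernstein's theorem $u$ is completely monotone, hence real-analytic (in particular $u\in C^\infty(0,T)$) and nonnegative. Complete monotonicity also forces $u'=-(\text{completely monotone})\le0$, so $u$ is nonincreasing, while an Abelian limit gives $u(0^+)=\lim_{z\to\infty}z\widehat{u}(z)=\lim_{z\to\infty}\Phi(z)/(\Phi(z)+\lambda)=1$, using $\Phi(z)\to\infty$. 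Therefore $0\le u(t)\le u(0^+)=1$ on $[0,T]$, which is the assertion.

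The main obstacle is the verification that $\Phi$ is a complete Bernstein function and the correct invocation of the Stieltjes correspondence, since these rely on analytic facts about $\Phi$ on and near the negative real axis rather than on its restriction to $(0,\infty)$. If one prefers to avoid this machinery, the alternative---followed in \cite{Kochubei:2008}---is to invert $\widehat{u}$ directly by a contour integral around the cut $(-\infty,0]$ and to prove nonnegativity of the resulting density by careful sign estimates of $\re\Phi$ and $\im\Phi$ on the rays $\arg z=\pm\pi$; the same estimates underlie the bound \eqref{eqn:kappa_inequality} in Lemma~\ref{lem:kappa}. Either route needs only $\mu\ge0$ and $\mu(1)\ne0$, and in particular does not require the stronger hypotheses under which $\kappa$ itself is completely monotone.
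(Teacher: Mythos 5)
Your argument is correct, but it is a genuinely different route from the paper's: the paper disposes of this lemma in one line by citing it as a special case of Theorem~2.3 of Kochubei's work \cite{Kochubei:2008}, whereas you give a self-contained proof through the complete-Bernstein/Stieltjes machinery. Your computation of the transform is right: from $\Dmu u=\eta*u'$ and \eqref{eqn:Phi} one gets $\widehat{u}(z)=\Phi(z)\big/\bigl(z(\Phi(z)+\lambda)\bigr)$, and the chain ``$z^\alpha$ is complete Bernstein $\Rightarrow$ the nonnegative mixture $\Phi$ is complete Bernstein $\Rightarrow$ the composition $\Phi/(\Phi+\lambda)$ is complete Bernstein $\Rightarrow$ $\widehat{u}$ is Stieltjes $\Rightarrow$ $u$ is completely monotone'' is sound, with the Abelian limit $u(0^+)=\lim_{z\to\infty}z\widehat{u}(z)=1$ (valid since $\mu(1)\ne0$ forces $\Phi(z)\to\infty$) giving the bound $0\le u\le1$. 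What your approach buys is transparency: every step reduces to standard closure properties of the complete Bernstein class (as in Schilling--Song--Vondra\v{c}ek), and you correctly observe that only $\mu\ge0$ and $\mu(1)\ne0$ are needed, not the extra hypotheses that make $\kappa$ itself completely monotone. What the paper's citation buys is brevity, at the cost of hiding exactly the contour-integral estimates around the cut that you mention as the alternative. Two small points to tighten: when you invoke the Stieltjes representation you should note that the constant term in the general Stieltjes form vanishes because $\widehat{u}(z)\to0$ as $z\to\infty$, and the finiteness of the representing measure (needed to evaluate $u(0^+)$ by monotone convergence) follows from that same Abelian limit; neither is a gap, just a sentence each.
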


\begin{proof}
This lemma is a special case of \cite[Theorem 2.3]{Kochubei:2008}.
\end{proof}

\section{Existence, uniqueness and regularity}

\subsection{Existence and uniqueness of weak solution for DDE \eqref{eqn:model_pde}
\label{sect:model_general}}

\par We state the definition of the weak solution as 
\begin{definition}
  $u(x,t)$ is a weak solution to 
   DDE \eqref{eqn:model_pde} in $L^2(\Omega)$ if 
   $u(\cdot,t)\in H_0^1(\Omega)$ for $t\in(0,T)$ and 
   for any $\psi(x)\in H^2(\Omega)\cap H_0^1(\Omega)$, 
   \begin{equation*}
   \begin{split}
    &\langle\Dmu u(x,t),\psi(x)\rangle-\langle\mathcal{L} u(x,t;a),\psi(x)\rangle=\langle f(x,t),\psi(x)\rangle,
   \ t\in(0,T);\\
   &\langle u(x,0),\psi(x)\rangle = \langle u_0(x),\psi(x)\rangle.
   \end{split}
   \end{equation*}
\end{definition}

\par Then Lemma \ref{lem:existence_uniqueness_u_n} gives the 
following corollary. 
\begin{corollary}\label{cor:existence_uniqueness}
 There exists a unique weak solution $u^*(x,t)$ of 
 DDE \eqref{eqn:model_pde} and the representation of $u^*(x,t)$ is 
\begin{equation}\label{eqn:weak solution}
\begin{aligned}
 u^*(x,t)=&\sum_{n=1}^{\infty} \Big[\langle u_0,\psi_n\rangle u_n(t)
 +\langle f(\cdot,0),\psi_n\rangle\Imu u_n(t)\\
 &+\int_0^t \langle\frac{\partial}{\partial t}
 f(\cdot,\tau),\psi_n\rangle \Imu u_n(t-\tau){\rm d}\tau\Big]\psi_n(x),
 \end{aligned}  
\end{equation}
where $u_n(t)$ is the unique solution of the distributed ODE \eqref{eqn:dODE} 
with $\lambda=\lambda_n$.
\end{corollary}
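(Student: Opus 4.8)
The plan is to diagonalize the problem against the eigensystem $\{\psi_n,\lambda_n\}$ of $-\mathcal{L}$, reducing \eqref{eqn:model_pde} to a countable family of scalar forced distributed ODEs, and then to sum the resulting series. Setting $w_n(t)=\langle u(\cdot,t),\psi_n\rangle$ and $f_n(t)=\langle f(\cdot,t),\psi_n\rangle$ and testing the weak formulation with $\psi=\psi_n$, the self-adjointness of $\mathcal{L}$ together with $-\mathcal{L}\psi_n=\lambda_n\psi_n$ converts \eqref{eqn:model_pde} into
\begin{equation*}
\Dmu w_n(t)+\lambda_n w_n(t)=f_n(t),\qquad w_n(0)=\langle u_0,\psi_n\rangle .
\end{equation*}
Everything then reduces to solving this scalar equation and to showing that $u^*(x,t):=\sum_n w_n(t)\psi_n(x)$ is a well-defined weak solution and the only one.

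To solve the scalar problem and expose the three terms in \eqref{eqn:weak solution} I would use the Laplace transform in $t$, writing $\widehat{g}(z)$ for the transform of $g$. From \eqref{eqn:Phi} one has $\widehat{\eta}(z)=\Phi(z)/z$, hence $\widehat{\Dmu g}(z)=\Phi(z)\widehat{g}(z)-\Phi(z)g(0)/z$, and transforming the scalar equation gives
\begin{equation*}
\widehat{w_n}(z)=\frac{\Phi(z)}{z\big(\Phi(z)+\lambda_n\big)}\langle u_0,\psi_n\rangle+\frac{\widehat{f_n}(z)}{\Phi(z)+\lambda_n}.
\end{equation*}
Transforming \eqref{eqn:dODE} with $\lambda=\lambda_n$ identifies $\widehat{u_n}(z)=\Phi(z)/\big(z(\Phi(z)+\lambda_n)\big)$, and since $\Imu$ is convolution against $\kappa$ with $\widehat{\kappa}(z)=1/\Phi(z)$ (Lemma~\ref{lem:kappa}), also $\widehat{\Imu u_n}(z)=1/\big(z(\Phi(z)+\lambda_n)\big)$. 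Splitting $\widehat{f_n}=\big(f_n(0)+\widehat{f_n'}\big)/z$, where $f_n'(\tau)=\langle\partial_t f(\cdot,\tau),\psi_n\rangle$, and matching transforms term by term recovers exactly the initial-data term $\langle u_0,\psi_n\rangle u_n(t)$, the term $f_n(0)\,\Imu u_n(t)$, and the convolution $\int_0^t f_n'(\tau)\,\Imu u_n(t-\tau)\,d\tau$ of \eqref{eqn:weak solution}. The same identity can be checked without transforms by applying $\Imu$ and using Lemma~\ref{lem:kappa}(1); uniqueness at the scalar level follows from the contraction argument of Lemma~\ref{lem:existence_uniqueness_u_n} adapted to the forced equation.

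The summation and regularity step rests on two uniform estimates. Lemma~\ref{lem:u_is_cm} gives $0\le u_n(t)\le 1$, while the integral identity $u_n=1-\lambda_n\Imu u_n$ from Lemma~\ref{lem:existence_uniqueness_u_n} yields the crucial bound $0\le\lambda_n\Imu u_n(t)=1-u_n(t)\le 1$ on $[0,T]$. Measuring convergence in the graph norm of $\mathcal{L}$, i.e. through $\sum_n\lambda_n^2|\langle\,\cdot\,,\psi_n\rangle|^2$, the bound $|u_n|\le1$ and $u_0\in H^2(\Omega)$ make the first series converge in $C([0,T];H^2(\Omega)\cap H^1_0(\Omega))$, while the factor $\lambda_n\Imu u_n\le1$ supplies the extra power of $\lambda_n^{-1}$ needed for the two forcing series to converge in the same space. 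These bounds, together with $u_n\in C^\infty(0,T)$, also legitimize applying $-\mathcal{L}$ and $\Dmu$ term by term: using $\Dmu u_n=-\lambda_n u_n$ and $\Dmu\Imu u_n=u_n$ from Lemma~\ref{lem:kappa}(1), the operator $\Dmu-\mathcal{L}$ acting on the $n$-th summand produces $f_n(t)$, so $u^*$ solves the weak formulation and attains $u_0$. Uniqueness of the full weak solution then follows because any weak solution has modes satisfying the scalar forced problem, whose solution is unique.

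I expect the genuine difficulty to lie in this last convergence-and-differentiation step near $t=0$, where the kernel $\kappa$ carries the logarithmic singularity \eqref{eqn:kappa_inequality} and where term-by-term application of $\Dmu$ must be justified in $L^2(\Omega)$ uniformly on $[0,T]$. The estimates $0\le u_n\le1$ and $0\le\lambda_n\Imu u_n\le1$ are the workhorses that control the dependence on $\lambda_n$; the remaining care is purely in the time regularity required to apply Lemma~\ref{lem:kappa}(1) mode by mode and to pass the operators inside the sums.
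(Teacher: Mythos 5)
Your proposal is correct and follows essentially the same route as the paper: project onto the eigenbasis of $-\mathcal{L}$, reduce to the scalar forced distributed ODE, identify the three terms of \eqref{eqn:weak solution} via the resolvent factor $\Phi(z)/\bigl(z(\Phi(z)+\lambda_n)\bigr)$, and invoke the contraction argument of Lemma~\ref{lem:existence_uniqueness_u_n} for uniqueness mode by mode; the paper compresses all of this into ``completeness plus direct calculation'' and defers the convergence estimates to Lemmas~\ref{lem:regularity_u} and \ref{lem:regularity_Dmu}. The one genuinely nice extra in your write-up is the identity $\lambda_n\Imu u_n=1-u_n\in[0,1]$, which is sharper than the bound $|\Imu u_n|\le\|\kappa\|_{L^1(0,T)}$ the paper uses later and would let the forcing series converge under weaker spatial regularity on $f$.
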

\begin{proof}
  \par Completeness of $\{\psi_n(x):\N+\}$ in $L^2(\Omega)$ and 
  direct calculation show that the representation 
  \eqref{eqn:weak solution} is a weak solution of DDE \eqref{eqn:model_pde}; 
  while the uniqueness of $u^*$ follows from Lemma \ref{lem:existence_uniqueness_u_n}.
\end{proof}

\subsection{Regularity}
\par The next two lemmas concern the regularity of 
$u^*$ and $\Dmu u^*$.
\begin{lemma}\label{lem:regularity_u}
 \begin{equation*}
  \|u^*(x,t)\|_{C([0,T];H^2(\Omega))}
\le C\big(\|u_0\|_{H^2(\Omega)}+\|f(\cdot, 0)\|_{H^2(\Omega)}
+T^{1/2}  |f|_{H^1([0,T];H^2(\Omega))}\big)
\end{equation*}
where $C>0$ depends on $\mu$, $\mathcal{L}$ and $\Omega$, 
and $|f|_{H^1([0,T];H^2(\Omega))}=
\|\frac{\partial f}{\partial t}\|_{L^2([0,T];H^2(\Omega))}$. 
\end{lemma}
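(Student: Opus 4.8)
The plan is to derive the $H^2$ bound term-by-term from the series representation \eqref{eqn:weak solution}, controlling each summand by exploiting the orthonormality of $\{\psi_n\}$ together with the uniform bound $0\le u_n(t)\le 1$ from Lemma~\ref{lem:u_is_cm} and an $L^2(0,T)$ bound on $\Imu u_n$. The key observation is that $\|v\|_{H^2(\Omega)}^2$ is comparable to $\sum_n \lambda_n^2 |\langle v,\psi_n\rangle|^2$ for $v\in H^2(\Omega)\cap H_0^1(\Omega)$, since $-\mathcal{L}\psi_n=\lambda_n\psi_n$ and $\|\mathcal{L}v\|_{L^2(\Omega)}$ controls the full $H^2$ norm by elliptic regularity. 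Thus, writing $a_n:=\langle u_0,\psi_n\rangle$, $b_n:=\langle f(\cdot,0),\psi_n\rangle$, and $c_n(\tau):=\langle \partial_t f(\cdot,\tau),\psi_n\rangle$, the $n$-th Fourier coefficient of $u^*(\cdot,t)$ is $a_n u_n(t)+b_n\,\Imu u_n(t)+\int_0^t c_n(\tau)\,\Imu u_n(t-\tau)\,d\tau$, and the whole problem reduces to bounding $\lambda_n$ times each of these three pieces in $L^2(0,T)$ (or $C[0,T]$), uniformly in $t$, and summing in $n$.

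First I would handle the initial-data term: since $0\le u_n(t)\le 1$, we have $\sum_n \lambda_n^2 a_n^2 u_n(t)^2 \le \sum_n \lambda_n^2 a_n^2 \asymp \|u_0\|_{H^2(\Omega)}^2$, giving the first term on the right. The two source terms both involve $\Imu u_n$, so the crux is a bound of the form $\lambda_n \|\Imu u_n\|_{C[0,T]}\le C$ uniform in $n$. To see this, apply $\Imu$ to the ODE \eqref{eqn:dODE}: by Lemma~\ref{lem:kappa}(1), $\Imu\Dmu u_n = u_n-1$, so $\Imu(-\lambda_n u_n)=u_n-1$, i.e.\ $\lambda_n\,\Imu u_n(t)=1-u_n(t)$, which lies in $[0,1]$ pointwise by Lemma~\ref{lem:u_is_cm}. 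This identity is the engine of the whole estimate: it converts each factor of $\lambda_n$ against $\Imu u_n$ into the bounded quantity $1-u_n$ at no cost. Consequently $\lambda_n\,\Imu u_n(t-\tau)=1-u_n(t-\tau)\in[0,1]$ as well.

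With that identity in hand the source terms are immediate. For the stationary part, $\sum_n \lambda_n^2 b_n^2 \,(\Imu u_n(t))^2 = \sum_n b_n^2\,(1-u_n(t))^2 \le \sum_n b_n^2 \asymp \|f(\cdot,0)\|_{H^2(\Omega)}^2$. For the convolution part, I would use Cauchy--Schwarz in $\tau$: writing $\lambda_n\int_0^t c_n(\tau)\Imu u_n(t-\tau)\,d\tau=\int_0^t c_n(\tau)\,(1-u_n(t-\tau))\,d\tau$, its square is bounded by $t\int_0^t c_n(\tau)^2\,d\tau\le T\int_0^T c_n(\tau)^2\,d\tau$ since $|1-u_n|\le 1$; summing in $n$ and using $\sum_n\int_0^T c_n(\tau)^2\lambda_n^{-2}\dots$—more precisely, keeping the $\lambda_n^2$ weight on $c_n$—yields $T\sum_n \lambda_n^2\int_0^T c_n(\tau)^2\,d\tau = T\,\|\partial_t f\|_{L^2([0,T];H^2(\Omega))}^2$, which after taking square roots produces the factor $T^{1/2}|f|_{H^1([0,T];H^2(\Omega))}$. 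Summing the three contributions with Minkowski's inequality in $\ell^2$ and taking the supremum over $t\in[0,T]$ gives the stated bound; the continuity in $t$ (to land in $C([0,T];H^2(\Omega))$ rather than merely $L^\infty$) follows from the continuity of each $u_n$ together with uniform convergence of the series, which the same geometric estimates guarantee. The main obstacle is organizational rather than deep: one must be careful that the elliptic-regularity equivalence $\|v\|_{H^2}\asymp(\sum_n\lambda_n^2|\langle v,\psi_n\rangle|^2)^{1/2}$ genuinely holds on $H^2(\Omega)\cap H_0^1(\Omega)$ (this is where the smooth $C^2$ boundary and self-adjointness of $\mathcal{L}$ enter), and that the interchange of summation with the operator $\Imu$ and with the $\tau$-integral is justified—both of which are routine once the uniform bound $\lambda_n\Imu u_n=1-u_n$ is established.
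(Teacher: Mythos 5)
Your proposal is correct, and it follows the paper's overall architecture (the three-term split of the series \eqref{eqn:weak solution}, the equivalence $\|v\|_{H^2(\Omega)}\asymp\bigl(\sum_n\lambda_n^2|\langle v,\psi_n\rangle|^2\bigr)^{1/2}$ via elliptic regularity, and the bound $0\le u_n\le 1$ for the initial-data term), but it handles the two source terms by a genuinely different device. The paper estimates $|\Imu u_n(t)|\le\int_0^t|\kappa(\tau)|\,d\tau\le\|\kappa\|_{L^1(0,T)}$ using the logarithmic bound of Lemma~\ref{lem:kappa}(2) and $|u_n|\le1$, and keeps the full $\lambda_n^2$ weight on the Fourier coefficients of $f(\cdot,0)$ and $\partial_t f$. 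You instead apply $\Imu$ to the ODE \eqref{eqn:dODE} to get $\lambda_n\,\Imu u_n(t)=1-u_n(t)\in[0,1]$ (this is exactly the integral-equation form $u_n=1-\lambda_n\Imu u_n$ already used in the proof of Lemma~\ref{lem:existence_uniqueness_u_n}, so it is available), which converts each $\lambda_n\Imu u_n$ into a quantity bounded by $1$. This buys you something: it avoids any appeal to $\|\kappa\|_{L^1}$ in this lemma and actually yields the sharper bounds $\sum_n b_n^2$ and $T\sum_n\int_0^T c_n^2$, i.e.\ only $\|f(\cdot,0)\|_{L^2(\Omega)}$ and $\|\partial_t f\|_{L^2([0,T];L^2(\Omega))}$ rather than the $H^2(\Omega)$ norms; the stated estimate then follows a fortiori. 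One small slip: after using the identity you no longer have the $\lambda_n^2$ weight, so the convolution term sums to $T\|\partial_t f\|^2_{L^2([0,T];L^2(\Omega))}$, not \emph{equal} to $T|f|^2_{H^1([0,T];H^2(\Omega))}$ as your parenthetical asserts; the needed inequality $\le C\,T|f|^2_{H^1([0,T];H^2(\Omega))}$ still holds (with $C$ depending on $\lambda_1$, hence on $\mathcal{L}$ and $\Omega$), so the conclusion is unaffected.
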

\begin{proof}
 \par Fix $t\in(0,T)$,
 \begin{equation*}
 \begin{aligned}
  \|u^*(x,t)\|_{H^2(\Omega)}
 \le &\ \big\|\sum_{n=1}^{\infty}\langle u_0,\psi_n\rangle u_n(t)\psi_n(x)\big\|_{H^2(\Omega)}
 &&:=I_1\\
  &+\big\|\sum_{n=1}^{\infty}\langle f(\cdot,0),\psi_n\rangle\Imu u_n(t)\psi_n(x)\big\|_{H^2(\Omega)}
  &&:=I_2\\
 &+\big\|\sum_{n=1}^{\infty}\int_0^t \langle\frac{\partial}{\partial t}
 f(\cdot,\tau),\psi_n\rangle \Imu u_n(t-\tau){\rm d}\tau\ \psi_n(x)\big\|_{H^2(\Omega)}
 &&:=I_3. 
 \end{aligned}
\end{equation*}
We estimate each of $I_1$, $I_2$, and $I_3$ in turn using Lemmas~\ref{lem:kappa} and \ref{lem:u_is_cm} 
where in each case $C>0$ is a generic constant that depends only on
$\mu$, $\mathcal{L}$ and $\Omega$.
\begin{equation*}\label{eqn:I_1}
 \begin{aligned}
  I_1^2&=\big\|\sum_{n=1}^{\infty}\langle u_0,\psi_n\rangle u_n(t)\psi_n(x)\big\|_{H^2(\Omega)}^2
  \le C\big\|\mathcal{L}\big(\sum_{n=1}^{\infty}\langle u_0,\psi_n\rangle 
  u_n(t)\psi_n(x)\big)\big\|_{L^2(\Omega)}^2\\
 &=C\big\|\sum_{n=1}^{\infty}\lambda_n\langle u_0,\psi_n\rangle u_n(t)\psi_n(x)\big\|_{L^2(\Omega)}^2 
 =C\sum_{n=1}^{\infty} \lambda_n^2\langle u_0,\psi_n\rangle^2 u^2_n(t)\\
 &\le C\sum_{n=1}^{\infty} \lambda_n^2\langle u_0,\psi_n\rangle^2 
 =C\big\|\mathcal{L} u_0\big\|_{L^2(\Omega)}^2\le C\|u_0\|_{H^2(\Omega)}^2.
 \end{aligned}
 \end{equation*}
 \begin{equation*}
 \begin{aligned}
  I_2^2&=\big\|\sum_{n=1}^{\infty}\langle f(\cdot,0),\psi_n\rangle
  \Imu u_n(t)\psi_n(x)\big\|_{H^2(\Omega)}^2
  \le C\sum_{n=1}^{\infty}\lambda_n^2\langle f(\cdot,0),\psi_n\rangle^2 
  (\Imu u_n(t))^2\\
  &\le C\sum_{n=1}^{\infty}\lambda_n^2\langle f(\cdot,0),\psi_n\rangle^2 
  \Bigl(\int_0^t|\kappa(\tau)|
  \cdot|u_n(t-\tau)|{\rm d}\tau\Bigr)^2 \\
  &\le C \sum_{n=1}^{\infty}\lambda_n^2\langle f(\cdot,0),\psi_n\rangle^2 
  \Bigl(\int_0^t |\kappa(\tau)|{\rm d}\tau\Bigr)^2\\
  &\le C\sum_{n=1}^{\infty}\lambda_n^2\langle f(\cdot,0),\psi_n\rangle^2 \|\kappa\|_{L^1(0,T)}^2
  \le C\|\kappa\|_{L^1(0,T)}^2 \|f(\cdot, 0)\|_{H^2(\Omega)}^2.
 \end{aligned}
\end{equation*}
\begin{equation*}\label{eqn:I_3}
 \begin{aligned}
 \quad I_3^2&=\big\|\sum_{n=1}^{\infty}\int_0^t \langle\frac{\partial}{\partial t}
 f(\cdot,\tau),\psi_n\rangle \Imu u_n(t-\tau){\rm d}\tau\ \psi_n(x)\big\|_{H^2(\Omega)}^2\\
 &\le C\sum_{n=1}^{\infty}\left[\int_0^t \lambda_n \langle\frac{\partial}{\partial t}
 f(\cdot,\tau),\psi_n\rangle \Imu u_n(t-\tau){\rm d}\tau\right]^2\\
 &\le C\sum_{n=1}^{\infty}\left[\int_0^t \lambda_n|\langle\frac{\partial}{\partial t}
 f(\cdot,\tau),\psi_n\rangle|\cdot  |\Imu u_n(t-\tau)|{\rm d}\tau\right]^2\\
 &\le C\|\kappa\|_{L^1(0,T)}^2 \sum_{n=1}^{\infty}
 \int_0^t \lambda_n^2|\langle\frac{\partial}{\partial t} f(\cdot,\tau),\psi_n\rangle|^2 
 {\rm d}\tau \cdot \int_0^t 1^2 {\rm d}\tau\\
 &\le C T\|\kappa\|_{L^1(0,T)}^2  \int_0^T \sum_{n=1}^{\infty} 
 \lambda_n^2|\langle\frac{\partial}{\partial t} f(\cdot,\tau),\psi_n\rangle|^2{\rm d}\tau
 \le CT\|\kappa\|_{L^1(0,T)}^2  \int_0^T 
 \big\|\frac{\partial}{\partial t} f(\cdot,\tau)\big\|_{H^2(\Omega)}^2{\rm d}\tau\\
 &=CT\|\kappa\|_{L^1(0,T)}^2 |f|^2_{H^1([0,T];H^2(\Omega))}.
   \end{aligned}
\end{equation*}
 Hence, 
\begin{equation*}
 \begin{split}
  \|u^*(x,t)\|_{C([0,T];H^2(\Omega))}
&\le C\|u_0\|_{H^2( \Omega)}+C\|\kappa\|_{L^1(0,T)} \|f(\cdot, 0)\|_{H^2(\Omega)}\\
&\qquad+CT^{1/2}\|\kappa\|_{L^1(0,T)} |f|_{H^1([0,T];H^2(\Omega))}\\
&\le C\big(\|u_0\|_{H^2(\Omega)}+\|f(\cdot, 0)\|_{H^2(\Omega)}
+T^{1/2} |f|_{H^1([0,T];H^2(\Omega))}\big).
 \end{split}
\end{equation*}
Due to the fact that $\kappa$ is determined by $\mu$,
the constant $C$ above only depends on $\mu$, $\mathcal{L}$ and $\Omega$.
\end{proof}

\begin{lemma}\label{lem:regularity_Dmu}
 \begin{equation*}
 \begin{split}
  \|\Dmu u^*\|_{C([0,T];L^2(\Omega))}
\le C\left(\|u_0\|_{H^2(\Omega)}+T^{1/2}|f|_{H^1([0,T];H^2(\Omega))}
+\|f\|_{C([0,T];H^2(\Omega))}\right),
 \end{split}
\end{equation*}
where $C>0$ only depends on $\mu$, $\mathcal{L}$ and $\Omega$.
\end{lemma}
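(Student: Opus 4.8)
The plan is to exploit the fact that $u^*$ solves the DDE, so that $\Dmu u^*$ can be rewritten in terms of quantities already controlled by Lemma~\ref{lem:regularity_u}. Writing $u_n^*(t)=\langle u^*(\cdot,t),\psi_n\rangle$ for the Fourier coefficients of the solution and $f_n(t)=\langle f(\cdot,t),\psi_n\rangle$, the central claim I would establish first is the modewise identity
\begin{equation*}
\Dmu u_n^*(t)=-\lambda_n u_n^*(t)+f_n(t),\qquad t\in(0,T).
\end{equation*}
The quickest route is the weak formulation: testing the weak equation against $\psi_n$ and using self-adjointness of $\mathcal{L}$ together with $\mathcal{L}\psi_n=-\lambda_n\psi_n$ gives $\langle\mathcal{L} u^*,\psi_n\rangle=-\lambda_n u_n^*$, and the identity follows at once. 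Alternatively, and more in the spirit of the representation \eqref{eqn:weak solution}, I would verify it by applying $\Dmu$ term by term, using $\Dmu u_n=-\lambda_n u_n$ from \eqref{eqn:dODE}, the inversion identity $\Dmu\Imu u_n=u_n$ from Lemma~\ref{lem:kappa}(1), and, for the Duhamel term, the convolution rule $\Dmu\big(g*\Imu u_n\big)=g*\big(\Dmu\Imu u_n\big)=g*u_n$ (valid because $\Imu u_n$ vanishes at $t=0$), with $g(\tau)=\langle\partial_t f(\cdot,\tau),\psi_n\rangle$. Collecting the pieces and substituting the relation $\lambda_n\Imu u_n=1-u_n$, itself obtained by applying $\Imu$ to \eqref{eqn:dODE}, reproduces the same right-hand side.

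Given the modewise identity, the estimate is essentially Parseval plus Lemma~\ref{lem:regularity_u}. Since $\{\psi_n\}$ is orthonormal and complete in $L^2(\Omega)$, for fixed $t$
\begin{equation*}
\|\Dmu u^*(\cdot,t)\|_{L^2(\Omega)}^2=\sum_{n=1}^\infty\big(\Dmu u_n^*(t)\big)^2=\sum_{n=1}^\infty\big(-\lambda_n u_n^*(t)+f_n(t)\big)^2.
\end{equation*}
I would then use $(a+b)^2\le 2a^2+2b^2$ to split this into $2\sum_n\lambda_n^2(u_n^*)^2+2\sum_n f_n^2=2\|\mathcal{L} u^*(\cdot,t)\|_{L^2(\Omega)}^2+2\|f(\cdot,t)\|_{L^2(\Omega)}^2$. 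The second piece is dominated by $\|f\|_{C([0,T];H^2(\Omega))}^2$, and the first by $C\|u^*(\cdot,t)\|_{H^2(\Omega)}^2$ since $\mathcal{L}$ is a second-order operator with bounded coefficients; invoking Lemma~\ref{lem:regularity_u} and bounding $\|f(\cdot,0)\|_{H^2(\Omega)}\le\|f\|_{C([0,T];H^2(\Omega))}$ then yields the claimed inequality after taking the supremum over $t\in[0,T]$.

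The one step that genuinely needs care is the justification of the modewise identity together with the termwise application of $\Dmu$. If I proceed via the weak formulation this is immediate, but I must first know that $\Dmu u^*(\cdot,t)$ is a well-defined element of $L^2(\Omega)$ — precisely the convergence of $\sum_n(\Dmu u_n^*)^2$ that the estimate itself produces — so the argument should be organized so that finiteness of the series is \emph{concluded} from Lemma~\ref{lem:regularity_u} rather than assumed at the outset. If instead I use the direct convolution computation, the delicate points are interchanging $\Dmu$ with the infinite sum and the convolution manipulation for the Duhamel term; both are controlled by the complete monotonicity and uniform bound $0\le u_n\le 1$ from Lemma~\ref{lem:u_is_cm} and the $L^1$ bound on $\kappa$ from Lemma~\ref{lem:kappa}, which guarantee absolute convergence and legitimize the application of Fubini's theorem. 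Everything else is the routine Parseval bookkeeping already carried out in the proof of Lemma~\ref{lem:regularity_u}.
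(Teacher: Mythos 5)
Your proposal is correct and follows essentially the same route as the paper: the paper likewise uses the equation to write $\Dmu u^*$ as $\mathcal{L}u^*+f$ (expanded as the three eigenfunction series from \eqref{eqn:weak solution} plus $f$) and then reuses the $I_1$, $I_2$, $I_3$ estimates from the proof of Lemma~\ref{lem:regularity_u} in the $L^2(\Omega)$ norm. Your repackaging via the modewise identity $\Dmu u_n^*=-\lambda_n u_n^*+f_n$ and the bound $\|\mathcal{L}u^*\|_{L^2(\Omega)}\le C\|u^*\|_{H^2(\Omega)}$ is just a slight reorganization of the same computation, and your remark about concluding (rather than assuming) convergence of $\sum_n(\Dmu u_n^*)^2$ is the right way to order the argument.
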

\begin{proof}
\par For each $t\in(0,T)$,
 \begin{equation*}
  \begin{split}
   \Dmu u^*(x,t)&=-\sum_{n=1}^{\infty}\lambda_n\langle u_0,\psi_n\rangle u_n(t)\psi_n(x)
 -\sum_{n=1}^{\infty}\lambda_n\langle f(\cdot,0),\psi_n\rangle\Imu u_n(t)\psi_n(x)\\
 &\quad-\sum_{n=1}^{\infty}\lambda_n\int_0^t \langle\frac{\partial}{\partial t}
 f(\cdot,\tau),\psi_n\rangle \Imu u_n(t-\tau){\rm d}\tau\ \psi_n(x)
 +f(x,t),
  \end{split}
 \end{equation*}
which implies
\begin{equation*}
 \begin{aligned}
 \qquad\|\Dmu u^*\|_{L^2(\Omega)}& \le 
 \|\sum_{n=1}^{\infty}\lambda_n\langle u_0,\psi_n\rangle u_n(t)\psi_n(x)\|_{L^2(\Omega)}
  +\|\sum_{n=1}^{\infty}\lambda_n\langle f(\cdot,0),\psi_n\rangle\Imu u_n(t)\psi_n(x)\|_{L^2(\Omega)}\\
 &\quad+\|\sum_{n=1}^{\infty}\lambda_n\int_0^t \langle\frac{\partial}{\partial t}
 f(\cdot,\tau),\psi_n\rangle \Imu u_n(t-\tau){\rm d}\tau\ \psi_n(x)\|_{L^2(\Omega)}
  +\|f(\cdot,t)\|_{L^2(\Omega)}.
 \end{aligned}
\end{equation*}
Combining the estimates for $I_1$, $I_2$ and $I_3$ we obtain
\begin{equation*}
 \|\sum_{n=1}^{\infty}\lambda_n\langle u_0,\psi_n\rangle u_n(t)\psi_n(x)\|_{L^2(\Omega)}^2
 =\sum_{n=1}^{\infty} \lambda_n^2\langle u_0,\psi_n\rangle^2u_n^2(t)
 \le C\|u_0\|_{H^2(\Omega)}^2,
\end{equation*}
\begin{equation*}
\begin{split}
 \|\sum_{n=1}^{\infty}\lambda_n\langle f(\cdot,0),\psi_n\rangle
 \Imu u_n(t)\psi_n(x)\|_{L^2(\Omega)}^2
 &=\sum_{n=1}^{\infty}\lambda_n^2\langle f(\cdot,0),\psi_n\rangle^2 (\Imu u_n(t))^2\\
 &\le C\|\kappa\|_{L^1(0,T)}^2 \|f(\cdot, 0)\|_{H^2(\Omega)}^2\\
 &\le C\|\kappa\|_{L^1(0,T)}^2 \|f\|_{C([0,T];H^2(\Omega))}^2
 \end{split} 
\end{equation*}
and 
\begin{equation*}
\begin{split}
&\|\sum_{n=1}^{\infty}\lambda_n\int_0^t\langle\frac{\partial}{\partial t}
 f(\cdot,\tau),\psi_n\rangle \Imu u_n(t-\tau){\rm d}\tau\ \psi_n(x)\|_{L^2(\Omega)}^2\\
=&\sum_{n=1}^{\infty}\left[\int_0^t \lambda_n\langle\frac{\partial}{\partial t}
 f(\cdot,\tau),\psi_n\rangle \Imu u_n(t-\tau){\rm d}\tau\right]^2
 \le CT\|\kappa\|_{L^1(0,T)}^2 |f|^2_{H^1([0,T];H^2(\Omega))}.
 \end{split} 
\end{equation*}
Therefore, 
\begin{equation*}
 \begin{split}
  \|\Dmu u^*\|_{C([0,T];L^2(\Omega))}
\le C\left(\|u_0\|_{H^2(\Omega)}+T^{1/2}|f|_{H^1([0,T];H^2(\Omega))}
+\|f\|_{C([0,T];H^2(\Omega))}\right),
 \end{split}
\end{equation*}
where $C$ is dependent only on $\mu$, $\mathcal{L}$ and $\Omega$.
\end{proof}

The main theorem of this section follows from
Corollary~\ref{cor:existence_uniqueness}, 
Lemmas~\ref{lem:regularity_u} and \ref{lem:regularity_Dmu}. 
\begin{theorem}[Main theorem for the direct problem]\label{main}
 There exists a unique weak solution $u^*(x,t)$ in $L^2(\Omega)$ of the 
 DDE~\eqref{eqn:model_pde} with the representation \eqref{eqn:weak solution} 
 and the following regularity estimate
 \begin{equation*}
  \begin{aligned}
   \quad\|u^*\|_{C([0,T];H^2(\Omega))} &+ \|\Dmu u^*\|_{C([0,T];L^2(\Omega))}\\
 &\le C\Big(\|u_0\|_{H^2(\Omega)}+T^{1/2}|f|_{H^1([0,T];H^2(\Omega))}
+\|f\|_{C([0,T];H^2(\Omega))}\Big),
  \end{aligned}
 \end{equation*}
where $C>0$ depends only on $\mu$, $\mathcal{L}$ and $\Omega$.
\end{theorem}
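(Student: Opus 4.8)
The plan is to assemble the theorem directly from the three results already established in this section; no new analysis is needed beyond combining them and reconciling the right-hand sides of the two regularity estimates. First I would invoke Corollary~\ref{cor:existence_uniqueness}, which already provides the existence of a unique weak solution $u^*(x,t)$ in $L^2(\Omega)$ together with the explicit eigenfunction representation \eqref{eqn:weak solution}. This disposes of the existence, uniqueness and representation assertions outright, so the only remaining task is to produce the single combined regularity bound.

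Next I would simply add the two estimates furnished by Lemma~\ref{lem:regularity_u} and Lemma~\ref{lem:regularity_Dmu}. Adding them term by term and taking $C$ to be the larger of the two constants yields
\begin{equation*}
\begin{split}
\|u^*\|_{C([0,T];H^2(\Omega))} + \|\Dmu u^*\|_{C([0,T];L^2(\Omega))}
&\le C\big(\|u_0\|_{H^2(\Omega)} + \|f(\cdot,0)\|_{H^2(\Omega)}\\
&\quad + T^{1/2}|f|_{H^1([0,T];H^2(\Omega))} + \|f\|_{C([0,T];H^2(\Omega))}\big).
\end{split}
\end{equation*}
The only point needing attention is that this bound still carries the term $\|f(\cdot,0)\|_{H^2(\Omega)}$ inherited from Lemma~\ref{lem:regularity_u}, which is absent from the stated inequality. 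I would absorb it through the trivial pointwise bound $\|f(\cdot,0)\|_{H^2(\Omega)} \le \|f\|_{C([0,T];H^2(\Omega))}$, since evaluation at $t=0$ is dominated by the supremum over $[0,T]$; this collapses the right-hand side to exactly the three terms claimed.

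Finally, I would record that in both lemmas the constants were shown to depend only on $\mu$, $\mathcal{L}$ and $\Omega$ --- the $\mu$-dependence entering solely through the fixed quantity $\|\kappa\|_{L^1(0,T)}$, with $\kappa$ determined by $\mu$ via Lemma~\ref{lem:kappa} --- so the consolidated constant inherits the same dependence. I expect no genuine obstacle at this stage: the substantive work, namely the repeated use of $\|\kappa\|_{L^1(0,T)}$ and the uniform bound $0\le u_n(t)\le 1$ from Lemma~\ref{lem:u_is_cm}, has already been carried out inside the two regularity lemmas, and the present theorem is purely their consolidation.
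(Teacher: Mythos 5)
Your proposal is correct and follows exactly the paper's route: the authors likewise obtain the theorem by combining Corollary~\ref{cor:existence_uniqueness} with Lemmas~\ref{lem:regularity_u} and \ref{lem:regularity_Dmu}, and your observation that $\|f(\cdot,0)\|_{H^2(\Omega)}\le\|f\|_{C([0,T];H^2(\Omega))}$ is the (implicit) step needed to reconcile the right-hand sides. Nothing further is required.
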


\section{Representation of the DDE solution for one spatial variable}
\label{sect:representation}

In this section, we will establish a representation result for the 
special case $\Omega=(0,1)$, $\mathcal{L}u=u_{xx}$ in  \eqref{eqn:model_pde}
 \begin{equation}\label{eqn:one_dim_model}
  \begin{cases}
   \Dmu u-u_{xx}=f(x,t),\ 0<x<1,\ 0<t<\infty;\\
   u(x,0)=u_0(x),\ 0<x<1;\\
   u(0,t)=g_0(t),\ 0\le t< \infty;\\
   u(1,t)=g_1(t),\ 0\le t< \infty,
  \end{cases}
 \end{equation}
 where $g_0,g_1\in L^2(0,\infty)$ and $f(x,\cdot)\in L^1(0,\infty)$ for each $x\in(0,1)$. 
  
We can obtain the fundamental solution  by Laplace and Fourier transforms.
First, we extend the finite domain to an infinite one and impose a homogeneous
right-hand side, i.e. we consider the following model
 \begin{equation*}
 \begin{cases}
 \Dmu u-u_{xx}=0,\ -\infty<x<\infty,\ 0<t<\infty;\\
 u(x,0)=u_0(x),\ -\infty<x<\infty.
 \end{cases}
 \end{equation*}
Next we take the Fourier transform $\FT$
with respect to $x$ and denote 
$(\FT u)(\xi,t)$ by $\tilde{u}(\xi,t)$,  
\begin{equation*}
 \Dmu \tilde{u}(\xi,t) +\xi^2\tilde{u}(\xi,t)=0.
\end{equation*}
Then by taking the Laplace transform $\L$ with respect to $t$ and denote 
$(\L \tilde{u})(\xi,z)$ by $\hat{\tilde{u}}(\xi,z)$,
we obtain
\begin{equation*}
 \int_0^1 \mu(\alpha)\left(z^\alpha\hat{\tilde{u}}(\xi,z)-
 z^{\alpha-1}\tilde{u}_0(\xi)\right)\,d\alpha
 +\xi^2\hat{\tilde{u}}(\xi,z)=0,
\end{equation*}
that is,
\begin{equation*}
 \hat{\tilde{u}}(\xi,z)=\frac{\Phi(z)/z}{\Phi(z)+\xi^2}\tilde{u}_0(\xi), 
\end{equation*}
where $\Phi(z)$ comes from \eqref{eqn:Phi}.

Then we have 
\begin{equation*}
 \begin{aligned}
  u(x,t)=\FT^{-1}\!\circ\L^{-1}( \hat{\tilde{u}}(\xi,z))
  &=\frac{1}{2\pi}\int_{-\infty}^{+\infty} e^{i x\xi}\frac{1}{2\pi i}
   \int_{\gamma-i\infty}^{\gamma+i\infty} e^{zt} 
   \frac{\Phi(z)/z}{\Phi(z)+\xi^2}\tilde{u}_0(\xi) \,dz \,d\xi\\
  &=\frac{1}{2\pi i}  \int_{\gamma-i\infty}^{\gamma+i\infty} e^{zt}
  \int_{-\infty}^{+\infty} \frac{1}{2\pi}e^{ i x\xi}\frac{\Phi(z)/z}{\Phi(z)+\xi^2}
  \tilde{u}_0(\xi) \,d\xi\,dz\\
  &=\frac{1}{2\pi i}  \int_{\gamma-i\infty}^{\gamma+i\infty} e^{zt}
  \big(\FT^{-1}(\frac{\Phi(z)/z}{\Phi(z)+\xi^2})*u_0\big)(x)\,dz,
 \end{aligned}
\end{equation*}
where the integral above is the usual Bromwich path, that is, a line in the complex plane parallel to the imaginary axis $z=\gamma + it$, $-\infty<t<\infty$, 
see \cite{WhittakerWatson:1962}. The last equality follows from the Fourier transform formula 
on convolutions and $\gamma$ can be an arbitrary positive number 
due to the fact that $z=0$ is a singular point of 
the function $\frac{\Phi(z)/z}{\Phi(z)+\xi^2}$. 
Throughout the remainder of this paper we will use $\gamma$ to denote a
strictly positive constant which is larger than $e^{1/\beta}$. The number 
$e^{1/\beta}$ will be seen in the proof of Lemma \ref{lem:phi}. 
We shall assume the angle of variation $z$ for the Laplace transforms is
from $-\pi$ to $\pi$, that is
$z\in\Lambda:=\{z\in \mathbb{C}:arg(z)\in (-\pi, \pi]\}$.

For $\Phi(z)$, we have the following result which will be central to
the rest of the paper.
It can be shown by using the Cauchy-Riemann equations in polar form. 
\begin{lemma}
 $\Phi(z)$ is analytic on $\mathbb{C}\setminus\!\{0\}$.
\end{lemma}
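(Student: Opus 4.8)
The plan is to prove analyticity \emph{locally}, since holomorphy is a local property: it suffices to show that every $z_0\in\mathbb{C}\setminus\{0\}$ admits an open disc $D\subset\mathbb{C}\setminus\{0\}$ on which $\Phi$ is holomorphic. I would flag at the outset the subtlety that makes the negative real axis delicate: the \emph{global} principal branch (with $\arg z\in(-\pi,\pi]$) has a nonzero jump $2i\int_0^1\mu(\alpha)r^\alpha\sin(\pi\alpha)\,d\alpha$ across the ray $(-\infty,0)$, so no single-valued function agreeing with the principal branch from both sides can be analytic there. The correct reading of the statement — and the reason the negative axis is nonetheless included in $\mathbb{C}\setminus\{0\}$ — is that $\Phi$ is holomorphic in a neighbourhood of each nonzero point once a branch of $z^\alpha=e^{\alpha\log z}$ is chosen \emph{locally}, the origin being the sole (branch-point) singularity.

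First I would fix $z_0\in\mathbb{C}\setminus\{0\}$ and choose a disc $D$ centred at $z_0$ with $0\notin\overline{D}$. Since $D$ is simply connected and omits the origin, there is a single-valued holomorphic branch of $\log$ on $D$; for $z_0$ on the negative real axis I take the cut of this branch rotated away from $D$ (for instance along the ray through $-z_0$), so that $\log$, and hence $z\mapsto z^\alpha=e^{\alpha\log z}$ for every fixed $\alpha\in[0,1]$, is holomorphic on all of $D$. This is precisely the step missing if one insists on the principal branch: on the negative axis one simply uses a locally admissible branch, after which the argument is identical to the right half-plane.

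Next I would verify holomorphy of $\Phi$ on $D$ by Morera's theorem combined with Fubini. For any closed contour $\Gamma\subset D$,
\begin{equation*}
\oint_\Gamma \Phi(z)\,dz=\oint_\Gamma\!\int_0^1\mu(\alpha)z^\alpha\,d\alpha\,dz
=\int_0^1\mu(\alpha)\oint_\Gamma z^\alpha\,dz\,d\alpha=0,
\end{equation*}
where the interchange is licensed because $(\alpha,z)\mapsto\mu(\alpha)z^\alpha$ is continuous, hence bounded, on the compact set $[0,1]\times\Gamma$, and the inner integral vanishes by Cauchy's theorem since $z\mapsto z^\alpha$ is holomorphic on $D$. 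Continuity of $\Phi$ on $D$ follows from the same uniform bound via dominated convergence, so Morera's theorem yields that $\Phi$ is holomorphic on $D$. Alternatively, following the hint, I would differentiate under the integral sign and check the polar Cauchy--Riemann equations: writing $z=re^{i\theta}$ in the local coordinate furnished by the chosen branch, each $z^\alpha=r^\alpha\cos(\alpha\theta)+i\,r^\alpha\sin(\alpha\theta)$ satisfies $\partial_r u=\tfrac1r\partial_\theta v$ and $\tfrac1r\partial_\theta u=-\partial_r v$ for all $\theta$ in the local range, and these pass to $\Phi$ once differentiation under the integral is justified by the bounds $|z^\alpha|=|z|^\alpha$, $|\partial_r z^\alpha|=\alpha r^{\alpha-1}$, $|\partial_\theta z^\alpha|=\alpha r^{\alpha}$, which are uniform in $\alpha\in[0,1]$ because $r=|z|$ ranges over a compact subinterval of $(0,\infty)$ on $\overline{D}$.

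I expect the main obstacle to be exactly the passage across the negative real axis, which is where the naive principal-branch computation breaks down. The resolution is conceptual rather than computational: recognising that analyticity is local and that the branch cut is an artefact of a global single-valued choice, so that at each fixed $z_0\neq0$ — negative axis included — one may rotate the cut out of a neighbourhood of $z_0$ and run the Morera--Fubini (equivalently polar Cauchy--Riemann) argument verbatim. The only genuine failure is at $z=0$, where for non-integer $\alpha$ the map $z\mapsto z^\alpha$ has a branch point; this is consistent with, and indeed responsible for, the logarithmic singularity of the inverse kernel $\kappa$ recorded in Lemma~\ref{lem:kappa}.
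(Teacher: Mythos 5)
Your proof is correct, and it is worth noting that it does genuinely more than the paper, whose entire argument is the one-line remark that the lemma ``can be shown by using the Cauchy--Riemann equations in polar form.'' Your primary route --- restrict to a disc $D$ with $0\notin\overline{D}$, fix a holomorphic branch of $\log$ on $D$, and apply Morera together with Fubini --- is a different engine from the paper's: it needs only continuity of $(\alpha,z)\mapsto\mu(\alpha)z^{\alpha}$ on the compact set $[0,1]\times\Gamma$ and Cauchy's theorem for each fixed $\alpha$, whereas the Cauchy--Riemann route requires justifying differentiation under the integral sign (which you also carry out, with the correct uniform bounds $|\partial_r z^{\alpha}|=\alpha r^{\alpha-1}$, $|\partial_\theta z^{\alpha}|=\alpha r^{\alpha}$ on $\overline{D}$, so the paper's sketch is subsumed as your alternative). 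Your flag about the negative real axis is a genuine and valuable clarification rather than pedantry: under the paper's global convention $\arg z\in(-\pi,\pi]$, the function $\Phi$ has the jump $2i\int_0^1\mu(\alpha)r^{\alpha}\sin(\pi\alpha)\,d\alpha$ across $(-\infty,0)$, which is strictly nonzero since $\mu\ge C_\mu>0$ on $(\beta_0,\beta)$ and $\sin(\pi\alpha)>0$ there, so the lemma as literally stated can only hold in the local, branch-wise sense you articulate, with $z=0$ the sole branch-point singularity. The paper silently glosses over this; fortunately nothing downstream is affected, since every subsequent use of the lemma (Lemmas~\ref{lem:re_phi}, \ref{lem:phi}, and the Bromwich integrals defining $\kappa$ and $G_{(\mu)}$) takes place on or to the right of the line $\re z=\gamma>0$, where the principal branch is unambiguous and your local argument coincides with the global one.
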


\par In the next two lemmas, we obtain important properties of $\Phi(z).$ 
\begin{lemma}\label{lem:re_phi}
$\;{\displaystyle
 \re(\Phi^{1/2}(z))\ge \frac{\sqrt{2}}{2}|\Phi^{1/2}(z)|,\ \re z=\gamma>0}$.
\end{lemma}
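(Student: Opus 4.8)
The plan is to reduce the stated inequality to the single assertion that $\re(\Phi(z))\ge 0$ on the half-plane $\re z>0$, and then to establish that assertion by a direct sign analysis of the integrand defining $\Phi$. For any nonzero $w=|w|e^{i\phi}$, the inequality $\re(w)\ge \frac{\sqrt2}{2}|w|$ is equivalent to $\cos\phi\ge\cos(\pi/4)$, i.e.\ to $|\arg w|\le \pi/4$. Applying this with $w=\Phi^{1/2}(z)$ and the principal branch of the square root (so that $\arg(\Phi^{1/2}(z))=\tfrac12\arg\Phi(z)$), the claim becomes $|\arg\Phi(z)|\le \pi/2$, which is in turn equivalent to $\re(\Phi(z))\ge 0$. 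Thus it suffices to prove $\re(\Phi(z))\ge 0$ whenever $\re z=\gamma>0$.

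To do this I would write $z=re^{i\theta}$. Since $\re z>0$ we have $\theta=\arg z\in(-\pi/2,\pi/2)$, and on the principal branch $z^\alpha=r^\alpha e^{i\alpha\theta}$, whence $\re(z^\alpha)=r^\alpha\cos(\alpha\theta)$. As $\mu$ is real-valued, taking real parts inside the integral defining $\Phi$ in \eqref{eqn:Phi} gives
\[
\re(\Phi(z))=\int_0^1 \mu(\alpha)\,r^\alpha\cos(\alpha\theta)\,d\alpha .
\]
For $\alpha\in(0,1]$ and $|\theta|<\pi/2$ we have $|\alpha\theta|<\pi/2$, so $\cos(\alpha\theta)>0$; combined with $r^\alpha>0$ and Assumption~\ref{mu_assumption} ($\mu(\alpha)\ge 0$) the integrand is nonnegative, and $\re(\Phi(z))\ge 0$. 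In fact, since $\mu(1)\ne0$ and $\mu\in C^1[0,1]$ forces $\mu>0$ on a neighborhood of $\alpha=1$, the integral is \emph{strictly} positive, so $\Phi(z)\ne 0$ and $\arg\Phi(z)$ is genuinely defined.

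There is no serious obstacle here; the argument is essentially the observation that a nonnegatively weighted superposition of points $z^\alpha$, each lying in the open right half-plane, again lies in the closed right half-plane. The only points needing a little care are the legitimacy of the branch choice $\arg(\Phi^{1/2})=\tfrac12\arg\Phi$ — valid precisely because $\re\Phi>0$ keeps $\arg\Phi$ strictly inside $(-\pi/2,\pi/2)\subset(-\pi,\pi]$ — and the elementary fact that the constraints $\alpha\le 1$ and $|\theta|<\pi/2$ confine $\alpha\theta$ to the interval on which cosine is strictly positive. Reversing the chain of equivalences from the first paragraph then yields $\re(\Phi^{1/2}(z))\ge\frac{\sqrt2}{2}|\Phi^{1/2}(z)|$, completing the proof.
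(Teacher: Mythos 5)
Your proof is correct and follows essentially the same route as the paper's: both reduce the claim to $\re(\Phi(z))\ge 0$ via the half-angle property of the principal square root, and both establish that nonnegativity by observing that $\re(z^{\alpha})=r^{\alpha}\cos(\alpha\theta)>0$ for $\alpha\in(0,1)$ and $|\theta|<\pi/2$, weighted by $\mu\ge 0$. Your added remark that $\mu(1)\ne 0$ forces strict positivity (so the argument of $\Phi$ is well defined) is a small but worthwhile refinement the paper leaves implicit.
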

\begin{proof}
  $\gamma>0$ implies that $\re z>0$, i.e. 
  $arg(z)\in (-\frac{\pi}{2},\frac{\pi}{2})$, 
  which together with $0<\alpha<1$ and $\mu(\alpha)\ge 0$ yields 
  $\re \Phi(z)\ge 0$, i.e. $arg(\Phi(z))\in 
  (-\frac{\pi}{2},\frac{\pi}{2})$. This gives $arg(\Phi^{1/2}(z))\in 
  (-\frac{\pi}{4},\frac{\pi}{4})$. Hence, 
  $$\re(\Phi^{1/2}(z))=\cos(arg(\Phi^{1/2}(z)))|\Phi^{1/2}(z)|
  \ge\frac{\sqrt{2}}{2}|\Phi^{1/2}(z)|,$$ 
  which completes the proof.
\end{proof}

\begin{lemma}\label{lem:phi}
$$\;{\displaystyle
 C_{\mu, \beta}\frac{\gamma^\beta-\gamma^{\beta_0}}{\ln \gamma} \le 
  C_{\mu, \beta} \frac{|z|^\beta-|z|^{\beta_0}}{\ln |z|} 
  \le|\Phi(z)|\le C\frac{|z|-1}{\ln |z|}},$$ for $z$ such that  
  $\re z=\gamma>e^{1/\beta}>0$.
\end{lemma}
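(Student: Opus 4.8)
The plan is to establish the three inequalities separately, working throughout with the polar representation $z = |z|e^{i\theta}$. Since $\re z = \gamma > 0$ we have $\theta = \arg(z) \in (-\tfrac{\pi}{2}, \tfrac{\pi}{2})$, so $z^\alpha = |z|^\alpha e^{i\alpha\theta}$ with $|\alpha\theta| < \tfrac{\pi}{2}$ for every $\alpha \in [0,1]$; in particular $\cos(\alpha\theta) > 0$ over the whole range of integration. Note also that $\re z = \gamma$ forces $|z| \ge \gamma > e^{1/\beta}$, so $\ln|z| > 1/\beta > 0$ and every logarithm below is positive. For the rightmost inequality I would simply apply the triangle inequality to the integral defining $\Phi$ and bound $\mu$ by its maximum $M := \max_{[0,1]}\mu$, which is finite since $\mu \in C^1[0,1]$:
\[
|\Phi(z)| \le \int_0^1 \mu(\alpha)|z|^\alpha\,d\alpha \le M\int_0^1 e^{\alpha\ln|z|}\,d\alpha = M\,\frac{|z|-1}{\ln|z|},
\]
giving the bound with $C = M$.

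For the middle inequality I would pass to the real part, which costs nothing since $|\Phi(z)| \ge \re\Phi(z)$, and then discard everything outside the interval $(\beta_0,\beta)$ on which $\mu \ge C_\mu$ (the interval supplied by the Remark following Assumption~\ref{mu_assumption}). The discarded integrand is nonnegative because $\mu \ge 0$, $|z|^\alpha > 0$ and $\cos(\alpha\theta) > 0$, so
\[
|\Phi(z)| \ge \re\Phi(z) = \int_0^1 \mu(\alpha)|z|^\alpha\cos(\alpha\theta)\,d\alpha \ge C_\mu\int_{\beta_0}^{\beta} |z|^\alpha\cos(\alpha\theta)\,d\alpha.
\]
The key observation is that the cosine factor admits a lower bound uniform in $\theta$: for $\alpha \in (\beta_0,\beta)$ we have $|\alpha\theta| < \beta\tfrac{\pi}{2} < \tfrac{\pi}{2}$, and since cosine is even and decreasing on $[0,\tfrac{\pi}{2}]$ this yields $\cos(\alpha\theta) \ge \cos(\beta\tfrac{\pi}{2}) > 0$. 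Evaluating $\int_{\beta_0}^\beta |z|^\alpha\,d\alpha = (|z|^\beta - |z|^{\beta_0})/\ln|z|$ then produces the middle inequality with $C_{\mu,\beta} := C_\mu\cos(\beta\tfrac{\pi}{2})$.

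Finally, the leftmost inequality is a statement purely about the real-variable function $h(r) := (r^\beta - r^{\beta_0})/\ln r$: since $|z| \ge \gamma$ on the line $\re z = \gamma$, it suffices to show $h$ is nondecreasing on $(e^{1/\beta}, \infty)$, whence $h(|z|) \ge h(\gamma)$ and multiplication by the same constant $C_{\mu,\beta}$ gives the claim. Differentiating, the sign of $h'(r)$ is governed by $r^\beta(\beta\ln r - 1) - r^{\beta_0}(\beta_0\ln r - 1)$; for $r > e^{1/\beta}$ the first term is positive, and comparing it with the second (using $\beta > \beta_0$ and $r > 1$) shows the whole expression is positive, so $h$ increases as required.

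I expect the only genuinely delicate point to be the uniform cosine estimate in the middle inequality: the bound must hold simultaneously for every $z$ on the vertical line, i.e. for every admissible $\theta$, and it is precisely the fact that $\beta < 1$ (so that $\beta\tfrac{\pi}{2}$ stays strictly below $\tfrac{\pi}{2}$) that saves the argument and prevents the constant from degenerating as $\theta \to \pm\tfrac{\pi}{2}$. The monotonicity calculus for $h$ is routine but genuinely uses the threshold $\gamma > e^{1/\beta}$ built into the hypotheses, and the upper bound is entirely elementary.
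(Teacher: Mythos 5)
Your proposal is correct and follows essentially the same route as the paper: the triangle inequality with $\max\mu$ for the upper bound, passing to $\re\Phi(z)$, restricting to the interval $(\beta_0,\beta)$ where $\mu\ge C_\mu$, and using the uniform bound $\cos(\alpha\theta)\ge\cos(\beta\tfrac{\pi}{2})$ for the middle inequality, and monotonicity of $(r^\beta-r^{\beta_0})/\ln r$ on $(e^{1/\beta},\infty)$ for the leftmost one. The only difference is that you supply the derivative computation for the monotonicity step, which the paper merely asserts.
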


\begin{proof}
For the right-hand side of the inequality, 
  $\mu(\alpha)\in C^1[0,1]$ obviously implies that there exists a 
  $C>0$ such that $|\mu(\alpha)|\le C$ on $[0,1]$. 
  Hence, 
  \begin{equation*}
   \begin{aligned}
     |\Phi(z)|\le \int_0^1 |\mu(\alpha)|\cdot |z|^\alpha \,d\alpha
     \le C \int_0^1 |z|^\alpha \,d\alpha=C\frac{|z|-1}{\ln|z|}.
   \end{aligned}
  \end{equation*}
  
  \par
For the left-hand side, write $z=r e^{i\theta}$. Since $\re z=\gamma>0$, 
  $\theta\in(-\frac{\pi}{2},\frac{\pi}{2})$, then
  \begin{equation*}
   \begin{aligned}
    |\Phi (z)|&\ge \re(\phi(z))=\int_0^1 \mu(\alpha) r^\alpha 
  \cos(\theta \alpha) \,d\alpha \\
  &\ge C_\mu \int_{\beta_0}^\beta r^\alpha \cos(\theta\alpha)\,d\alpha
   \ge C_\mu \cos(\beta\theta) \int_{\beta_0}^\beta r^\alpha \,d\alpha\\
  &\ge C_\mu \cos(\frac{\beta\pi}{2}) \int_{\beta_0}^\beta |z|^\alpha 
  \,d\alpha
  =C_{\mu, \beta} \frac{|z|^\beta-|z|^{\beta_0}}{\ln |z|}.
   \end{aligned}
  \end{equation*}
Recall $|z|\ge \gamma>e^{1/\beta}$, we have 
$\frac{|z|^\beta-|z|^{\beta_0}}{\ln |z|} 
\ge \frac{\gamma^\beta-\gamma^{\beta_0}}{\ln \gamma}$ due to the function 
$\frac{x^\beta-x^{\beta_0}}{\ln x} $ being increasing on the interval 
$(e^{1/\beta},+\infty)$.
\end{proof}

Now we are in a position to calculate the complex integral 
$\FT^{-1}\bigl(\frac{\Phi(z)/z}{\Phi(z)+\xi^2}\bigr)$.

\begin{lemma}\label{lem:inversefourier}
$\;{\displaystyle
 \FT^{-1}(\frac{\Phi(z)/z}{\Phi(z)+\xi^2})
 =\frac{\Phi^{1/2}(z)}{2z}e^{-\Phi^{1/2}(z)|x|}}$.
\end{lemma}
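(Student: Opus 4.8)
The plan is to compute the inverse Fourier transform
$$
\FT^{-1}\!\left(\frac{\Phi(z)/z}{\Phi(z)+\xi^2}\right)(x)
=\frac{1}{2\pi}\int_{-\infty}^{+\infty}
e^{ix\xi}\,\frac{\Phi(z)/z}{\Phi(z)+\xi^2}\,d\xi
$$
directly, treating $z$ (with $\re z=\gamma>0$) as a fixed parameter so that $\Phi(z)$ is a fixed complex number. The essential observation is that $\frac{1}{\Phi(z)+\xi^2}$ is, up to a constant, the Fourier transform of a two-sided decaying exponential: recall that for a complex number $w$ with $\re w>0$ one has $\FT^{-1}\!\left(\frac{1}{w^2+\xi^2}\right)(x)=\frac{1}{2w}e^{-w|x|}$. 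The role of $w$ here is played by $\Phi^{1/2}(z)$, and the whole computation hinges on this being a legitimate choice, which brings in the analytic results established just above.

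\textbf{First} I would fix the branch of the square root. The Laplace inversion contour has $\re z=\gamma>0$, so by Lemma~\ref{lem:re_phi} we know $\re(\Phi^{1/2}(z))\ge\frac{\sqrt2}{2}|\Phi^{1/2}(z)|>0$; this is precisely the condition $\re w>0$ needed to guarantee both that $e^{-\Phi^{1/2}(z)|x|}$ decays as $|x|\to\infty$ and that the contour/residue computation is valid. \textbf{Next} I would carry out the $\xi$-integral by residues. Writing $\Phi(z)+\xi^2=(\xi-i\Phi^{1/2})(\xi+i\Phi^{1/2})$, the integrand $e^{ix\xi}\frac{\Phi(z)/z}{\Phi(z)+\xi^2}$ has simple poles at $\xi=\pm i\Phi^{1/2}(z)$. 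For $x>0$ one closes the contour in the upper half $\xi$-plane and picks up the pole at $\xi=+i\Phi^{1/2}(z)$ (which lies in the upper half-plane precisely because $\re\Phi^{1/2}>0$), while for $x<0$ one closes below and picks up $\xi=-i\Phi^{1/2}(z)$; the decay of $e^{ix\xi}$ on the appropriate semicircle is what makes Jordan's lemma applicable. In either case the residue evaluates to
$$
\frac{1}{2\pi}\cdot 2\pi i\cdot\frac{\Phi(z)/z}{2i\Phi^{1/2}(z)}\,e^{-\Phi^{1/2}(z)|x|}
=\frac{\Phi^{1/2}(z)}{2z}\,e^{-\Phi^{1/2}(z)|x|},
$$
which is exactly the claimed identity. \textbf{Finally} I would absorb the two sign cases into the single factor $|x|$.

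\textbf{The main obstacle} I anticipate is not the residue arithmetic but the careful justification that the branch of $\Phi^{1/2}$ is well-defined and continuous along the entire Bromwich contour, and that the pole locations $\pm i\Phi^{1/2}(z)$ genuinely separate into the two half-planes for \emph{every} $z$ on that contour. This is where Lemma~\ref{lem:re_phi} does the real work: the uniform lower bound $\re(\Phi^{1/2}(z))\ge\frac{\sqrt2}{2}|\Phi^{1/2}(z)|$ keeps $\Phi^{1/2}(z)$ bounded away from the imaginary axis (equivalently, $\arg\Phi^{1/2}(z)\in(-\frac{\pi}{4},\frac{\pi}{4})$), so the poles never collide with the real $\xi$-axis and the contour-closing argument is uniformly valid. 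A secondary point to check is the applicability of Jordan's lemma, i.e. that the contribution of the large semicircular arc vanishes; since the integrand decays like $|\xi|^{-2}$ this is automatic and does not even require the exponential factor, but I would state it explicitly for completeness.
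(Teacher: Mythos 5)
Your proposal is correct and follows essentially the same route as the paper: the authors also treat $z$ as a fixed parameter with $\re z=\gamma>0$, invoke Lemma~\ref{lem:re_phi} to place the poles $\pm i\Phi^{1/2}(z)$ strictly in the upper and lower half-planes, close the contour with $C_+$ for $x>0$ and $C_-$ for $x<0$, discard the semicircular arcs via Jordan's lemma, and evaluate the single enclosed residue to obtain $\frac{\Phi^{1/2}(z)}{2z}e^{-\Phi^{1/2}(z)|x|}$. Your added remarks on the branch of the square root and the uniform separation of the poles from the real axis are sensible refinements of the same argument rather than a different method.
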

\begin{proof}
From the inverse Fourier transform formula we have
\begin{equation*}
\begin{aligned}
 \FT^{-1}\Bigl(\frac{\Phi(z)/z}{\Phi(z)+\xi^2}\Bigr)
 =\frac{1}{2\pi}\int_{-\infty}^{+\infty} e^{i x\xi} 
 \frac{\Phi(z)/z}{\Phi(z)+\xi^2} \,d\xi.
 \end{aligned} 
\end{equation*}
We denote the contour from $-R$ to $R$ by 
$C_0$, the semicircle with radius $R$ in the upper and lower half plane 
by $C_{R^+}$ and $C_{R^{-}}$, respectively. 
Also, let $C_+$, $C_-$ be the closed contours which consist of 
$C_0, C_{R^+}$ and $C_0, C_{R^-}$ respectively.

For the case of $x>0$, working on the closed contour $C_+$, we have
\begin{equation*}
 \begin{aligned}
  \quad\frac{1}{2\pi}\int_{-\infty}^{+\infty}\!\!\!e^{ i x\xi} 
 \frac{\Phi(z)/z}{\Phi(z)+\xi^2} \,d\xi
 &=\lim_{R\to \infty} \frac{1}{2\pi}\oint_{C_+}\!\! e^{ i x\xi} 
 \frac{\Phi(z)/z}{\Phi(z)+\xi^2} \,d\xi
 -\lim_{R\to \infty} \frac{1}{2\pi}\int_{C_R^+} \!\!e^{ i x\xi} 
 \frac{\Phi(z)/z}{\Phi(z)+\xi^2} \,d\xi\\
 &=\lim_{R\to \infty} \frac{1}{2\pi}\oint_{C_+}\!\!e^{ i x\xi} 
 \frac{\Phi(z)/z}{\Phi(z)+\xi^2} \,d\xi,
 \end{aligned}
\end{equation*}
where the second limit is $0$ as follows from Jordan's Lemma.
Since to $0<\alpha<1$, $\gamma>0$, by our assumptions
we have $\re(\Phi(z))\ge 0$, which in turn leads to $\re(\Phi^{1/2}(z))\ge 0$. 
Then there is only one singular point $\xi=i\Phi^{1/2}(z)$ in 
$C_+$ which is contained by the upper half plane.
By the residue theorem \cite{WhittakerWatson:1962}, we have
\begin{equation*}
  \lim_{R\to \infty} \frac{1}{2\pi}\oint_{C_+}e^{ i x\xi} 
 \frac{\Phi(z)/z}{\Phi(z)+\xi^2} \,d\xi
=\lim_{R\to \infty} 2\pi i\frac{1}{2\pi} 
e^{ixi\Phi^{1/2}(z)} \frac{\Phi(z)/z}{2i\Phi^{1/2}(z)}
=\frac{\Phi^{1/2}(z)}{2z}e^{-\Phi^{1/2}(z)x}.
\end{equation*}
For the case of $x<0$, we choose the closed contour 
$C_-$.
Since $\re(\Phi^{1/2}(z))\ge 0$, it follows that 
$\xi=-i\Phi^{1/2}(z)$ is the unique singular point in $C_-$.
Then a similar calculation gives
\begin{equation*}
 \begin{aligned}
  \quad\frac{1}{2\pi}\int_{-\infty}^{+\infty}\!\!\! e^{ i x\xi} 
 \frac{\Phi(z)/z}{\Phi(z)+\xi^2} \,d\xi
 &=-\lim_{R\to \infty} \frac{1}{2\pi}\oint_{C_-}\!\! e^{ i x\xi} 
 \frac{\Phi(z)/z}{\Phi(z)+\xi^2} \,d\xi
 +\lim_{R\to \infty} \frac{1}{2\pi}\int_{C_R^-}\!\!  e^{ i x\xi} 
 \frac{\Phi(z)/z}{\Phi(z)+\xi^2} \,d\xi\\
 &=-\lim_{R\to \infty} \frac{1}{2\pi}\oint_{C_-}\!\! e^{ i x\xi} 
 \frac{\Phi(z)/z}{\Phi(z)+\xi^2} \,d\xi\\
 &=\lim_{R\to \infty} \frac{\Phi^{1/2}(z)}{2z}e^{\Phi^{1/2}(z)x}
 = \frac{\Phi^{1/2}(z)}{2z}e^{\Phi^{1/2}(z)x}.
 \end{aligned}
\end{equation*}

Therefore, 
$$
 \FT^{-1}\Bigl(\frac{\Phi(z)/z}{\Phi(z)+\xi^2}\Bigr)
 =\frac{\Phi^{1/2}(z)}{2z}e^{-\Phi^{1/2}(z)|x|},
$$
which completes the proof.
\end{proof}

\subsection{The fundamental solution $\,G_{\mu}(x,t)$}

With the above lemma, we have 
\begin{equation*}
 \begin{aligned}
  u(x,t)&=\frac{1}{2\pi i}  \int_{\gamma-i\infty}^{\gamma+i\infty} e^{zt}
  \int_{-\infty}^{+\infty} \frac{\Phi^{1/2}(z)}{2z}e^{-\Phi^{1/2}(z)|x-y|}
  u_0(y)\,dy\,dz\\
  &=\int_{-\infty}^{+\infty} \left[\frac{1}{2\pi i}
  \int_{\gamma-i\infty}^{\gamma+i\infty}\frac{\Phi^{1/2}(z)}{2z}
  e^{zt-\Phi^{1/2}(z)|x-y|}\,dz\right] u_0(y)\,dy.
 \end{aligned}
\end{equation*}
Then we can define the fundamental solution $G_{(\mu)}(x,t)$ as
\begin{equation}\label{G_mu}
 G_{(\mu)}(x,t)=\frac{1}{2\pi i}
  \int_{\gamma-i\infty}^{\gamma+i\infty}\frac{\Phi^{1/2}(z)}{2z}
  e^{zt-\Phi^{1/2}(z)|x|}\,dz.
\end{equation}

The following three lemmas provide some important properties of 
$G_{(\mu)}(x,t)$. 

\begin{lemma}\label{lem:pointwise}
 The integral for $G_{(\mu)}(x,t)$ is convergent for each 
 $(x,t)\in (0,\infty)\times(0,\infty)$.
\end{lemma}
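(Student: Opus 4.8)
The plan is to establish absolute convergence of the Bromwich integral \eqref{G_mu} by parametrizing the contour as $z=\gamma+is$, $s\in\mathbb{R}$, and bounding the modulus of the integrand by an integrable function of $s$. First I would note that, since $\gamma>0$, the contour never meets the singularity at $z=0$ of the factor $\Phi^{1/2}(z)/(2z)$; as $\Phi$ is analytic on $\mathbb{C}\setminus\{0\}$ the integrand is continuous along the entire line, so the only question is the decay of the integrand as $|s|\to\infty$, equivalently as $|z|=\sqrt{\gamma^2+s^2}\to\infty$.

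The heart of the argument is the control of the exponential factor. Taking moduli gives $|e^{zt-\Phi^{1/2}(z)|x|}|=e^{\gamma t-\re(\Phi^{1/2}(z))|x|}$, in which $e^{\gamma t}$ is a harmless constant for fixed $t$. By Lemma~\ref{lem:re_phi} we have $\re(\Phi^{1/2}(z))\ge \frac{\sqrt{2}}{2}|\Phi(z)|^{1/2}$ on the contour, and the left-hand bound of Lemma~\ref{lem:phi} yields $|\Phi(z)|^{1/2}\ge c\,|z|^{\beta/2}/(\ln|z|)^{1/2}$ for large $|z|$. Hence the exponential factor is dominated by $e^{\gamma t}\exp\!\big(-\tfrac{\sqrt{2}}{2}c|x|\,|z|^{\beta/2}/(\ln|z|)^{1/2}\big)$, which, because $\beta>0$ and $x>0$, decays faster than any power of $|z|$ as $|s|\to\infty$. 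For the algebraic prefactor I would invoke the right-hand bound of Lemma~\ref{lem:phi}: $\big|\Phi^{1/2}(z)/(2z)\big|=|\Phi(z)|^{1/2}/(2|z|)\le C/\big(|z|^{1/2}(\ln|z|)^{1/2}\big)$ for large $|z|$. Multiplying the two estimates, the modulus of the integrand is at most a constant times $|z|^{-1/2}(\ln|z|)^{-1/2}\exp\!\big(-c'|x|\,|z|^{\beta/2}/(\ln|z|)^{1/2}\big)$; since $|z|\sim|s|$ as $|s|\to\infty$, this is integrable in $s$ over $\mathbb{R}$, which gives absolute and hence ordinary convergence of the integral.

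The step I would watch most carefully is confirming that the exponential genuinely dominates every algebraic factor as $|s|\to\infty$. This rests on $|\Phi^{1/2}(z)|$ growing like a strictly positive power of $|z|$, which is precisely what the two-sided estimate of Lemma~\ref{lem:phi} supplies; the logarithmic factors $(\ln|z|)^{\pm1/2}$ only temper, but do not defeat, the decay. I would also remark that the rate of decay degrades as $x\downarrow 0$, so the estimate is pointwise in $x$ and does not extend to $x=0$; but for each fixed $(x,t)\in(0,\infty)\times(0,\infty)$ the bound is more than sufficient to conclude convergence, which is exactly what the lemma asserts.
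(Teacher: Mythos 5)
Your proposal is correct and follows essentially the same route as the paper: bound the modulus of the integrand along the line $\re z=\gamma$ by combining the upper estimate of Lemma~\ref{lem:phi} for the prefactor $|\Phi^{1/2}(z)/(2z)|$ with Lemma~\ref{lem:re_phi} and the lower estimate of Lemma~\ref{lem:phi} for the exponential, so that stretched-exponential decay in $|z|^{\beta/2}/(\ln|z|)^{1/2}$ dominates the algebraic factor. Your added observations about continuity of the integrand on the contour and the degradation of the bound as $x\downarrow 0$ are consistent with, though not spelled out in, the paper's argument.
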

\begin{proof}
 \par Given $(x,t)\in (0,\infty)\times(0,\infty)$, with Lemmas 
 \ref{lem:re_phi} and \ref{lem:phi}, we have
 \begin{equation*}
 \begin{aligned}
   |G_{(\mu)}(x,t)|
   &\le \frac{1}{4\pi} \int_{\gamma-i\infty}^{\gamma+i\infty}
   |\frac{\Phi^{1/2}(z)}{z}|\cdot|e^{zt}|\cdot|e^{-\Phi^{1/2}(z)|x|}|\,dz\\
   &= \frac{1}{4\pi} \int_{\gamma-i\infty}^{\gamma+i\infty}
   \frac{|\Phi^{1/2}(z)|}{|z|}e^{\gamma t} 
   e^{-\re(\Phi^{1/2}(z)|x|)} \,d z\\
   &\le \frac{1}{4\pi} \int_{\gamma-i\infty}^{\gamma+i\infty}
   \frac{|\Phi^{1/2}(z)|}{|z|}e^{\gamma t} 
   e^{-\frac{\sqrt{2}}{2}|x||\Phi^{1/2}(z)|} \,d z\\
   &\le\frac{Ce^{\gamma t}}{4\pi}\int_{\gamma-i\infty}^{\gamma+i\infty}
   (|z|\ln|z|)^{-1/2}
   e^{-C_{\mu, \beta}|x|(\frac{|z|^\beta-|z|^{\beta_0}}{\ln |z|})^{1/2}}
   \,dz\\
   &\le \frac{Ce^{\gamma t}}{4\pi (\ln\gamma)^{1/2}}
   \int_{\gamma-i\infty}^{\gamma+i\infty} |z|^{-1/2}
   e^{-C_{\mu, \beta}|x|(\frac{C|z|^\beta}{\ln |z|})^{1/2}}
   \,dz<\infty.
  \end{aligned}
\end{equation*}
\end{proof}

\begin{lemma}\label{eqn:G_smooth}
 $G_{(\mu)}(x,t)\in C^\infty((0,\infty)\times(0,\infty))$. 
\end{lemma}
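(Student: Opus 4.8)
The plan is to prove smoothness by justifying differentiation under the integral sign in \eqref{G_mu} to arbitrary order in both variables simultaneously. Fix a compact set $K\subset(0,\infty)\times(0,\infty)$, so that there are constants $x_0>0$ and $t_1<\infty$ with $x\ge x_0$ and $t\le t_1$ for every $(x,t)\in K$. On $K$ we have $|x|=x$, so the integrand is genuinely smooth in $x$; each application of $\partial_t$ pulls down a factor $z$, and each application of $\partial_x$ pulls down a factor $-\Phi^{1/2}(z)$. Thus the formal $(m,k)$-th derivative of \eqref{G_mu} has integrand
\[
\frac{\Phi^{1/2}(z)}{2z}\,z^{m}\,\bigl(-\Phi^{1/2}(z)\bigr)^{k}\,e^{zt-\Phi^{1/2}(z)x}.
\]

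First I would bound the modulus of this integrand uniformly over $(x,t)\in K$. Using $|e^{zt}|=e^{\gamma t}\le e^{\gamma t_1}$ together with Lemma~\ref{lem:re_phi}, which gives $\re(\Phi^{1/2}(z))\ge\tfrac{\sqrt2}{2}|\Phi^{1/2}(z)|$, the modulus is at most
\[
\tfrac12\,e^{\gamma t_1}\,|\Phi^{1/2}(z)|^{\,k+1}\,|z|^{\,m-1}\,e^{-\frac{\sqrt2}{2}x_0|\Phi^{1/2}(z)|}.
\]
Then I would invoke Lemma~\ref{lem:phi}: on the contour $\re z=\gamma>e^{1/\beta}$ the lower bound $|\Phi(z)|\ge C_{\mu,\beta}\bigl(|z|^\beta-|z|^{\beta_0}\bigr)/\ln|z|$ shows that $|\Phi^{1/2}(z)|$ grows like the positive power $|z|^{\beta/2}$ (up to logarithmic factors) as $|z|\to\infty$, while the upper bound $|\Phi(z)|\le C(|z|-1)/\ln|z|$ controls the polynomial prefactor. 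Consequently the dominating bound has the shape $C\,|z|^{p}\,(\ln|z|)^{-q}\,\exp\!\bigl(-c\,x_0\,|z|^{\beta/2}(\ln|z|)^{-1/2}\bigr)$ for suitable constants $p,q,c>0$ depending on $m,k$.

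The decisive point is that this bound is independent of $(x,t)\in K$ and integrable along the Bromwich line. Parametrising $z=\gamma+is$, so that $|z|\sim|s|$ as $|s|\to\infty$, the stretched-exponential factor $\exp\!\bigl(-c\,x_0\,|s|^{\beta/2}(\ln|s|)^{-1/2}\bigr)$ decays faster than any inverse power of $|s|$ and therefore absorbs the polynomial growth $|s|^{p}$ generated by the $m$ time-derivatives and $k$ space-derivatives, giving a finite integral. The main obstacle is precisely this competition: repeated differentiation injects the unbounded factors $z^{m}$ and $\Phi^{1/2}(z)^{k}$ into the integrand, and one must confirm that the super-polynomial decay survives — which hinges on $\beta>0$ and on the uniform positivity $x\ge x_0$ available on the compact set $K$. (At $x=0$ this decay is lost, which is exactly why the statement is restricted to $x>0$.) With a uniform integrable dominating function secured for every order $(m,k)$, the standard theorem on differentiating parameter integrals applies on $K$, showing $G_{(\mu)}$ is continuously differentiable to all orders there; since $K$ was an arbitrary compact subset, $G_{(\mu)}\in C^\infty\bigl((0,\infty)\times(0,\infty)\bigr)$, with every derivative given by differentiating \eqref{G_mu} under the integral sign.
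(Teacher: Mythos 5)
Your proof is correct, and it is in fact more complete than the paper's own argument. The paper establishes continuity by a dominated convergence argument (splitting the increment into an $x$-increment and a $t$-increment, bounding each by the integrable majorant from Lemma~\ref{lem:pointwise}, and passing to the limit), and then simply asserts that membership in $C^n$ for every $n$ ``follows similarly.'' You instead differentiate under the integral sign directly: you identify that the $(m,k)$-th formal derivative carries the extra factor $z^{m}\bigl(-\Phi^{1/2}(z)\bigr)^{k}$, and you show this polynomial growth (of order at most $|z|^{m+(k+1)/2}$ by the upper bound in Lemma~\ref{lem:phi}) is absorbed by the stretched-exponential decay $\exp\bigl(-\tfrac{\sqrt2}{2}x_0|\Phi^{1/2}(z)|\bigr)$ coming from Lemma~\ref{lem:re_phi} and the lower bound $|\Phi^{1/2}(z)|\gtrsim |z|^{\beta/2}(\ln|z|)^{-1/2}$, uniformly on compact subsets with $x\ge x_0>0$. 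This makes explicit exactly the step the paper leaves implicit — that repeated differentiation injects unbounded factors whose growth must be dominated — and correctly isolates why positivity of $\beta$ and the restriction to $x>0$ are what save the argument. Both proofs ultimately rest on the same two lemmas and on dominated convergence along the Bromwich contour; yours buys a genuinely airtight treatment of the higher derivatives at the cost of a little bookkeeping, while the paper's buys brevity at the cost of an unproved ``similarly.''
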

\begin{proof}
Fix $(x,t)\in (0,\infty)\times(0,\infty)$.
Then for small $|\epsilon_x|, |\epsilon_t|$ we have
\begin{equation*}
 \begin{aligned}
 \quad |G_{(\mu)}(x+\epsilon_x,t+\epsilon_t)-G_{(\mu)}(x,t)|
 &\le |G_{(\mu)}(x+\epsilon_x,t+\epsilon_t)-G_{(\mu)}(x,t+\epsilon_t)|\\
 &\quad+ |G_{(\mu)}(x,t+\epsilon_t)-G_{(\mu)}(x,t)|.
  \end{aligned}
\end{equation*}
For $|G_{(\mu)}(x+\epsilon_x,t+\epsilon_t)-G_{(\mu)}(x,t+\epsilon_t)|$, 
 the following holds 
 \begin{equation*}
 \begin{aligned}
 &\quad|G_{(\mu)}(x+\epsilon_x,t+\epsilon_t)-G_{(\mu)}(x,t+\epsilon_t)|\\
 &\le \frac{1}{2\pi} \int_{\gamma-i\infty}^{\gamma+i\infty}
   |\frac{\Phi^{1/2}(z)}{2z}|\cdot|e^{zt+z\epsilon_t}|
   \cdot|e^{-\Phi^{1/2}(z)|x/2|}|\cdot
   |e^{-\Phi^{1/2}(z)(\frac{x}{2}+\epsilon_x)}
   -e^{-\Phi^{1/2}(z)(x/2)}|\ \,dz.
 \end{aligned} 
 \end{equation*}
From the proof of Lemma~\ref{lem:pointwise}, we have 
 \begin{equation*}
  \begin{aligned}
   |e^{-\Phi^{1/2}(z)(\frac{x}{2}+\epsilon_x)}-e^{-\Phi^{1/2}(z)(x/2)}|
 &\le |e^{-\Phi^{1/2}(z)(\frac{x}{2}+\epsilon_x)}|+|e^{-\Phi^{1/2}(z)(x/2)}|\\
 &\le e^{-\frac{\sqrt{2}}{2}|\Phi^{1/2}(z)|(\frac{x}{2}+\epsilon_x)}
 +e^{-\frac{\sqrt{2}}{2}|\Phi^{1/2}(z)|(x/2)} \le 2,
  \end{aligned}
 \end{equation*}
 and 
 $$
 \frac{1}{2\pi} \int_{\gamma-i\infty}^{\gamma+i\infty}
   |\frac{\Phi^{1/2}(z)}{2z}|\cdot|e^{zt+z\epsilon_t}|
   \cdot|e^{-\Phi^{1/2}(z)|x/2|}|\ \,dz<\infty.
 $$
Hence, after setting $e_1(z,\epsilon_x)=|e^{-\Phi^{1/2}(z)(\frac{x}{2}+\epsilon_x)}
-e^{-\Phi^{1/2}(z)(x/2)}|,$  
we can apply Lebesgue's dominated convergent theorem to deduce that 
\begin{equation*}
 \begin{aligned}
  &\lim_{\epsilon_x\to 0}|G_{(\mu)}(x+\epsilon_x,t+\epsilon_t)
  -G_{(\mu)}(x,t+\epsilon_t)|\\
 \le& \lim_{\epsilon_x\to 0} 
  \frac{1}{2\pi} \int_{\gamma-i\infty}^{\gamma+i\infty}
   |\frac{\Phi^{1/2}(z)}{2z}|\cdot|e^{zt+z\epsilon_t}|
   \!\cdot\!|e^{-\Phi^{1/2}(z)|x/2|}|\!\cdot e_1(z,\epsilon_x)\ \,dz\\
 =& \frac{1}{2\pi} \int_{\gamma-i\infty}^{\gamma+i\infty}
   |\frac{\Phi^{1/2}(z)}{2z}|\!\cdot\!|e^{zt+z\epsilon_t}|
   \!\cdot\!|e^{-\Phi^{1/2}(z)|x/2|}|\!\cdot\!
   \lim_{\epsilon_x\to 0}e_1(z,\epsilon_x)\ \,dz=0.
 \end{aligned}
\end{equation*}
A similar argument also shows that 
$\lim_{\epsilon_t\to 0}|G_{(\mu)}(x,t+\epsilon_t)-G_{(\mu)}(x,t)|=0$.
From this we deduce that 
$\lim_{\epsilon_x,\ \epsilon_t\to 0}
|G_{(\mu)}(x+\epsilon_x,t+\epsilon_t)-G_{(\mu)}(x,t)|=0$,
which shows that $G_{(\mu)}(x,t)\in C((0,\infty)\times(0,\infty))$.

Similarly, following from the proof of Lemma~\ref{lem:pointwise} 
and the above limiting argument, we obtain
$$G_{(\mu)}(x,t) \in C^n((0,\infty)\times(0,\infty)),\ \N+,$$ 
which leads to $G_{(\mu)}(x,t)\in C^\infty((0,\infty)\times(0,\infty))$ 
and this completes the proof.
\end{proof}

\begin{lemma}\label{lem:delta}
 \begin{equation*}
  \lim_{t\to 0} G_{(\mu)}(x,t)=\delta (x).
 \end{equation*}
\end{lemma}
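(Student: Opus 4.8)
The plan is to pass to the spatial Fourier transform and reduce the statement to properties of the distributional ODE \eqref{eqn:dODE} already established in Section~\ref{sec:eur}. Write $\tilde G(\xi,t)=\FT_x[G_{(\mu)}(\cdot,t)](\xi)$, using the paper's convention $(\FT\varphi)(\xi)=\int_{-\infty}^{\infty}e^{-ix\xi}\varphi(x)\,dx$, $\FT^{-1}g(x)=\frac{1}{2\pi}\int_{-\infty}^{\infty}e^{ix\xi}g(\xi)\,d\xi$. The first step is to compute this transform explicitly. Lemma~\ref{lem:inversefourier} identifies $\frac{\Phi^{1/2}(z)}{2z}e^{-\Phi^{1/2}(z)|x|}$ as $\FT^{-1}_x\bigl(\frac{\Phi(z)/z}{\Phi(z)+\xi^2}\bigr)$; applying $\FT_x$ to the definition \eqref{G_mu} and interchanging the spatial transform with the Bromwich integral gives
\[
\tilde G(\xi,t)=\frac{1}{2\pi i}\int_{\gamma-i\infty}^{\gamma+i\infty}e^{zt}\,\frac{\Phi(z)/z}{\Phi(z)+\xi^2}\,dz.
\]
Taking the Laplace transform of $\Dmu v=-\lambda v$, $v(0)=1$ yields $\widehat{v_\lambda}(z)=\frac{\Phi(z)/z}{\Phi(z)+\lambda}$, exactly the structure already used to derive the fundamental solution. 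Hence $\tilde G(\xi,t)=v_{\xi^2}(t)$, the unique solution of \eqref{eqn:dODE} with $\lambda=\xi^2$ provided by Lemma~\ref{lem:existence_uniqueness_u_n}.

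The second step invokes the qualitative information on $v_\lambda$. By Lemma~\ref{lem:u_is_cm}, $0\le \tilde G(\xi,t)=v_{\xi^2}(t)\le 1$ for every $\xi$ and every $t\in[0,T]$. Moreover, from the Volterra form $v_\lambda(t)=1-\lambda\,\Imu v_\lambda(t)=1-\lambda\int_0^t\kappa(\tau)\,v_\lambda(t-\tau)\,d\tau$, together with $\kappa\in L^1$ and $|v_\lambda|\le 1$, one gets $\bigl|v_\lambda(t)-1\bigr|\le \lambda\int_0^t|\kappa(\tau)|\,d\tau\to 0$, so $\tilde G(\xi,t)\to 1$ as $t\to 0^{+}$ for each fixed $\xi$. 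Thus the transforms converge pointwise to the constant $1=\FT[\delta]$ while staying uniformly bounded by $1$.

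The final step unwinds the transform against a test function. For $\varphi\in\mathcal S(\mathbb R)$, writing $G_{(\mu)}(\cdot,t)=\FT^{-1}\tilde G(\cdot,t)$ and applying Fubini gives
\[
\int_{-\infty}^{\infty}G_{(\mu)}(x,t)\,\varphi(x)\,dx
=\frac{1}{2\pi}\int_{-\infty}^{\infty}\tilde G(\xi,t)\,(\FT\varphi)(-\xi)\,d\xi.
\]
Since $|\tilde G(\xi,t)|\le 1$ and $\FT\varphi\in L^1$, dominated convergence and the pointwise limit $\tilde G\to 1$ give the limit $\frac{1}{2\pi}\int_{-\infty}^{\infty}(\FT\varphi)(-\xi)\,d\xi=\varphi(0)$ by Fourier inversion at the origin. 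Therefore $\int G_{(\mu)}(x,t)\varphi(x)\,dx\to\varphi(0)=\langle\delta,\varphi\rangle$, which is precisely the claimed convergence $G_{(\mu)}(\cdot,t)\to\delta$ in the distributional sense.

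The main obstacle is the rigorous justification of the two interchanges of integration: swapping $\FT_x$ with the Bromwich contour in the first step, and the Fubini step in the last. Both rest on absolute integrability, which is exactly what the estimates in the proof of Lemma~\ref{lem:pointwise} supply, since the integrand decays like $|z|^{-1/2}\exp\bigl(-C_{\mu,\beta}|x|(C|z|^\beta/\ln|z|)^{1/2}\bigr)$ along the contour, uniformly for $\xi$ in compacta and $t$ in compact subsets of $(0,\infty)$. A secondary technical point is the continuity of $v_\lambda$ up to $t=0$ with value $1$; this is \emph{not} immediate from $v_\lambda\in C^\infty(0,T)$ alone, but follows from the Volterra representation as indicated above.
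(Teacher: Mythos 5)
Your route is genuinely different from the paper's. The paper fixes $x\neq 0$, closes the Bromwich contour in a half-plane to show $\lim_{t\to0}G_{(\mu)}(x,t)=0$ off the origin, and separately computes $\int_{-\infty}^{\infty}\lim_{t\to0}G_{(\mu)}(x,t)\,dx=1$. You instead work on the Fourier side, identifying $\FT_x G_{(\mu)}(\cdot,t)$ with the distributed-ODE solution $v_{\xi^2}(t)$ of \eqref{eqn:dODE}, using $0\le v_{\xi^2}\le1$ (Lemma~\ref{lem:u_is_cm}) and the Volterra bound $|v_{\xi^2}(t)-1|\le\xi^2\|\kappa\|_{L^1(0,t)}$ to get a uniformly bounded pointwise limit $\tilde G(\xi,t)\to1$, and then testing against Schwartz functions. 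Conceptually this is attractive, and in one respect stronger than the paper's argument: pointwise vanishing for $x\neq0$ plus unit mass of the pointwise limit do not by themselves characterize convergence to $\delta$ in $\mathcal{D}'$, whereas your dominated-convergence step, once the transform identity is secured, genuinely delivers the distributional limit.

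The gap is in the step you yourself flag as the main obstacle. You justify interchanging $\FT_x$ with the Bromwich integral by appeal to ``absolute integrability supplied by the estimates in the proof of Lemma~\ref{lem:pointwise},'' but those estimates only control the integrand for $x$ bounded away from $0$. For Fubini you need
\begin{equation*}
\int_{\gamma-i\infty}^{\gamma+i\infty}\int_{-\infty}^{\infty}
\Bigl|\frac{\Phi^{1/2}(z)}{2z}\Bigr|\,e^{\gamma t}\,e^{-\re(\Phi^{1/2}(z))|x|}\,dx\,|dz|<\infty,
\end{equation*}
and the inner integral equals $e^{\gamma t}|\Phi^{1/2}(z)|/\bigl(|z|\,\re\Phi^{1/2}(z)\bigr)\le \sqrt{2}\,e^{\gamma t}/|z|$ by Lemma~\ref{lem:re_phi}: the exponential decay in $|z|$ is lost once you integrate over a neighborhood of $x=0$, and $\int |z|^{-1}\,|dz|$ diverges along the vertical line. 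So the double integral is not absolutely convergent and Fubini does not apply as stated; the same issue affects the final Parseval step unless $G_{(\mu)}(\cdot,t)\in L^1(\mathbb{R})$ is established or everything is read in $\mathcal{S}'$. This is repairable --- for instance by working with truncated contours $|\im z|\le R$ and controlling the tails, or by matching Laplace transforms in $t$ to prove $\FT_x G_{(\mu)}(\cdot,t)=v_{\xi^2}(t)$ in the tempered-distribution sense --- but as written the justification offered for the crucial identity $\tilde G(\xi,t)=v_{\xi^2}(t)$ does not hold up. (In fairness, the paper's own proof performs an interchange of exactly the same conditionally convergent type when it computes $\int_{-\infty}^{\infty}\lim_{t\to0}G_{(\mu)}(x,t)\,dx$.)
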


\begin{proof}
\par Fix $x\ne 0$, for each $t\in (0,\infty)$, 
\begin{equation*}
\left|\frac{\Phi^{1/2}(z)}{2z}\right|\cdot |e^{zt-\Phi^{1/2}(z)|x|}| 
\le e^{\gamma t}  \left|\frac{\Phi^{1/2}(z)}{2z}\right|\cdot|e^{-\Phi^{1/2}(z)|x|}|.
\end{equation*}
The proof of Lemma \ref{lem:pointwise} shows that 
$$\int_{\gamma-i\infty}^{\gamma+i\infty} \left|\frac{\Phi^{1/2}(z)}{2z}\right|
\cdot|e^{-\Phi^{1/2}(z)|x|}|<\infty,$$ 
then by dominated convergence theorem, we can deduce that 
\begin{equation}\label{eqn:equality_2}
 \begin{aligned}
  \lim_{t\to 0} G_{(\mu)}(x,t)&=
\lim_{t\to 0}\frac{1}{2\pi i} \int_{\gamma-i\infty}^{\gamma+i\infty}
\frac{\Phi^{1/2}(z)}{2z}  e^{zt-\Phi^{1/2}(z)|x|}\,dz\\
&=\frac{1}{2\pi i} \int_{\gamma-i\infty}^{\gamma+i\infty}
\frac{\Phi^{1/2}(z)}{2z}  \lim_{t\to 0}e^{zt-\Phi^{1/2}(z)|x|}\,dz\\
&=\frac{1}{2\pi i} \int_{\gamma-i\infty}^{\gamma+i\infty}
\frac{\Phi^{1/2}(z)}{2z}e^{-\Phi^{1/2}(z)|x|}\,dz,
 \end{aligned}
\end{equation}
for each $x\ne 0$. Let $z=\gamma+mi$, we have 
\begin{equation}\label{eqn:equality_1}
 \begin{aligned}
 \lim_{t\to 0} G_{(\mu)}(x,t) =\frac{1}{4\pi}
  \int_{-\infty}^{+\infty}\frac{\Phi^{1/2}(\gamma+mi)}{\gamma+mi}
  e^{-\Phi^{1/2}(\gamma+mi)|x|}\,dm.
 \end{aligned}
\end{equation}
Recalling the definition of the closed contour $C_-$ and
the proof of Lemma~\ref{lem:inversefourier}, we see the function 
$\frac{\Phi^{1/2}(\gamma+mi)}{\gamma+mi}e^{-\Phi^{1/2}(\gamma+mi)|x|}$
is analytic in $C_-$.
Then 
\begin{equation*}
\begin{aligned}
\int_{-\infty}^{+\infty}\frac{\Phi^{1/2}(\gamma+mi)}{\gamma+mi}
  e^{-\Phi^{1/2}(\gamma+mi)|x|}\,dm
  &=\lim_{R\to \infty}\int_{C_{R^-}}\!\!\!\frac{\Phi^{1/2}(\gamma+mi)}{\gamma+mi}
  e^{-\Phi^{1/2}(\gamma+mi)|x|}\,dm\\
  &=\lim_{R\to \infty}\int_{-\pi}^0 Rie^{i\theta}\frac{\Phi^{1/2}(\gamma+Rie^{i\theta})}
  {\gamma+Rie^{i\theta}} e^{-\Phi^{1/2}(\gamma+Rie^{i\theta})|x|}\,d\theta,
\end{aligned}
\end{equation*}
where $m=Re^{i\theta}$.
Since $\re (\gamma+Rie^{i\theta})=\gamma-R\sin\theta\ge 0$, following from 
the proofs of Lemmas~\ref{lem:re_phi} and \ref{lem:phi}, we can deduce that 
\begin{equation*}
 \begin{aligned}
\re (\Phi^{1/2}(\gamma+Rie^{i\theta}))
&\ge \frac{\sqrt{2}}{2} |\Phi^{1/2}(\gamma+Rie^{i\theta})|\\
&\ge C_{\mu, \beta} \frac{|\gamma+Rie^{i\theta}|^\beta
-|\gamma+Rie^{i\theta}|^{\beta_0}}{\ln |\gamma+Rie^{i\theta}|}
\ge C \frac{R^\beta-R^{\beta_0}}{\ln R},
 \end{aligned}
\end{equation*}
and 
$$|\Phi^{1/2}(\gamma+Rie^{i\theta})|
\le C\frac{|\gamma+Rie^{i\theta}|-1}{\ln |\gamma+Rie^{i\theta}|}
\le C\frac{|R|-1}{\ln |R|}$$
for large $R$.
Hence, as $R\to \infty$,  
\begin{equation*}
 \begin{aligned}
\Bigl|Rie^{i\theta}\frac{\Phi^{1/2}(\gamma+Rie^{i\theta})}
  {\gamma+Rie^{i\theta}}&e^{-\Phi^{1/2}(\gamma+Rie^{i\theta})|x|}\Bigr|\\
  &\le |\frac{Rie^{i\theta}}{\gamma+Rie^{i\theta}}|
  \!\cdot\! |\Phi^{1/2}(\gamma+Rie^{i\theta})|
  \!\cdot\! |e^{-\Phi^{1/2}(\gamma+Rie^{i\theta})|x|}|\\
  &\le C\frac{|R|-1}{\ln |R|}\!\cdot\! e^{-C \frac{R^\beta-R^{\beta_0}}{\ln R}|x|}
  \to 0,
 \end{aligned}
\end{equation*}
which implies 
$$\left|\int_{-\infty}^{+\infty}\frac{\Phi^{1/2}(\gamma+mi)}{\gamma+mi}
  e^{-\Phi^{1/2}(\gamma+mi)|x|}\,dm\right|
  \le \pi\cdot C\frac{|R|-1}{\ln |R|}\cdot e^{-C \frac{R^\beta-R^{\beta_0}}{\ln R}|x|}
  \to 0.$$
The above result and \eqref{eqn:equality_1} show that 
\begin{equation}\label{eqn:equality_3}
 \lim_{t\to 0} G_{(\mu)}(x,t)=0\ \text{for}\ x\ne 0.
\end{equation}

Now, we are in the position to calculate 
$\int_{-\infty}^\infty \lim_{t\to 0} G_{(\mu)}(x,t) \,dx$.
Equation~\eqref{eqn:equality_2} gives
\begin{equation*}
 \begin{aligned}
\int_{-\infty}^\infty \lim_{t\to 0} G_{(\mu)}(x,t) \,dx
  &=\int_{-\infty}^0 \lim_{t\to 0} G_{(\mu)}(x,t) \,dx
  +\int_0^\infty \lim_{t\to 0} G_{(\mu)}(x,t) \,dx\\
  &=\int_{-\infty}^0 \frac{1}{2\pi i} \int_{\gamma-i\infty}^{\gamma+i\infty}
\frac{\Phi^{1/2}(z)}{2z}e^{-\Phi^{1/2}(z)|x|}\,dz \,dx\\
&\qquad+\int_0^\infty \frac{1}{2\pi i} \int_{\gamma-i\infty}^{\gamma+i\infty}
\frac{\Phi^{1/2}(z)}{2z}e^{-\Phi^{1/2}(z)|x|}\,dz \,dx\\
  &=\frac{1}{2\pi i}  \int_{\gamma-i\infty}^{\gamma+i\infty}\int_{-\infty}^0
\frac{\Phi^{1/2}(z)}{2z}e^{\Phi^{1/2}(z)x}\,dx \,dz\\
&\qquad+\frac{1}{2\pi i} \int_{\gamma-i\infty}^{\gamma+i\infty}\int_0^\infty 
\frac{\Phi^{1/2}(z)}{2z}e^{-\Phi^{1/2}(z)x}\,dx \,dz.
\end{aligned}
\end{equation*}
Now Lemma~\ref{lem:re_phi} and the fact that $\re z=\gamma>0$ shows that 
\begin{equation*}
 \begin{aligned}
&\int_{-\infty}^0\frac{\Phi^{1/2}(z)}{2z}e^{\Phi^{1/2}(z)x}\,dx
=\frac{e^{\Phi^{1/2}(z)x}}{2z}\Big |_{-\infty}^0=\frac{1}{2z},\\
&\int_{0}^\infty\frac{\Phi^{1/2}(z)}{2z}e^{-\Phi^{1/2}(z)x}\,dx
=\frac{e^{-\Phi^{1/2}(z)x}}{2z}\Big |_\infty^0=\frac{1}{2z}.\\
 \end{aligned}
\end{equation*}
Therefore, 
${\displaystyle\;
\int_{-\infty}^\infty \lim_{t\to 0} G_{(\mu)}(x,t) \,dx
=\frac{1}{2\pi i}  \int_{\gamma-i\infty}^{\gamma+i\infty}
\frac{1}{2z}\cdot 2\,dz=1
}$,
which together with \eqref{eqn:equality_3} yields the conclusion.
\end{proof}
Lemma~\ref{lem:delta} allows us to make the definition
\begin{equation}\label{initial_G_mu}
G_{(\mu)}(x,0)=\lim_{t\to 0} G_{(\mu)}(x,t)=\delta (x).
\end{equation}

\subsection{The Theta functions:
$\theta_{\mu}(x,t)$ and $\overline{\theta}_{\mu}(x,t)$}

One very useful way to represent solutions to initial value problems
for a parabolic equation is through the $\theta-$function, \cite{Cannon:1984}.
For the case of the heat equation if we let $K(x,t)$ denote the
fundamental solution, then set
$\theta(x,t) = \sum_{m=-\infty}^{\infty}K(x+2m,t)$.
The value of this function lies in the following result.
If $u_t-u_{xx}=0$, $u(0,t)=f_0(t)$, $u(1,t)=f_1(t)$, $u(x,0)=u_0(x)$, then
$u(x,t)$ has the representation
\begin{equation}
\begin{aligned}
u(x,t) &= \int_0^1[\theta(x-\xi,t)-\theta(x+\xi,t)]u_0(\xi)\,d\xi\\
&\quad -2\int_0^t \frac{\partial\theta}{\partial x}(x,t-\tau)f_0(\tau)\,d\tau
+2\int_0^t \frac{\partial\theta}{\partial x}(x-1,t-\tau)f_1(\tau)\,d\tau.
\end{aligned}
\end{equation}
A generalization to the case of the fractional equation
$D_t^\alpha -u_{xx} = 0$ for a fixed $\alpha$, $0<\alpha\leq 1$ can be found
in \cite{RundellXuZuo:2013}.
Our aim is to
extend this representation result to the distributed fractional order case.

\begin{definition}\label{def:theta_func}
We define for each $\mu(\alpha)$ which satisfies Assumption \ref{mu_assumption},   
$${\displaystyle\;
\theta_{(\mu)}(x,t)=\sum_{m=-\infty}^{\infty} G_{(\mu)}(x+2m,t)}.$$
\end{definition}

\par The uniform convergence and smoothness property 
of $\theta_{(\mu)}(x,t)$ are established by the next lemma. 
\begin{lemma}\label{lem:uniform_theta}
 $\theta_{(\mu)}(x,t)$ is an even function on $x$ and 
 uniformly convergent on $(0,2)\times (0,T)$ for any positive $T$.
 Then $\theta_{(\mu)}(x,t)\in C^\infty((0,2)\times (0,\infty))$.
\end{lemma}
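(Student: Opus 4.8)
The plan is to establish the three assertions of the lemma in order---evenness, uniform convergence, and $C^\infty$ regularity---using a sharpened exponential-in-$|x|$ decay estimate for $G_{(\mu)}$ as the common engine. Evenness is immediate: by Lemma~\ref{lem:inversefourier} and the definition \eqref{G_mu}, $G_{(\mu)}(x,t)$ depends on $x$ only through $|x|$, so $G_{(\mu)}(-y,t)=G_{(\mu)}(y,t)$ for all $y$. Hence
$$\theta_{(\mu)}(-x,t)=\sum_{m=-\infty}^{\infty}G_{(\mu)}(-x+2m,t)=\sum_{m=-\infty}^{\infty}G_{(\mu)}(x-2m,t)=\theta_{(\mu)}(x,t),$$
the last equality following from the reindexing $m\mapsto -m$.

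The key preliminary step is to upgrade the merely finite bound of Lemma~\ref{lem:pointwise} into genuine exponential decay. Writing $g(|z|)=(|z|^\beta/\ln|z|)^{1/2}$, the final integrand in the proof of Lemma~\ref{lem:pointwise} carries the factor $e^{-c|x|g(|z|)}$ with $c>0$ absorbing the constants $C_{\mu,\beta},C$ there. By Lemma~\ref{lem:phi}, $g$ is increasing on $(e^{1/\beta},\infty)$, so on the Bromwich line $\re z=\gamma>e^{1/\beta}$ one has $g(|z|)\ge g(\gamma)=:c_0>0$. I split the exponent as $e^{-c|x|g(|z|)}\le e^{-\frac{c}{2}c_0|x|}\,e^{-\frac{c}{2}|x|g(|z|)}$ and, for $|x|\ge 1$, bound the residual factor by $e^{-\frac{c}{2}g(|z|)}$. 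Since $g(|z|)$ grows like $|z|^{\beta/2}/(\ln|z|)^{1/2}$, the leftover integral $\int_{\gamma-i\infty}^{\gamma+i\infty}|z|^{-1/2}e^{-\frac{c}{2}g(|z|)}\,|dz|$ is a finite constant, yielding
$$|G_{(\mu)}(x,t)|\le C\,e^{\gamma T}\,e^{-c_1|x|},\qquad c_1:=\tfrac{c}{2}c_0>0,$$
uniformly for $t\in(0,T)$ and $|x|\ge 1$.

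For uniform convergence, observe that for $x\in(0,2)$ every index with $m\ge 1$ or $m\le -2$ satisfies $|x+2m|\ge 2$, so the bound above gives $|G_{(\mu)}(x+2m,t)|\le C e^{\gamma T}e^{-2c_1|m|}$ uniformly on $(0,2)\times(0,T)$. As $\sum_m e^{-2c_1|m|}<\infty$, the Weierstrass $M$-test shows the tail $\sum_{|m|\ge 2}$ converges uniformly; the two remaining terms $m=0,-1$ are continuous and do not affect convergence of the partial sums, so $\theta_{(\mu)}$ converges uniformly on $(0,2)\times(0,T)$ for every $T>0$. For smoothness, differentiating \eqref{G_mu} under the $z$-integral produces a factor $\Phi^{1/2}(z)$ per $x$-derivative and a factor $z$ per $t$-derivative; by Lemma~\ref{lem:phi} both are polynomially bounded in $|z|$, hence harmless against the super-polynomial decay of $e^{-\frac{c}{2}|x|g(|z|)}$ and leaving the factor $e^{-c_1|x|}$ intact. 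Thus each $\partial_x^{j}\partial_t^{k}G_{(\mu)}(x+2m,t)$ obeys the same exponential-in-$|m|$ bound on any compact $[a,b]\times[t_0,T]\subset(0,2)\times(0,\infty)$, the differentiated series converge uniformly there by the $M$-test, and term-by-term differentiation to all orders is justified; since each $G_{(\mu)}(x+2m,\cdot)\in C^\infty$ by Lemma~\ref{eqn:G_smooth}, we conclude $\theta_{(\mu)}\in C^\infty((0,2)\times(0,\infty))$.

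I expect the main obstacle to be the second step: converting the convergent-but-only-finite integral estimate of Lemma~\ref{lem:pointwise} into the clean bound $e^{-c_1|x|}$ that is uniform in $t\le T$. The delicate point is that $g(|z|)$ is bounded below only by the positive constant $g(\gamma)$, so the exponent must be split to pull out the $|x|$-linear decay while retaining enough decay in $|z|$ to keep the residual $z$-integral finite. The near-singular terms $x+2m\approx 0$ (only $m=0,-1$ for $x\in(0,2)$) lie in the head of the series and therefore never threaten uniform convergence, although they are precisely why smoothness is asserted on the open strip and proved on compact subsets.
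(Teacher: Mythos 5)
Your proof is correct, and at its core it rests on the same mechanism as the paper's: Lemmas~\ref{lem:re_phi} and \ref{lem:phi} give $\re\Phi^{1/2}(z)\ge\frac{\sqrt2}{2}|\Phi^{1/2}(z)|\ge c_0>0$ on the Bromwich line, and that strictly positive lower bound is what converts the factor $e^{-\Phi^{1/2}(z)|x+2m|}$ into geometric decay in $m$. The packaging differs: the paper keeps the sum inside the $z$-integral, evaluates the geometric series in closed form, and extracts a factor $C_\gamma^{2N-2}$ with $0<C_\gamma<1$ while retaining $e^{-\frac{\sqrt2}{2}|\Phi^{1/2}(z)|}$ to keep the residual integral finite; you instead prove a pointwise bound $|G_{(\mu)}(x,t)|\le Ce^{\gamma T}e^{-c_1|x|}$ for $|x|\ge1$ by splitting the exponent, and then invoke the Weierstrass $M$-test. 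Your version is more modular and, importantly, your treatment of the $C^\infty$ claim is more complete than the paper's: the paper simply asserts that smoothness "follows from Lemma~\ref{eqn:G_smooth} and the uniform convergence," which as stated only yields continuity of the limit, whereas you explicitly show that each differentiated series (picking up polynomially bounded factors $\Phi^{1/2}(z)$ and $z$ against the super-polynomial decay in $|z|$) still converges uniformly on compact subsets, which is what term-by-term differentiation actually requires. Two cosmetic slips worth fixing: the tail should be indexed by $m\notin\{0,-1\}$ rather than $|m|\ge2$ (the term $m=1$ is covered by your decay bound, not a head term), and $|x+2m|\ge2$ gives $e^{-c_1|x+2m|}\le e^{2c_1}e^{-2c_1|m|}$, i.e.\ your summable majorant needs the harmless extra constant.
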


\begin{proof}
The even symmetric property follows from the definitions 
of $G_{(\mu)}(x,t)$ and $\theta_{(\mu)}(x,t)$ directly.

\par Given a positive $T,$ fix $(x,t)\in (0,2)\times (0,T)$, by Lemma \ref{lem:re_phi} we have
\begin{equation}\label{leftsum}
\begin{aligned}
  \sum_{|m|>N}|G_{(\mu)}(x+2m,t)|
  &\le \left|\frac{1}{2\pi i} \sum_{|m|>N}\int_{\gamma-i\infty}^{\gamma+i\infty}
 \frac{\Phi^{1/2}(z)}{2z} e^{zt-\Phi^{1/2}(z)|x+2m|} \,dz\right|\\
 &= \left|\frac{1}{2\pi i} \int_{\gamma-i\infty}^{\gamma+i\infty}
 \frac{\Phi^{1/2}(z)}{2z} \sum_{|m|>N}e^{zt-\Phi^{1/2}(z)|x+2m|} \,dz\right|\\
 &\le \frac{1}{2\pi } \int_{\gamma-i\infty}^{\gamma+i\infty} 
  \big|\frac{\Phi^{1/2}(z)}{2z}\big|e^{\gamma t}\sum_{|m|>N}
  e^{-\re(\Phi^{1/2}(z))|x+2m|} \,dz\\
 &\le \frac{1}{2\pi } \int_{\gamma-i\infty}^{\gamma+i\infty} 
  \big|\frac{\Phi^{1/2}(z)}{2z}\big|e^{\gamma t}\sum_{|m|>N}
  e^{-\frac{\sqrt{2}}{2}|\Phi^{1/2}(z)||x+2m|} \,dz.
 \end{aligned}
\end{equation}
For the series $\sum_{|m|>N}
  e^{-\frac{\sqrt{2}}{2}|\Phi^{1/2}(z)||x+2m|}$, 
Lemma~\ref{lem:phi} shows that 
\begin{equation*}
\begin{aligned}
&\sum_{|m|>N}e^{-\frac{\sqrt{2}}{2}|\Phi^{1/2}(z)||x+2m|}\\
=&\ (1-e^{-\sqrt{2}|\Phi^{1/2}(z)|})^{-1}
(e^{-\frac{\sqrt{2}}{2}|\Phi^{1/2}(z)|(2N+2+x)}+
e^{-\frac{\sqrt{2}}{2}|\Phi^{1/2}(z)|(2N+2-x)})\\
=&\ \frac{e^{-\frac{\sqrt{2}}{2}|\Phi^{1/2}(z)|(2N-2)}}
{1-e^{-\sqrt{2}|\Phi^{1/2}(z)|}}
e^{-\frac{\sqrt{2}}{2}|\Phi^{1/2}(z)|}
(e^{-\frac{\sqrt{2}}{2}|\Phi^{1/2}(z)|(3+x)}+
e^{-\frac{\sqrt{2}}{2}|\Phi^{1/2}(z)|(3-x)})\\
\le&\ 2 (1-e^{-\sqrt{2}(C_{\mu, \beta}\frac{\gamma^\beta-\gamma^{\beta_0}}
{\ln \gamma})^{1/2}})^{-1}
(e^{-\frac{\sqrt{2}}{2}(C_{\mu, \beta}\frac{\gamma^\beta-\gamma^{\beta_0}}
{\ln \gamma})^{1/2}})^{2N-2}
e^{-\frac{\sqrt{2}}{2}|\Phi^{1/2}(z)|}\\
\le&\ A_\gamma C_\gamma^{2N-2}e^{-\frac{\sqrt{2}}{2}|\Phi^{1/2}(z)|}
\end{aligned}
\end{equation*}
where 
$$A_\gamma=2 (1-e^{-\sqrt{2}(C_{\mu, \beta}\frac{\gamma^\beta-\gamma^{\beta_0}}
{\ln \gamma})^{1/2}})^{-1}, 
\quad
0<C_\gamma=e^{-\frac{\sqrt{2}}{2}(C_{\mu,\beta}\frac{\gamma^\beta-\gamma^{\beta_0}}
{\ln \gamma})^{1/2}}<1$$ 
only depend on $\gamma>0$. 
Inserting the above result into \eqref{leftsum} yields 
$$ 
\sum_{|m|>N}|G_{(\mu)}(x+2m,t)| 
\le  \frac{1}{2\pi} \int_{\gamma-i\infty}^{\gamma+i\infty} 
  \big|\frac{\Phi^{1/2}(z)}{2z}\big|e^{\gamma t} 
  A_\gamma C_\gamma^{2N-2}e^{-\frac{\sqrt{2}}{2}|\Phi^{1/2}(z)|}\,dz.
$$
Meanwhile, from the proof of Lemma~\ref{lem:pointwise}, we have 
$$\int_{\gamma-i\infty}^{\gamma+i\infty}   \big|\frac{\Phi^{1/2}(z)}{2z}\big|
  e^{-\frac{\sqrt{2}}{2}|\Phi^{1/2}(z)|}\,dz<\infty.$$ 
Therefore, 
$$ 
\sum_{|m|>N}|G_{(\mu)}(x+2m,t)| 
\le  CC_\gamma^{2N-2}
$$
where the constant $C$ only depends on 
$T$, $\gamma$ and $0<C_\gamma<1$ only depends on $\gamma$. 
We conclude from this that for each $\epsilon>0$, $\exists$ sufficiently large 
$N\in\mathbb{N}$ independent of $x,t$ such that
$$
\sum_{|m|>N}|G_{(\mu)}(x+2m,t)|<\epsilon
\ \text{for each}\ (x,t)\in (0,2)\times (0,T),
$$
which implies the uniform convergence of the series. 
Then the smoothness results follow from Lemma~\ref{eqn:G_smooth}  
and the uniform convergence.
\end{proof}

\par Now we introduce the definition of $\overline{\theta}_{(\mu)}(x,t)$ 
and state some of its properties. 
\begin{definition}
 \begin{equation*}\label{eqn:theta_bar}
 \overline{\theta}_{(\mu)}(x,t) 
 =\left(\Imu \frac{\partial ^2 \theta_{(\mu)}}{\partial t \partial x}
 \right)(x,t),\ (x,t)\in (0,2)\times(0,\infty).
 \end{equation*}
\end{definition}

\begin{lemma}\label{lem:dmu_xx}
$\Dmu\theta_{(\mu)}(x,t)=(\theta_{(\mu)}(x,t))_{xx}$,\quad  
$\Dmu\overline{\theta}_{(\mu)}(x,t)=(\overline{\theta}_{(\mu)}(x,t))_{xx}$ .
\end{lemma}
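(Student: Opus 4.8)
The plan is to first prove the identity for the fundamental solution $G_{(\mu)}$ off the singularity, and then transfer it to $\theta_{(\mu)}$ and $\overline{\theta}_{(\mu)}$ using the operator calculus of $\Dmu$ and $\Imu$ from Lemma~\ref{lem:kappa}.

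First I would show $\Dmu G_{(\mu)}(x,t)=\partial_{xx}G_{(\mu)}(x,t)$ for every $x\ne 0$, $t>0$, working through the Laplace characterization already built into \eqref{G_mu}. Writing $\hat G(x,z)=\frac{\Phi^{1/2}(z)}{2z}e^{-\Phi^{1/2}(z)|x|}$, the function $G_{(\mu)}(x,\cdot)$ is by construction the Bromwich inverse of $\hat G(x,\cdot)$, and for $x\ne 0$ one has $\partial_{xx}\hat G=\Phi(z)\hat G$ because $\partial_{xx}e^{-\Phi^{1/2}(z)|x|}=\Phi(z)e^{-\Phi^{1/2}(z)|x|}$ away from the corner. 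On the other hand, the Djrbashian--Caputo transform rule used in the representation derivation gives $\L[\Dmu G_{(\mu)}](x,z)=\Phi(z)\hat G(x,z)-\frac{\Phi(z)}{z}G_{(\mu)}(x,0)$, and by \eqref{initial_G_mu} the initial trace $G_{(\mu)}(x,0)=\delta(x)$ vanishes for $x\ne 0$. Hence $\L[\Dmu G_{(\mu)}]=\Phi\hat G=\partial_{xx}\hat G=\L[\partial_{xx}G_{(\mu)}]$, and uniqueness of the Laplace transform yields the identity. Differentiating under the Bromwich integral and interchanging $\partial_{xx}$ with $\L$ are justified by the super-polynomial decay $e^{-\re(\Phi^{1/2}(z))|x|}$ coming from Lemmas~\ref{lem:re_phi} and \ref{lem:phi}, exactly as in Lemma~\ref{lem:pointwise}.

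Second, I would pass to $\theta_{(\mu)}$ by term-by-term operation. For $x\in(0,2)$ each shifted argument $x+2m$ is nonzero, so each summand satisfies $\Dmu G_{(\mu)}(x+2m,\cdot)=\partial_{xx}G_{(\mu)}(x+2m,\cdot)$ by the first step. Lemma~\ref{lem:uniform_theta} provides uniform convergence of the series together with all its derivatives, and since $\eta$ and $\kappa$ are locally integrable, both $\Dmu$ (that is, $\eta\ast\partial_t$) and $\partial_{xx}$ may be exchanged with the infinite sum, giving $\Dmu\theta_{(\mu)}=\sum_m \Dmu G_{(\mu)}(x+2m,\cdot)=\sum_m \partial_{xx}G_{(\mu)}(x+2m,\cdot)=\partial_{xx}\theta_{(\mu)}$. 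Third, for $\overline{\theta}_{(\mu)}=\Imu(\partial_t\partial_x\theta_{(\mu)})$ I would use Lemma~\ref{lem:kappa}(1), namely $\Dmu\Imu=\mathrm{id}$ and $\Imu\Dmu\phi=\phi-\phi(0)$, together with the fact that $\partial_x$ and $\partial_t$ commute with the $t$-convolutions $\Dmu,\Imu$ (their kernels being $x$-independent). On one side $\Dmu\overline{\theta}_{(\mu)}=\Dmu\Imu(\partial_t\partial_x\theta_{(\mu)})=\partial_t\partial_x\theta_{(\mu)}$. On the other, using $\partial_{xx}\theta_{(\mu)}=\Dmu\theta_{(\mu)}$ from the second step, $\partial_{xx}\overline{\theta}_{(\mu)}=\Imu\,\partial_t\partial_x\Dmu\theta_{(\mu)}$; differentiating the convolution produces a boundary term, $\partial_t\partial_x\Dmu\theta_{(\mu)}=\Dmu(\partial_t\partial_x\theta_{(\mu)})+\eta(t)\,(\partial_t\partial_x\theta_{(\mu)})(x,0)$. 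Applying $\Imu$ and using $\Imu\Dmu\phi=\phi-\phi(0)$ yields $\partial_{xx}\overline{\theta}_{(\mu)}=\partial_t\partial_x\theta_{(\mu)}-(\partial_t\partial_x\theta_{(\mu)})(x,0)+(\partial_t\partial_x\theta_{(\mu)})(x,0)\,\Imu\eta$. The crucial point is that $\Imu\eta=\kappa\ast\eta\equiv 1$, since $\L[\kappa]=1/\Phi$ and $\L[\eta]=\Phi/z$ force $\L[\kappa\ast\eta]=1/z=\L[1]$; the two boundary contributions therefore cancel and $\partial_{xx}\overline{\theta}_{(\mu)}=\partial_t\partial_x\theta_{(\mu)}=\Dmu\overline{\theta}_{(\mu)}$.

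I expect the main obstacle to be the careful bookkeeping in the third step: the commutator of $\partial_t$ with the singular convolution $\Dmu$ generates the boundary term $\eta(t)(\partial_t\partial_x\theta_{(\mu)})(x,0)$, and its cancellation against the deficit in $\Imu\Dmu\phi=\phi-\phi(0)$ relies on recognizing $\eta$ and $\kappa$ as convolution inverses so that $\Imu\eta\equiv 1$. The remaining technical burden is the justification of the interchanges — differentiation under the Bromwich integral and moving $\Dmu,\Imu$ through the series — all of which reduce to the decay and local-integrability estimates already established in Lemmas~\ref{lem:re_phi}, \ref{lem:phi}, \ref{lem:pointwise} and \ref{lem:uniform_theta}.
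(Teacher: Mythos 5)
Your proposal is correct and follows essentially the same route as the paper: term-by-term application of $\Dmu G_{(\mu)}=(G_{(\mu)})_{xx}$ justified by the uniform convergence of Lemma~\ref{lem:uniform_theta}, then for $\overline{\theta}_{(\mu)}$ the identity $\Dmu\Imu=\mathrm{id}$ on one side and, on the other, the same cancellation between the boundary term produced by commuting $\partial_t$ past the convolution and the deficit in $\Imu\Dmu\phi=\phi-\phi(0)$, resting on $\kappa*\eta\equiv 1$. The only difference is that you supply an explicit Laplace-transform verification of $\Dmu G_{(\mu)}=(G_{(\mu)})_{xx}$ for $x\neq 0$, which the paper simply asserts.
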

\begin{proof}
 \par The first equality follows from the fact 
$\Dmu G_{(\mu)}(x,t)=(G_{(\mu)}(x,t))_{xx}$
and the uniform convergence of the series representation.
 
For the second equality, Lemma~\ref{lem:kappa} yields 
$\Dmu\overline{\theta}_{(\mu)}=\Dmu\Imu \frac{\partial ^2 
 \theta_{(\mu)}}{\partial t \partial x}
 =\frac{\partial ^2 \theta_{(\mu)}}{\partial t \partial x}$ and this together
with the first equality and Lemma~\ref{lem:uniform_theta} then gives
 \begin{equation*}
  \begin{aligned}
   (\overline{\theta}_{(\mu)})_{xx}
   &=\Imu \frac{\partial ^2 }{\partial t \partial x}(\frac{\partial^2\theta_{(\mu)}}{\partial x^2})
   =\Imu \frac{\partial ^2 }{\partial t \partial x} \Dmu\theta_{(\mu)}
   =\Imu \frac{\partial }{\partial t} \Dmu (\frac{\partial \theta_{(\mu)}}{\partial x})\\
   &=\kappa * \frac{\partial }{\partial t} 
   [\eta * \frac{\partial^2 \theta_{(\mu)}}{\partial t\partial x} ]
   =\kappa * \eta * \frac{\partial^3 \theta_{(\mu)}}{\partial t^2\partial x} 
   +\kappa * \eta\cdot \frac{\partial^2 \theta_{(\mu)}}{\partial t\partial x} (x,0)\\
   &=\int_0^t \frac{\partial^3 \theta_{(\mu)}}{\partial t^2 \partial x}  \,dt
   +\frac{\partial^2 \theta_{(\mu)}}{\partial t\partial x} (x,0)\\
   &=\frac{\partial^2 \theta_{(\mu)}}{\partial t\partial x} (x,t)
   -\frac{\partial^2 \theta_{(\mu)}}{\partial t\partial x} (x,0)
   +\frac{\partial^2 \theta_{(\mu)}}{\partial t\partial x} (x,0)
   =\frac{\partial^2  \theta_{(\mu)}}{\partial t\partial x},
  \end{aligned}
 \end{equation*}
which shows that the second equality holds. 
\end{proof}

\begin{lemma}\label{lem:boundary_overline_theta}
For each $\psi(t)\in L^2(0,\infty)$, we have
 \begin{equation*}
 \begin{split}
&\int_0^t \overline{\theta}_{(\mu)}(0+,t-s)\psi(s){\rm d}s=-\frac{1}{2} \psi(t),
\quad
\int_0^t \overline{\theta}_{(\mu)}(1-,t-s)\psi(s)\,ds = 0,\\
&\int_0^t \overline{\theta}_{(\mu)}(0-,t-s)\psi(s){\rm d}s=\frac{1}{2} \psi(t),
\quad
\int_0^t \overline{\theta}_{(\mu)}(-1+,t-s)\psi(s)\,ds = 0,\quad t\in (0,\infty).
 \end{split}   
 \end{equation*}
\end{lemma}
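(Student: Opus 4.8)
The plan is to pass to the $t$-Laplace transform, where the lattice sum defining $\theta_{(\mu)}$ sums to a closed hyperbolic expression whose boundary values can be read off directly. Throughout write $q:=\Phi^{1/2}(z)$ and let $\mathcal{L}_t$ denote the Laplace transform in $t$, inverted along the Bromwich line $\re z=\gamma$. Since the Bromwich integral in \eqref{G_mu} is exactly the inverse Laplace transform, $(\mathcal{L}_t G_{(\mu)})(x,z)=\frac{q}{2z}e^{-q|x|}$. Using the uniform convergence from Lemma~\ref{lem:uniform_theta} to transform the series term by term and summing the two resulting geometric series (split at $m=0$) for $0<x<2$, I would obtain
$$
(\mathcal{L}_t\theta_{(\mu)})(x,z)=\frac{q}{2z}\,\frac{\cosh\!\big(q(1-x)\big)}{\sinh q}.
$$

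Differentiating in $x$ gives $\partial_x(\mathcal{L}_t\theta_{(\mu)})(x,z)=-\frac{q^2}{2z}\frac{\sinh(q(1-x))}{\sinh q}$. To pass to $\overline{\theta}_{(\mu)}=\Imu\,\partial_t\partial_x\theta_{(\mu)}$, I use that $\Imu\phi=\kappa*\phi$ with $(\mathcal{L}_t\kappa)(z)=1/\Phi(z)$ (read off from the definition of $\kappa$ in Lemma~\ref{lem:kappa}) and that $(\partial_x\theta_{(\mu)})(x,0+)=0$ for $0<x<2$ (from \eqref{eqn:equality_3} together with the smoothness in Lemma~\ref{lem:uniform_theta}, since $\theta_{(\mu)}(\cdot,0+)\equiv0$ there). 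Hence $\mathcal{L}_t(\partial_t\partial_x\theta_{(\mu)})=z\,\partial_x(\mathcal{L}_t\theta_{(\mu)})$, and multiplying by $1/\Phi(z)=1/q^2$ collapses the $z$-dependence:
$$
(\mathcal{L}_t\overline{\theta}_{(\mu)})(x,z)=-\frac12\,\frac{\sinh\!\big(q(1-x)\big)}{\sinh q}.
$$
Letting $x\to0+$ this tends to $-\frac12$, while $x\to1-$ gives $0$.

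For the first two identities I would set $V(x,t)=\int_0^t\overline{\theta}_{(\mu)}(x,t-s)\psi(s)\,ds$ for interior $x$, so that $(\mathcal{L}_t V)(x,z)=(\mathcal{L}_t\overline{\theta}_{(\mu)})(x,z)\,(\mathcal{L}_t\psi)(z)$, and then send $x\to0+$ (resp. $x\to1-$); the limits of the transforms are $-\frac12(\mathcal{L}_t\psi)(z)$ and $0$, which are the transforms of $-\frac12\psi$ and $0$, yielding the stated boundary identities. The remaining two identities at $0-$ and $-1+$ then follow at once from the oddness of $\overline{\theta}_{(\mu)}$ in $x$ (inherited from the evenness of $\theta_{(\mu)}$ in Lemma~\ref{lem:uniform_theta}): the sign flips turn $-\frac12\psi$ into $+\frac12\psi$ and leave $0$ unchanged.

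The main obstacle is not the algebra but the two interchanges. The term-by-term Laplace transform of $\theta_{(\mu)}$ is justified by the uniform convergence and the $L^1$-in-$z$ bounds from the proofs of Lemmas~\ref{lem:pointwise} and \ref{lem:uniform_theta}. The passage $x\to0+$ (resp. $x\to1-$) through the transform is the delicate step: I would establish it in the weighted space $L^2(e^{-2\gamma t}\,dt)$ via the Plancherel identity on the line $\re z=\gamma$, which reduces the claim to dominated convergence for $\frac14\big|1-\frac{\sinh(q(1-x))}{\sinh q}\big|^2\,|(\mathcal{L}_t\psi)(z)|^2$; here $|(\mathcal{L}_t\psi)|^2$ is integrable because $\psi\in L^2$, the bracket tends to $0$ pointwise as $x\to0+$, and the uniform bound $\big|\frac{\sinh(q(1-x))}{\sinh q}\big|\le C$ needed to dominate comes precisely from $\re q\ge\frac{\sqrt2}{2}|q|$ (Lemma~\ref{lem:re_phi}) and the growth of $|\Phi|$ on the contour (Lemma~\ref{lem:phi}). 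This is where the earlier estimates on $\Phi$ do the real work; the evaluation of the boundary values themselves is immediate once the closed form above is in hand.
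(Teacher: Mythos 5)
Your proposal is correct and follows essentially the same route as the paper: both pass to the Laplace transform in $t$, sum the lattice series for $\theta_{(\mu)}$ into the same closed form (your $-\tfrac12\sinh(\Phi^{1/2}(z)(1-x))/\sinh(\Phi^{1/2}(z))$ is the paper's $\frac{e^{(x-2)\Phi^{1/2}(z)}-e^{-x\Phi^{1/2}(z)}}{2(1-e^{-2\Phi^{1/2}(z)})}$ in hyperbolic notation), evaluate the boundary limits, and conclude via the convolution theorem. Your only departures are cosmetic or additive: you deduce the $0-$ and $-1+$ cases from oddness rather than re-summing the series on $(-1,0)$, and you supply a Plancherel/dominated-convergence justification for interchanging $x\to 0+$ with the inverse transform, a step the paper passes over silently.
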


\begin{proof}
Fix $(x,t)\in (0,1)\times(0,\infty)$, then computing the Laplace 
transform yields 
\begin{equation}\label{eqn:L_theta}
\begin{aligned}
\L(\overline{\theta}_{(\mu)}(x,t))
   &=\L \Bigl[\kappa * \Bigl(\frac{\partial ^2}{\partial t\partial x}
   \sum_{m=-\infty}^{+\infty} G_{(\mu)}(x,t)\Bigr)\Bigr]\\
   &=\L \Bigl[\kappa *\Bigl(\sum_{m=-1}^{-\infty} \frac{1}{2\pi i}
   \int_{\gamma-i\infty}^{\gamma+i\infty}\frac{\Phi(z)}{2}
  e^{zt+\Phi^{1/2}(z)(x+2m)}\,dz\\
  &\quad-\sum_{m=0}^{+\infty} \frac{1}{2\pi i}
   \int_{\gamma-i\infty}^{\gamma+i\infty}\frac{\Phi(z)}{2}
  e^{zt-\Phi^{1/2}(z)(x+2m)}\,dz\Bigr)\Bigr]\\
  &=\L (\kappa)\cdot \L\Bigl(\sum_{m=-1}^{-\infty} \frac{1}{2\pi i}
   \int_{\gamma-i\infty}^{\gamma+i\infty}\frac{\Phi(z)}{2}
  e^{zt+\Phi^{1/2}(z)(x+2m)}\,dz\\
  &\quad-\sum_{m=0}^{+\infty} \frac{1}{2\pi i}
   \int_{\gamma-i\infty}^{\gamma+i\infty}\frac{\Phi(z)}{2}
  e^{zt-\Phi^{1/2}(z)(x+2m)}\,dz\Bigr)\\
  &=\frac{1}{\Phi(z)}\Bigl(\sum_{m=-1}^{-\infty} \frac{\Phi(z)}{2}e^{\Phi^{1/2}(z)(x+2m)}
  -\sum_{m=0}^{+\infty} \frac{\Phi(z)}{2}e^{-\Phi^{1/2}(z)(x+2m)}\Bigr)\\
  &=\frac{e^{(x-2)\Phi^{1/2}(z)}-e^{-x\Phi^{1/2}(z)}}
  {2(1-e^{-2\Phi^{1/2}(z)})},
  \end{aligned}
 \end{equation}
where the last equality follows from the fact 
$\re (\Phi^{1/2}(z))>0$ which is in turn ensured by Lemma~\ref{lem:re_phi}.
Therefore, 
 \begin{equation*}
 \begin{aligned}
 &\L\left(\int_0^t \overline{\theta}_{(\mu)}(0+,t-s)\psi(s) \,ds\right)
 =\L(\overline{\theta}_{(\mu)}(0+,t))\L(\psi(t))
 =-\frac{1}{2}\L(\psi(t));\\
 &\L\left(\int_0^t \overline{\theta}_{(\mu)}(1-,t-s)\psi(s) \,ds\right)
 =\L(\overline{\theta}_{(\mu)}(1-,t))\L(\psi(t))=0.
 \end{aligned} 
 \end{equation*}

 \par For $(x,t)\in (-1,0)\times(0,\infty)$, we have 
 \begin{equation*}
\begin{aligned}
\L(\overline{\theta}_{(\mu)}(x,t))
   &=\L \Bigl[\kappa *\Bigl(\sum_{m=0}^{-\infty} \frac{1}{2\pi i}
   \int_{\gamma-i\infty}^{\gamma+i\infty}\frac{\Phi(z)}{2}
  e^{zt+\Phi^{1/2}(z)(x+2m)}\,dz\\
  &\quad-\sum_{m=1}^{+\infty} \frac{1}{2\pi i}
   \int_{\gamma-i\infty}^{\gamma+i\infty}\frac{\Phi(z)}{2}
  e^{zt-\Phi^{1/2}(z)(x+2m)}\,dz\Bigr)\Bigr]\\
  &=\frac{1}{\Phi(z)}\Bigl(\sum_{m=0}^{-\infty} \frac{\Phi(z)}{2}e^{\Phi^{1/2}(z)(x+2m)}
  -\sum_{m=1}^{+\infty} \frac{\Phi(z)}{2}e^{-\Phi^{1/2}(z)(x+2m)}\Bigr)\\
  &=\frac{e^{x\Phi^{1/2}(z)}-e^{-(x+2)\Phi^{1/2}(z)}}
  {2(1-e^{-2\Phi^{1/2}(z)})},
  \end{aligned}
 \end{equation*}
 which gives 
 $\L(\overline{\theta}_{(\mu)}(0-,t))=\frac{1}{2}$ and  
 $\L(\overline{\theta}_{(\mu)}(-1+,t))=0,$ and completes the proof.
\end{proof}

\subsection{Representation of the solution to the initial-boundary value problem}

We will build the representation of the solution in this subsection from
four representations in terms of the theta functions;
the initial condition, the values of $u$ at each boundary $x=0$, $x=1$, and the
nonhomogeneous term $f$.

\begin{definition}\label{eqn:theta_kernels}
\begin{equation*}
 \begin{aligned}
u_1(x,t)&=\int_0^1(\theta_{(\mu)}(x-y,t)-\theta_{(\mu)}(x+y,t))u_0(y) 
   \,dy;\\
   u_2(x,t)&=-2\int_0^t \overline{\theta}_{(\mu)}(x,t-s)g_0(s)
   \,ds;\\
   u_3(x,t)&=2\int_0^t  \overline{\theta}_{(\mu)}(x-1,t-s)
   g_1(s)\,ds;\\
   u_4(x,t)&=\int_0^1\int_0^t[\theta_{(\mu)}(x-y,t-s)-
   \theta_{(\mu)}(x+y,t-s)]\cdot 
   [\frac{\partial}{\partial t}\Imu f(y,s)] \,ds\,dy.
  \end{aligned}
 \end{equation*}
\end{definition}

\par The following four lemmas give some properties of $u_j,\ j=1,2,3,4$.
\begin{lemma}\label{lem:u1u2u3u4}
$\;{\displaystyle \Dmu u_j=\frac{\partial^2  u_j}{\partial x^2},\ j=1,2,3}$,
$\;{\displaystyle 
\Dmu u_4=\frac{\partial^2 u_4}{\partial x^2}+f(x,t)}$,
where $(x,t)\in (0,1)\times(0,\infty)$.
\end{lemma}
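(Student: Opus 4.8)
The plan is to treat $u_1$ by direct differentiation and $u_2,u_3,u_4$ through the Laplace transform in $t$, exploiting in every case the two identities of Lemma~\ref{lem:dmu_xx}, namely that both $\theta_{(\mu)}$ and $\overline{\theta}_{(\mu)}$ solve the homogeneous distributed equation $\Dmu w = w_{xx}$. For $u_1$ there is no convolution in time, so the only issue is to move $\Dmu$ (acting on $t$) and $\partial_x^2$ under the $\int_0^1\cdots\,dy$ sign. The uniform convergence and smoothness of $\theta_{(\mu)}$ on $(0,2)\times(0,\infty)$ supplied by Lemma~\ref{lem:uniform_theta} justify these interchanges, after which $\Dmu u_1 = \int_0^1\big(\Dmu\theta_{(\mu)}(x-y,t)-\Dmu\theta_{(\mu)}(x+y,t)\big)u_0(y)\,dy$ becomes $\partial_x^2 u_1$ by Lemma~\ref{lem:dmu_xx}.

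For the remaining three functions the recurring tool is the Laplace symbol of $\Dmu$: since $\Dmu w = \eta * w'$ by \eqref{eqn:dist_frac_der} and $\L\eta = \Phi(z)/z$ by \eqref{eqn:Phi}, one has
\begin{equation*}
\L(\Dmu w)(z) = \Phi(z)\,\widehat{w}(z) - \frac{\Phi(z)}{z}\,w(0).
\end{equation*}
For $u_2$ and $u_3$ I would first note that each is a time convolution that vanishes at $t=0$, so the boundary term drops and $\L(\Dmu u_2) = \Phi(z)\L(u_2)$, and likewise for $u_3$. Using the closed forms of $\L(\overline{\theta}_{(\mu)}(x,\cdot))$ computed inside the proof of Lemma~\ref{lem:boundary_overline_theta}, one checks that applying $\partial_x^2$ to those exponentials multiplies them by $\Phi(z)$ (as $(\Phi^{1/2})^2=\Phi$), whence $\partial_x^2\L(u_2) = \Phi(z)\L(u_2) = \L(\Dmu u_2)$; inverting the transform gives $\Dmu u_2 = \partial_x^2 u_2$, and the same computation handles $u_3$.

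The function $u_4$ is where the source term is generated, and this is the crux. Writing $\Theta(x,z) = \L(\theta_{(\mu)}(x,\cdot))(z)$ and using $\L(\kappa)=1/\Phi$ from Lemma~\ref{lem:kappa}, the inner factor transforms as $\L\big(\tfrac{\partial}{\partial t}\Imu f(y,\cdot)\big) = \tfrac{z}{\Phi(z)}\widehat f(y,z)$ because $\Imu f(y,0)=0$. Since $u_4(x,0)=0$ as well, the symbol above gives $\L(\Dmu u_4) = \int_0^1[\Theta(x-y,z)-\Theta(x+y,z)]\,z\,\widehat f(y,z)\,dy$. To compare with $\partial_x^2 u_4$ I would transform the identity $\Dmu\theta_{(\mu)}=(\theta_{(\mu)})_{xx}$ of Lemma~\ref{lem:dmu_xx}, obtaining $\partial_x^2\Theta(x,z) = \Phi(z)\Theta(x,z) - \tfrac{\Phi(z)}{z}\theta_{(\mu)}(x,0)$, and then invoke \eqref{initial_G_mu}, which gives $\theta_{(\mu)}(x,0)=\sum_m\delta(x+2m)$. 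For $x,y\in(0,1)$ the shift $x+y\in(0,2)$ avoids the singular support $2\mathbb{Z}$, so that term carries no delta, while $x-y\in(-1,1)$ contributes exactly $\delta(x-y)$. Substituting into $\partial_x^2 u_4$ collapses the $\Theta$ terms back to $\L(\Dmu u_4)$ and leaves precisely $-\int_0^1\delta(x-y)\widehat f(y,z)\,dy = -\widehat f(x,z)$, so $\L(\partial_x^2 u_4) = \L(\Dmu u_4) - \widehat f$, i.e. $\L(\Dmu u_4) = \L(\partial_x^2 u_4 + f)$, and uniqueness of the Laplace transform yields the claim.

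The hardest part will be the bookkeeping for $u_4$: justifying the interchange of $\int_0^1 dy$ with the Laplace inversion and with $\partial_x^2$, and handling the distributional initial value $\theta_{(\mu)}(x,0)$ rigorously rather than formally — in particular confirming that the $x+y$ translate never meets $2\mathbb{Z}$ on the relevant range and that the single delta from the $x-y$ translate reproduces $f(x,t)$ after inversion. The convergence estimates from the proofs of Lemmas~\ref{lem:pointwise} and \ref{lem:uniform_theta}, together with the regularity of $f$, are what make these exchanges legitimate.
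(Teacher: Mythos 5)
Your proposal is correct, and for $u_2,u_3,u_4$ it takes a genuinely different route from the paper. The paper stays entirely in the time domain: it writes $\Dmu u_j=\eta*\partial_t u_j$, differentiates the convolutions via $\partial_t(a*b)=a'*b+a(0)\,b$, uses $\Dmu\Imu=\mathrm{id}$ from Lemma~\ref{lem:kappa}, and for $u_4$ extracts the source term from the distributional initial value $\theta_{(\mu)}(\cdot,0)=\sum_m\delta(\cdot+2m)$ appearing in the boundary term of that product rule, so that $\eta*\partial_t\Imu f=\Dmu\Imu f=f$. You instead pass to the Laplace domain, where $\Dmu$ becomes multiplication by $\Phi(z)$ (the initial-value terms vanishing since $u_2,u_3,u_4$ and $\Imu f$ all vanish at $t=0$), and you read off $(u_2)_{xx}=\Dmu u_2$ from the explicit exponential form of $\L(\overline{\theta}_{(\mu)}(x,\cdot))$ computed in Lemma~\ref{lem:boundary_overline_theta}; for $u_4$ the same delta comb reappears, now as the inhomogeneity in $\partial_x^2\Theta=\Phi\Theta-\tfrac{\Phi}{z}\theta_{(\mu)}(\cdot,0)$, and your observation that $x+y\in(0,2)$ misses $2\mathbb{Z}$ while $x-y\in(-1,1)$ contributes exactly one delta matches what the paper does implicitly when it keeps only $\delta(x-y)$. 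The trade-off: the paper's convolution calculus avoids justifying the interchange of $\L$ with $\partial_x^2$ and the final inversion step, while your symbol calculus makes the role of $\Phi(z)$ and the origin of the source term more transparent and reuses closed forms already established for Lemma~\ref{lem:boundary_overline_theta}. Your treatment of $u_1$ coincides with the paper's. The caveats you flag at the end (interchanges of $\int_0^1 dy$ with $\L^{-1}$ and $\partial_x^2$, rigor of the delta computation) are real but no more severe than the formal steps the paper itself performs.
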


\begin{proof}
For $u_1$, by Lemma~\ref{lem:dmu_xx}, we have
 \begin{equation*}
  \begin{aligned}
   \Dmu u_1 
   &= \int_0^1(\Dmu\theta_{(\mu)}(x-y,t)-\Dmu\theta_{(\mu)}(x+y,t))u_0(y) 
   \,dy\\
   &=\int_0^x(\Dmu\theta_{(\mu)}(x-y,t)-\Dmu\theta_{(\mu)}(x+y,t))u_0(y) 
   \,dy\\
   &\quad+
   \int_x^1(\Dmu\theta_{(\mu)}(x-y,t)-\Dmu\theta_{(\mu)}(x+y,t))u_0(y) 
   \,dy\\
   &=\int_0^x\Big[\theta_{(\mu)}(x-y,t)-\theta_{(\mu)}(x+y,t)\Big]_{xx}u_0(y) 
   \,dy\\
   &\quad+\int_x^1\Big[\theta_{(\mu)}(x-y,t)-\theta_{(\mu)}(x+y,t)\Big]_{xx}u_0(y) 
   \,dy\\
   &=\int_0^1\Big[\theta_{(\mu)}(x-y,t)-\theta_{(\mu)}(x+y,t)\Big]_{xx}u_0(y) 
   \,dy= \frac{\partial^2 u_1}{\partial x^2}.
  \end{aligned}
 \end{equation*}
 
For $u_2$, 
 \begin{equation*}
  \begin{aligned}
   \Dmu u_2 &=\eta * \frac{\partial u_2}{\partial t}
   =-2\eta * \frac{\partial}{\partial t}(\overline{\theta}_{(\mu)}*g_0)
   =-2\eta *(\frac{\partial}{\partial t}\overline{\theta}_{(\mu)})*g_0
   -2(\eta *g_0)\cdot \overline{\theta}_{(\mu)}(x,0)\\
   &=-2\Dmu \overline{\theta}_{(\mu)} *g_0
   =-2(\overline{\theta}_{(\mu)})_{xx}*g_0
   =(-2\overline{\theta}_{(\mu)}*g_0)_{xx}
   =(u_2)_{xx}.
  \end{aligned}
 \end{equation*}
In an analogous fashion to the above argument, we deduce that
$\Dmu u_3=(u_3)_{xx}$.

For $u_4$, using Lemmas~\ref{lem:delta}, \ref{lem:kappa} and
\ref{lem:uniform_theta} we obtain
\begin{equation*}
 \begin{aligned}
\Dmu u_4&=\eta * \frac{\partial u_4}{\partial t}
=\eta * \frac{\partial}{\partial t} 
\Bigl(\int_0^1 [\theta_{(\mu)}(x-y,\cdot)-\theta_{(\mu)}(x+y,\cdot)]
* [\frac{\partial}{\partial t}\Imu f(y,\cdot)]\,dy\Bigr)\\
&=\eta *\Bigl(\int_0^1 \frac{\partial}{\partial t}[\theta_{(\mu)}(x-y,\cdot)-\theta_{(\mu)}(x+y,\cdot)]
* [\frac{\partial}{\partial t}\Imu f(y,\cdot)]\,dy\Bigr)\\
&\quad+ \eta *\Bigl(\int_0^1 [\theta_{(\mu)}(x-y,0)-\theta_{(\mu)}(x+y,0)]
\cdot[\frac{\partial}{\partial t}\Imu f(y,t)]\,dy\Bigr)\\
&=\int_0^1 \eta*\frac{\partial}{\partial t}[\theta_{(\mu)}(x-y,\cdot)-\theta_{(\mu)}(x+y,\cdot)]
* [\frac{\partial}{\partial t}\Imu f(y,\cdot)]\,dy\\
&\quad +\eta *\Bigl(\int_0^1 [\delta(x-y)-\delta(x+y)]
\cdot[\frac{\partial}{\partial t}\Imu f(y,t)]\,dy\Bigr)\\
&=\int_0^1\Dmu [\theta_{(\mu)}(x-y,\cdot)-\theta_{(\mu)}(x+y,\cdot)]
* [\frac{\partial}{\partial t}\Imu f(y,\cdot)]\,dy
+\eta *\frac{\partial}{\partial t}\Imu f(x,t)\\
&=\int_0^1[\theta_{(\mu)}(x-y,\cdot)-\theta_{(\mu)}(x+y,\cdot)]_{xx}
* [\frac{\partial}{\partial t}\Imu f(y,\cdot)]\,dy
+\Dmu\Imu f(x,t)\\
&=(u_4)_{xx}+f(x,t).
 \end{aligned}
\end{equation*}
\end{proof}

\begin{lemma}\label{lem:initial_u}
$\;{\displaystyle \lim_{t\to 0} u_1(x,t)=u_0(x)}$,
$\;{\displaystyle \lim_{t\to 0} u_j(x,t)=0}$
for $j=2,3,4$, $x\in (0,1)$.
\end{lemma}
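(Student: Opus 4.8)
The plan is to treat $u_1$ separately from $u_2,u_3,u_4$, since only $u_1$ carries the nonzero initial data while the other three must vanish. For $u_1$, I would expand each theta function via its definition $\theta_{(\mu)}(x\pm y,t)=\sum_{m}G_{(\mu)}(x\pm y+2m,t)$ and isolate the single summand $G_{(\mu)}(x-y,t)$ (the $m=0$ term of $\theta_{(\mu)}(x-y,t)$). For a fixed interior $x\in(0,1)$ and $y\in(0,1)$, every remaining argument — namely $x-y+2m$ with $m\ne0$ and $x+y+2m$ for all $m$ — stays in a compact set bounded away from the lattice $2\mathbb{Z}$, uniformly in $y$, with a gap controlled by $\min\{x,1-x\}$. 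Hence by Lemma~\ref{lem:delta} (pointwise decay of $G_{(\mu)}$ off the origin), the uniform convergence of the series in Lemma~\ref{lem:uniform_theta}, and dominated convergence, the integrals of all these terms against $u_0$ tend to $0$ as $t\to0$. This reduces the claim to showing $\int_0^1 G_{(\mu)}(x-y,t)\,u_0(y)\,dy\to u_0(x)$.

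The surviving integral is handled by the approximate-identity property of $G_{(\mu)}(\cdot,t)$. Evaluating the Laplace–Fourier symbol $\frac{\Phi(z)/z}{\Phi(z)+\xi^2}$ at $\xi=0$ gives $\widehat{\widetilde{G}}_{(\mu)}(0,z)=1/z$, so $\int_{\mathbb{R}}G_{(\mu)}(x,t)\,dx=1$ for every $t>0$; together with the concentration in Lemma~\ref{lem:delta} and a uniform $L^1$ control (equivalently, nonnegativity) of $G_{(\mu)}$, this makes $G_{(\mu)}(\cdot,t)$ a genuine approximate identity. Splitting the integral into the regions $|x-y|<\varepsilon$ and $|x-y|\ge\varepsilon$ and using the continuity of $u_0$ (valid since $u_0\in H^2(0,1)\hookrightarrow C[0,1]$) then yields the limit $u_0(x)$. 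Securing the uniform $L^1$ bound on $G_{(\mu)}$ is the main obstacle here, and I expect it to follow from the complete monotonicity recorded in Lemma~\ref{lem:u_is_cm}.

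For $u_2$ and $u_3$ I would exploit that they are time-convolutions, $u_2=-2\,\overline{\theta}_{(\mu)}(x,\cdot)*g_0$ and $u_3=2\,\overline{\theta}_{(\mu)}(x-1,\cdot)*g_1$. For fixed $x\in(0,1)$ the spatial arguments $x$ and $x-1$ avoid the lattice $2\mathbb{Z}$, so $\overline{\theta}_{(\mu)}(x,\cdot)$ and $\overline{\theta}_{(\mu)}(x-1,\cdot)$ are continuous and bounded near $t=0$ (from the smoothness in Lemma~\ref{eqn:G_smooth} together with the $L^1$ kernel $\kappa$ of Lemma~\ref{lem:kappa}). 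Cauchy–Schwarz then gives $|u_2(x,t)|\le 2\,\|\overline{\theta}_{(\mu)}(x,\cdot)\|_{L^2(0,t)}\,\|g_0\|_{L^2(0,t)}$, and since $g_0\in L^2(0,\infty)$ the factor $\|g_0\|_{L^2(0,t)}\to0$ as $t\to0$; the identical estimate applies to $u_3$.

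Finally, for $u_4$ I would again use that the inner $s$-integral runs over the shrinking interval $(0,t)$: writing $u_4=\int_0^1\big[(\theta_{(\mu)}(x-y,\cdot)-\theta_{(\mu)}(x+y,\cdot))*(\partial_t\Imu f(y,\cdot))\big](t)\,dy$ and bounding by Young's inequality in $s$ reduces matters to the local integrability in time of $\theta_{(\mu)}(x\mp y,\cdot)$ and of $\partial_t\Imu f(y,\cdot)$. The only delicate point is the near-diagonal singularity of $\theta_{(\mu)}(x-y,\cdot)$ as $y\to x$; this is integrable because it is governed by $G_{(\mu)}$, whose time singularity at a lattice point is controlled by the very mass estimate $\int_{\mathbb{R}}G_{(\mu)}(x,t)\,dx=1$ used for $u_1$, so that $\int_0^1|\theta_{(\mu)}(x-y,\tau)|\,dy$ stays bounded uniformly in $\tau$. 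Dominated convergence then forces $u_4(x,t)\to0$, completing all four limits.
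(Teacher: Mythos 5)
Your decomposition is the same as the paper's in substance, but you supply considerably more detail than the paper does. The printed proof is a three-line formal computation: by the uniform convergence in Lemma~\ref{lem:uniform_theta} the limit $t\to 0$ is passed inside the series and the integral, $\theta_{(\mu)}(\cdot,0)$ is replaced by $\sum_m\delta(\cdot+2m)$ via \eqref{initial_G_mu}, and only the $\delta(x-y)$ term survives for $x,y\in(0,1)$; for $j=2,3,4$ the paper says only that the claim ``follows directly from the definitions'' (i.e.\ the time integrals run over the shrinking interval $(0,t)$), which is exactly the content of your Cauchy--Schwarz and Young estimates. So for $u_2,u_3,u_4$ your argument is a correct expansion of what the paper leaves implicit.

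For $u_1$ you are attempting to make the paper's distributional shorthand rigorous via an approximate-identity argument, and in doing so you correctly isolate the one step that neither you nor the paper actually closes: a uniform-in-$t$ bound on $\int_{\mathbb{R}}|G_{(\mu)}(x,t)|\,dx$ (equivalently, nonnegativity of $G_{(\mu)}$). Your proposed source, Lemma~\ref{lem:u_is_cm}, does not deliver this: it gives complete monotonicity in $t$ of the Fourier modes $\widehat{G}_{(\mu)}(\xi,t)=u_{\xi^2}(t)$, hence only $|\widehat{G}_{(\mu)}(\xi,t)|\le 1$, which controls nothing in $L^1_x$. Nonnegativity of $G_{(\mu)}(\cdot,t)$ would instead require complete monotonicity in the spectral parameter $\lambda\mapsto u_\lambda(t)$ together with a Bernstein/Schoenberg-type argument (available in Kochubei's work under the extra hypotheses on $\mu$ mentioned in the remark after Lemma~\ref{lem:kappa}), and it is not proved in this paper; the authors work formally with $G_{(\mu)}(x,0)=\delta(x)$ as established, itself somewhat formally, in Lemma~\ref{lem:delta}. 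Your plan is therefore the right one and strictly more careful than the printed proof, but as written it still rests on this one unsecured estimate.
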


\begin{proof}
\par For each $x\in (0,1)$, Lemmas~\ref{lem:uniform_theta} and 
\ref{initial_G_mu} yield that 
\begin{equation*}
\begin{aligned}
 \lim_{t\to 0} u_1&= \int_0^1(\theta_{(\mu)}(x-y,0)-\theta_{(\mu)}(x+y,0))u_0(y)\,dy\\
 &=\int_0^1 \sum_{m=-\infty}^\infty(\delta(x-y+2m)-\delta(x+y+2m))u_0(y)\,dy
 =\int_0^1 \delta(x-y)u_0(y)\,dy
 =u_0(x).\\
\end{aligned}
\end{equation*}
The other result follows directly from the definitions of $u_2$, $u_3$ and $u_4$.
\end{proof}

\begin{lemma}\label{lem:boundary_u14}
 $\;u_j(0,t)=u_j(1,t)=0$, for $\,j=1,4$ and $t\in (0,\infty)$.
\end{lemma}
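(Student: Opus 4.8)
The plan is to exploit two symmetry properties of the kernel $\theta_{(\mu)}$: its evenness in $x$, already recorded in Lemma~\ref{lem:uniform_theta}, together with its $2$-periodicity in $x$. Both $u_1$ and $u_4$ are built from the same antisymmetric combination $\theta_{(\mu)}(x-y,\cdot)-\theta_{(\mu)}(x+y,\cdot)$, so once these two symmetries are available the boundary values should follow by direct substitution, with no analytic machinery beyond what has already been established.

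First I would record the $2$-periodicity $\theta_{(\mu)}(x+2,t)=\theta_{(\mu)}(x,t)$. This is immediate from the definition $\theta_{(\mu)}(x,t)=\sum_{m=-\infty}^{\infty} G_{(\mu)}(x+2m,t)$: shifting $x\mapsto x+2$ and reindexing $m\mapsto m-1$ returns the same series, and the reindexing is legitimate precisely because of the uniform convergence established in Lemma~\ref{lem:uniform_theta}.

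Next, for $u_1$ I would evaluate at the two boundary points. At $x=0$ the kernel becomes $\theta_{(\mu)}(-y,t)-\theta_{(\mu)}(y,t)$, which vanishes identically by evenness, so $u_1(0,t)=0$. At $x=1$ the kernel is $\theta_{(\mu)}(1-y,t)-\theta_{(\mu)}(1+y,t)$; here I would use periodicity to write $\theta_{(\mu)}(1+y,t)=\theta_{(\mu)}(y-1,t)$ and then evenness to get $\theta_{(\mu)}(y-1,t)=\theta_{(\mu)}(1-y,t)$, so the kernel again vanishes pointwise in $y$ and $u_1(1,t)=0$.

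Finally, the identical computation applies to $u_4$, whose spatial kernel is the same antisymmetric combination $\theta_{(\mu)}(x-y,t-s)-\theta_{(\mu)}(x+y,t-s)$; evaluating at $x=0$ and $x=1$ makes the bracket vanish pointwise in $y$ and $s$ by the same evenness and periodicity argument, and the time convolution against $\frac{\partial}{\partial t}\Imu f$ then integrates zero to zero, giving $u_4(0,t)=u_4(1,t)=0$. There is no genuine obstacle in this lemma; the only point requiring a word of justification is the reindexing of the defining series needed for periodicity, which the uniform convergence from Lemma~\ref{lem:uniform_theta} supplies.
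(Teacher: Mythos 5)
Your argument is correct and is essentially the paper's own proof: evenness of $\theta_{(\mu)}$ handles $x=0$, and the $2$-periodicity you extract is exactly the reindexing $q=m-1$ of the series that the paper performs inline to handle $x=1$, with $u_4$ following by the identical kernel computation. No issues.
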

\begin{proof}
Since $\theta_{(\mu)}(x,t)$ is even on $x$ which is stated 
 in Lemma~\ref{lem:uniform_theta}, then 
 \begin{equation*}
  u_1(0,t)=\int_0^1(\theta_{(\mu)}(0-y,t)-\theta_{(\mu)}(0+y,t))
  u_0(y)\,dy=0.
 \end{equation*}
 We also have 
 \begin{equation*}
  \begin{aligned}
   u_1(1,t)&=\int_0^1(\theta_{(\mu)}(1-y,t)-\theta_{(\mu)}(1+y,t))
  u_0(y)\,dy\\
  &=\int_0^1(\theta_{(\mu)}(y-1,t)-\theta_{(\mu)}(1+y,t))
  u_0(y)\,dy\\
  &=\int_0^1\!\Big[\!\sum_{m=-\infty}^\infty G_{(\mu)}(y-1+2m,t)-
  \!\!\sum_{m=-\infty}^{\infty}G_{(\mu)}(y+1+2m,t)\Big]u_0(y)\,dy\\
  &=\int_0^1\!\Big[\!\sum_{q=-\infty}^\infty G_{(\mu)}(y+1+2q,t)-
  \!\!\sum_{m=-\infty}^{\infty}G_{(\mu)}(y+1+2m,t)\Big]u_0(y)\,dy=0,
  \end{aligned}
 \end{equation*}
where $q=m-1$.
 
Following from the above proof, we obtain the conclusion for $u_4$.
\end{proof}

\begin{lemma}\label{lem:boundary_u23}
   $u_2(0,t)=g_0(t)$, $u_2(1,t)=0$,
   $u_3(0,t)=0$, $u_3(1,t)=g_1(t)$,
for $t\in (0,\infty)$.
\end{lemma}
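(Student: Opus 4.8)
The plan is to obtain all four identities by substituting the one-sided boundary values of $\overline{\theta}_{(\mu)}$ from Lemma~\ref{lem:boundary_overline_theta} into the definitions of $u_2$ and $u_3$ given in Definition~\ref{eqn:theta_kernels}. Since $u_2$ and $u_3$ are defined only for $x\in(0,1)$, I read the quantities $u_2(0,t)$, $u_2(1,t)$, $u_3(0,t)$, $u_3(1,t)$ as the one-sided limits $x\to 0^+$ and $x\to 1^-$. For $u_2$ these limits sit at $\overline{\theta}_{(\mu)}(0+,\cdot)$ and $\overline{\theta}_{(\mu)}(1-,\cdot)$; for $u_3$, the spatial argument $x-1$ sweeps over $(-1,0)$ as $x$ sweeps over $(0,1)$, so $x\to 0^+$ corresponds to $\overline{\theta}_{(\mu)}(-1+,\cdot)$ and $x\to 1^-$ to $\overline{\theta}_{(\mu)}(0-,\cdot)$. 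These are exactly the four endpoint values that Lemma~\ref{lem:boundary_overline_theta} evaluates, so the matching of cases is the organizing step.

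The one nonroutine point is that the spatial boundary limit may be taken inside the time convolution, i.e.
$$\lim_{x\to 0^+} u_2(x,t) = -2\int_0^t \overline{\theta}_{(\mu)}(0+,t-s)\,g_0(s)\,ds,$$
and analogously at the other three endpoints. I would justify this by noting that $\overline{\theta}_{(\mu)}(x,\cdot)$ extends continuously to each endpoint, as is visible from the explicit Laplace-transform representation \eqref{eqn:L_theta} computed inside the proof of Lemma~\ref{lem:boundary_overline_theta} and from the smoothness and uniform convergence of $\theta_{(\mu)}$ established in Lemmas~\ref{lem:uniform_theta} and \ref{eqn:G_smooth}; combined with the integrability estimates behind Lemma~\ref{lem:pointwise}, this supplies an $s$-uniform integrable dominating function near each endpoint, so Lebesgue's dominated convergence theorem permits the interchange. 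The hypotheses $g_0,g_1\in L^2(0,\infty)$ ensure the convolutions themselves are well defined.

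Granting the interchange, the conclusions follow by direct substitution with $\psi=g_0$ or $\psi=g_1$ in Lemma~\ref{lem:boundary_overline_theta}: the first identity gives $u_2(0,t)=-2\bigl(-\tfrac12 g_0(t)\bigr)=g_0(t)$ and the second gives $u_2(1,t)=-2\cdot 0=0$, while the fourth and third give $u_3(0,t)=2\cdot 0=0$ and $u_3(1,t)=2\cdot\tfrac12 g_1(t)=g_1(t)$. I expect the main obstacle to be precisely the domination argument for passing the limit through the time integral; once a uniform integrable bound on $\overline{\theta}_{(\mu)}(x,t-s)$ near the endpoints is in hand, everything else is bookkeeping with the four cases.
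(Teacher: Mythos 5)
Your proposal is correct and follows essentially the same route as the paper, whose entire proof is the single sentence that the result ``follows from Lemma~\ref{lem:boundary_overline_theta} directly''; your matching of the four endpoint cases and the resulting constants $-2\cdot(-\tfrac12)=1$, $2\cdot\tfrac12=1$ is exactly the intended computation. The additional care you take in justifying the interchange of the spatial limit with the time convolution is a detail the paper leaves implicit, not a departure in method.
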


\begin{proof}
 The proof follows from Lemma \ref{lem:boundary_overline_theta} directly. 
 \end{proof}

 \par Now we can state  
\begin{theorem}[Representation theorem]\label{thm:representation}
 There exists a unique solution $u(x,t)$ of Equations~\eqref{eqn:one_dim_model},
which has the representation 
$\;{\displaystyle u(x,t)=\sum_{j=1}^4 u_j}$.
\end{theorem}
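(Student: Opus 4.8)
The statement has two halves, an existence claim—that $u:=\sum_{j=1}^4 u_j$ with the $u_j$ of Definition~\ref{eqn:theta_kernels} solves \eqref{eqn:one_dim_model}—and a uniqueness claim, and I would treat them separately. For existence almost all of the work is already contained in the four preceding lemmas, so the plan is simply to assemble them, the kernels $u_j$ being well defined by the uniform convergence and smoothness of $\theta_{(\mu)}$. Since $\Dmu$ and $\partial_x^2$ are linear, summing the identities of Lemma~\ref{lem:u1u2u3u4} gives
\begin{equation*}
\Dmu u-\partial_x^2 u=\sum_{j=1}^4\bigl(\Dmu u_j-\partial_x^2 u_j\bigr)=f(x,t)
\end{equation*}
on $(0,1)\times(0,\infty)$. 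Lemma~\ref{lem:initial_u} yields $\lim_{t\to0}u=u_0+0+0+0=u_0$, while Lemmas~\ref{lem:boundary_u14} and~\ref{lem:boundary_u23} give $u(0,t)=0+g_0(t)+0+0=g_0(t)$ and $u(1,t)=0+0+g_1(t)+0=g_1(t)$. Thus $u=\sum_{j=1}^4 u_j$ satisfies every line of \eqref{eqn:one_dim_model}, which is the existence half together with the claimed representation.

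For uniqueness I would argue that the difference $w:=u-\tilde u$ of any two solutions solves the fully homogeneous problem $\Dmu w-\partial_x^2 w=0$, $w(x,0)=0$, $w(0,t)=w(1,t)=0$, and show $w\equiv0$. The natural tool, consistent with the derivation of $G_{(\mu)}$, is the Laplace transform in $t$. Writing $\hat w(x,z)$ for the transform, the fact that the Laplace transform of $\Dmu w$ equals $\Phi(z)\hat w-(\Phi(z)/z)\,w(x,0)$ together with $w(x,0)=0$ reduces the equation to the constant-coefficient $x$-ODE $\Phi(z)\hat w-\hat w_{xx}=0$ subject to $\hat w(0,z)=\hat w(1,z)=0$. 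Its general solution is $\hat w(x,z)=c_1 e^{\Phi^{1/2}(z)x}+c_2 e^{-\Phi^{1/2}(z)x}$, and the two boundary conditions force $c_1+c_2=0$ and $2c_1\sinh\bigl(\Phi^{1/2}(z)\bigr)=0$. By Lemma~\ref{lem:re_phi}, $\re(\Phi^{1/2}(z))\ge\frac{\sqrt2}{2}|\Phi^{1/2}(z)|>0$ on $\re z=\gamma$, so $|\sinh(\Phi^{1/2}(z))|^2=\sinh^2(\re\Phi^{1/2}(z))+\sin^2(\im\Phi^{1/2}(z))>0$; hence $c_1=c_2=0$, $\hat w\equiv0$, and by injectivity of the Laplace transform $w\equiv0$.

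The genuinely delicate point is not the algebra above but fixing the solution class in which uniqueness is asserted and justifying that the Laplace transform of an admissible $w$ exists on $\re z=\gamma$ and may be interchanged with $\partial_x^2$ and with $\Dmu$; this requires an a priori growth or integrability bound on $w$ over $(0,1)\times(0,\infty)$, after which the transform manipulations are routine. I expect this admissibility/growth bookkeeping to be the main obstacle. An alternative that sidesteps the transform is to expand $w(x,t)=\sum_{n\ge1}w_n(t)\sqrt2\sin(n\pi x)$ in the Dirichlet eigenfunctions of $-\partial_x^2$; pairing the homogeneous equation against $\sqrt2\sin(n\pi x)$ and integrating by parts (the boundary terms vanishing because both $w$ and the eigenfunctions vanish at $x=0,1$) shows each coefficient satisfies $\Dmu w_n=-(n\pi)^2 w_n$ with $w_n(0)=0$, whence $w_n\equiv0$ by the scaling of Lemma~\ref{lem:existence_uniqueness_u_n}. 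This route mirrors how uniqueness was obtained for the general DDE in Corollary~\ref{cor:existence_uniqueness}, at the cost of having to justify the termwise application of $\Dmu$.
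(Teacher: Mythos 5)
Your existence argument coincides with the paper's: its proof is a single sentence assembling Lemmas \ref{lem:u1u2u3u4}, \ref{lem:initial_u}, \ref{lem:boundary_u14} and \ref{lem:boundary_u23} by linearity, exactly as you do. The only divergence is in the uniqueness half. The paper disposes of it by citing Corollary \ref{cor:existence_uniqueness}: the difference $w$ of two solutions solves the homogeneous Dirichlet problem \eqref{eqn:model_pde} with $f=0$ and $u_0=0$, and the uniqueness in that corollary (which ultimately rests on the eigenfunction expansion together with the ODE uniqueness of Lemma \ref{lem:existence_uniqueness_u_n}) forces $w\equiv 0$. Your second alternative, expanding $w$ in the Dirichlet eigenfunctions $\sqrt{2}\sin(n\pi x)$ and invoking the uniqueness of $\Dmu w_n=-(n\pi)^2 w_n$, $w_n(0)=0$, is therefore precisely the paper's route, merely unpacked. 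Your first alternative, via the Laplace transform in $t$, is genuinely different: it is clean at the level of the transformed two-point boundary value problem and uses Lemma \ref{lem:re_phi} to rule out zeros of $\sinh(\Phi^{1/2}(z))$, but, as you correctly flag, it carries the extra burden of an a priori growth or integrability bound on $w$ over $(0,1)\times(0,\infty)$ to justify the existence of $\hat w$ on $\re z=\gamma$ and the interchange of the transform with $\Dmu$ and $\partial_x^2$ --- a burden the paper sidesteps entirely by working in the weak-solution class where Corollary \ref{cor:existence_uniqueness} already applies. The most economical complete write-up is your existence paragraph plus the one-line reduction to that corollary.
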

\begin{proof}
The existence follows from Lemmas~\ref{lem:u1u2u3u4}, \ref{lem:initial_u}, 
\ref{lem:boundary_u14} and \ref{lem:boundary_u23}; while the uniqueness 
 is ensured by Corollary \ref{cor:existence_uniqueness}.
\end{proof}

\section{Determining the distributed coefficient $\mu(\alpha)$}
\label{inverse_problem}

In this section we state and prove two uniqueness theorems for the recovery
of the distributed  derivative $\mu$.
We show that by measuring the solution along a time trace from a fixed location
$x_0$ one can use this data to uniquely recover $\mu(\alpha)$.
This time trace can be one where the sampling point is located within the
interior of $\Omega=(0,1)$ and we measure $u(x_0,t)$, or we measure
the flux at $x^\star$; $u_x(x^\star,t)$ where $0<x^\star\leq 1$.
This latter case therefore includes measuring the flux on the right-hand
boundary $x=1$.

First we give the definition of the admissible set $\Psi$ 
according to Assumption~\ref{mu_assumption}.

\begin{definition}\label{def:Psi}
 Define the set $\Psi$ by 
 \begin{equation*}
  \Psi:=\{\mu\in C^1[0,1]:\ \mu\ge 0,  
  \ \mu(1)\ne 0,\ \mu(\alpha)\ge C_{\Psi}>0 \ \text{on}\ (\beta_0, \beta_1)\},
 \end{equation*}
where the constant $C_{\Psi}>0$ and the interval 
$(\beta_{0}, \beta_{1})\subset(0,1)$ 
only depend on $\Psi.$
\end{definition}

We introduce the functions 
$F(y;x_0)$ and $F_f(y;x^\star)$ in the next two lemmas. 
\begin{lemma}\label{lem:F}
Define the function $F(y;x_0)\in C^1((0,\infty),\mathbb{R})$ as 
$$ F(y;x_0)=\frac{e^{(x_0-2)y}-e^{-x_0 y}}{2(1-e^{-2y})},$$
where $x_0\in (0,1)$ is a constant. 
Then the function $F(y;x_0)$ is strictly increasing on the interval 
$(\frac{\ln (2-x_0)-\ln x_0}{2(1-x_0)}, \infty)\subset (0,\infty).$
\end{lemma}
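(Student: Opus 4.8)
The plan is to write $F(y;x_0)=N(y)/D(y)$ with numerator $N(y)=e^{(x_0-2)y}-e^{-x_0 y}$ and denominator $D(y)=2(1-e^{-2y})$, and then to show that the numerator of the quotient derivative $F'=(N'D-ND')/D^2$ is strictly positive on the claimed interval by controlling the sign of each factor separately.

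First I would record the elementary sign facts valid for all $y>0$ and $x_0\in(0,1)$. Since $x_0-2<-x_0$ we have $e^{(x_0-2)y}<e^{-x_0 y}$, so $N(y)<0$; clearly $D(y)>0$; and $D'(y)=4e^{-2y}>0$. Consequently, in $F'=(N'D-ND')/D^2$ the term $-ND'$ is already strictly positive everywhere, being the product of $-N>0$ and $D'>0$. It therefore remains only to guarantee $N'(y)>0$ on the relevant range.

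Next I would locate exactly where $N'$ changes sign. Computing $N'(y)=(x_0-2)e^{(x_0-2)y}+x_0 e^{-x_0 y}$, the inequality $N'(y)>0$ becomes, after dividing by $e^{-x_0 y}>0$ and rearranging, $e^{-2(1-x_0)y}<x_0/(2-x_0)$, that is $2(1-x_0)y>\ln\big((2-x_0)/x_0\big)$. Because $1-x_0>0$, the left side $e^{-2(1-x_0)y}$ is strictly decreasing, so this holds precisely when $y>y^\star:=\frac{\ln(2-x_0)-\ln x_0}{2(1-x_0)}$, and $y^\star$ is the unique sign change with $N'>0$ for $y>y^\star$. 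This is exactly the left endpoint appearing in the statement.

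Finally, for $y>y^\star$ both contributions to the sign of $F'$ are positive: $N'(y)D(y)>0$ and $-N(y)D'(y)>0$, hence $F'(y)>0$ and $F(\cdot;x_0)$ is strictly increasing there. The only real content is the short computation pinning down $y^\star$ as the threshold for $N'>0$; the rest is routine sign bookkeeping. As a sanity check (not needed for the lemma) I would note that factoring yields the closed form $F(y;x_0)=-\sinh((1-x_0)y)/(2\sinh y)$, from which one can see $F$ is in fact increasing on all of $(0,\infty)$; but the direct derivative argument above is what cleanly reproduces the specific interval in the statement.
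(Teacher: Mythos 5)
Your proof is correct and follows essentially the same route as the paper: both arguments reduce to computing $N'(y)=(x_0-2)e^{(x_0-2)y}+x_0e^{-x_0y}$ and identifying $y^\star=\frac{\ln(2-x_0)-\ln x_0}{2(1-x_0)}$ as the threshold beyond which $N'>0$, then combining this with the signs of $N$, $D$, $D'$ (the paper phrases this as ``a negative increasing numerator over a positive increasing denominator is increasing,'' which is the same sign bookkeeping you do via the quotient rule). Your closing observation that $F(y;x_0)=-\sinh((1-x_0)y)/(2\sinh y)$ is in fact increasing on all of $(0,\infty)$ is a nice extra, though not needed for the lemma as stated.
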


\begin{proof}
Since $x_0\in (0,1)$, 
$e^{(x_0-2)y}-e^{-x_0 y}<0$ and $2(1-e^{-2y})>0$ on  $(0,\infty)$. 
A direct calculation now yields
$$\frac{d}{dy}(e^{(x_0-2)y}-e^{-x_0 y})=(x_0-2)e^{(x_0-2)y}+x_0e^{-x_0 y}>0$$
for $y\in (\frac{\ln (2-x_0)-\ln x_0}{2(1-x_0)}, \infty)$. 
Then we have $e^{(x_0-2)y}-e^{-x_0 y}<0$ and strictly increasing on 
 $(\frac{\ln (2-x_0)-\ln x_0}{2(1-x_0)}, \infty)$. 
The function $2(1-e^{-2y})$ is  obviously both positive and strictly 
 increasing on $(\frac{\ln (2-x_0)-\ln x_0}{2(1-x_0)}, \infty)$.
Hence the function $F(y;x_0)$ is also strictly increasing on
$(\frac{\ln (2-x_0)-\ln x_0}{2(1-x_0)}, \infty)$, which completes 
 the proof.
 \end{proof}

 \begin{lemma}\label{lem:F_f}
 For the inverse problem with flux data, 
 define the function $F_f(y;x^\star)\in C^1((0,\infty),\mathbb{R})$ as 
 $$F_f(y;x^\star)=\frac{y e^{(x^\star-2)y}+ye^{-x^\star y}}
  {2(1-e^{-2y})},$$
  where $x^\star\in (0,1]$ is a constant. 
  Then the function $F_f(y;x^\star)$ is strictly decreasing on the interval 
  $(1/x^\star,\infty)\subset (0,\infty).$
\end{lemma}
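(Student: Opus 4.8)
The plan is to reduce $F_f$ to a ratio of hyperbolic functions and then show that its logarithmic derivative is negative on the stated interval. First I would multiply numerator and denominator by $e^{y}$. Using $e^{(x^\star-2)y}e^{y}+e^{-x^\star y}e^{y}=2\cosh((1-x^\star)y)$ and $(1-e^{-2y})e^{y}=2\sinh y$, this gives
$$F_f(y;x^\star)=\frac{y\cosh((1-x^\star)y)}{2\sinh y}.$$
Writing $a:=1-x^\star\in[0,1)$, we have $F_f=\tfrac{1}{2}\,y\cosh(ay)/\sinh y$, which is manifestly positive and $C^\infty$ on $(0,\infty)$ because $\sinh y>0$ there; in particular the asserted $C^1$ regularity is immediate.

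Since $F_f>0$, strict monotone decrease is equivalent to negativity of the logarithmic derivative, so next I would compute
$$\frac{F_f'(y)}{F_f(y)}=\frac{d}{dy}\Big[\ln y+\ln\cosh(ay)-\ln\sinh y\Big]=\frac{1}{y}+a\tanh(ay)-\coth y.$$
Call this $h(y)$; the goal becomes showing $h(y)<0$ for $y>1/x^\star=1/(1-a)$.

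The key observation is that each of the three terms is bounded by an elementary constant on this interval. For $y>1/(1-a)$ one has $1/y<1-a=x^\star$; moreover $\tanh(ay)<1$ yields $a\tanh(ay)\le a$ (equality only in the degenerate case $a=0$); and $\tanh y<y$ for $y>0$ forces $\coth y>1$. Adding these,
$$h(y)=\frac{1}{y}+a\tanh(ay)-\coth y<(1-a)+a-1=0,$$
the inequality being strict because the first and third bounds are strict. Hence $F_f'(y)=F_f(y)\,h(y)<0$ on $(1/x^\star,\infty)$, which is the claim.

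The main thing to get right is the hyperbolic rewriting, since the cancellation $(1-a)+a-1=0$ that drives the whole estimate is exactly calibrated to the endpoint $1/x^\star$; without collapsing the exponentials into $\cosh$, the clean bound $a\tanh(ay)\le a$ is not visible and the argument would instead require controlling the sign of a more awkward combination such as $\cosh(ay)(\sinh y-y\cosh y)+ay\sinh(ay)\sinh y$. I expect no genuine obstacle beyond this bookkeeping: once $F_f$ is in the form $y\cosh(ay)/(2\sinh y)$, the three term-by-term estimates are standard, and the degenerate case $x^\star=1$ (where $a=0$ and $F_f=y/(2\sinh y)$ is in fact decreasing on all of $(0,\infty)$) is subsumed automatically.
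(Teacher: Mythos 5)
Your proof is correct, but it takes a genuinely different route from the paper's. The paper applies the quotient rule directly to $F_f$ and displays the derivative as a sum of four terms of the form $(\text{coefficient})\cdot e^{(\cdot)y}$ over $2(1-e^{-2y})^2$; the (unstated) punchline is that once $y>1/x^\star$ each of the four coefficients $((x^\star-2)y+1)$, $(1-x^\star y)$, $(-x^\star y-1)$, $((x^\star-2)y-1)$ is negative, so the whole expression is negative. Your version instead collapses $F_f$ into the hyperbolic form $y\cosh((1-x^\star)y)/(2\sinh y)$ and bounds the logarithmic derivative $\tfrac1y+a\tanh(ay)-\coth y$ term by term, with the cancellation $(1-a)+a-1=0$ calibrated exactly to the threshold $y=1/x^\star$. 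Your approach is arguably more transparent: it makes visible why $1/x^\star$ is the right cutoff and handles the endpoint case $x^\star=1$ uniformly, whereas the paper's computation leaves the sign check to the reader. One small slip in your write-up: the implication ``$\tanh y<y$ forces $\coth y>1$'' is not the right deduction (it only gives $\coth y>1/y$); the correct one-line justification is $\tanh y<1$ for all $y>0$ (equivalently $\sinh y<\cosh y$), hence $\coth y>1$. With that wording fixed the argument is complete.
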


\begin{proof}
 \begin{equation*}
 \begin{split}
  \frac{\partial F_f}{\partial y}(y;x^\star )
  &=\frac{((x^\star -2)y+1)e^{(x^\star -2)y}+(1-x^\star y)e^{-x^\star y}}{2(1-e^{-2y})^2}\\
  &\quad +\frac{(-x^\star y-1)e^{(x^\star -4)y}
  +((x^\star -2)y-1)e^{(-x^\star -2)y}}{2(1-e^{-2y})^2},
 \end{split} 
 \end{equation*}
hence $\frac{\partial F_f}{\partial y}(y;x^\star )<0$ if
$y\in (1/x^\star,\infty)$ and the proof is complete. 
\end{proof}

For the important lemmas to follow, we need the Stone--Weierstrass and the
M\"untz--Sz\'asz Theorems.
See the appendix for statements and references for these results.

The next result shows that the set
$\{(n r)^x:\N+\}$ is complete in $L^2[0,1]$ for any positive integer $r$.
We give two proofs of this important lemma.

\begin{lemma}\label{lem:dense}
   For each $r\in \mathbb{N}^+,$ the vector space consisting with the 
   set of functions $\{(nr)^x:\ \N+\}$ is dense in the space 
   $L^2[0,1],$ i.e.
  $$\overline{span\{(nr)^x:\N+\}}=L^2[0,1]$$
  w.r.t $L^2$ norm. 
  In other words, the set $\{(nr)^x:\N+\}$ is complete in $L^2[0,1].$
 \end{lemma}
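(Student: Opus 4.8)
The plan is to give two independent arguments, matching the two theorems flagged just before the statement. The first and cleaner route is via Stone--Weierstrass. The key observation I would establish first is that the linear span $\mathcal{A}:=\operatorname{span}\{(nr)^x:n\in\mathbb{N}^+\}$ is in fact a subalgebra of $C[0,1]$. Indeed, for any $n,m\in\mathbb{N}^+$ one has $(nr)^x(mr)^x=(nmr^2)^x=\big((nmr)\,r\big)^x$, and since $nmr\in\mathbb{N}^+$ this product is again one of the generators; multiplying out finite linear combinations then shows that $\mathcal{A}$ is closed under multiplication. Because $r^x>0$ for every $x\in[0,1]$, the algebra $\mathcal{A}$ vanishes nowhere, and since $x\mapsto(2r)^x$ is strictly monotone it separates the points of $[0,1]$. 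The form of Stone--Weierstrass for subalgebras that need not contain the constants then gives $\overline{\mathcal{A}}=C[0,1]$ in the sup norm; as uniform convergence on $[0,1]$ implies $L^2$ convergence and $C[0,1]$ is dense in $L^2[0,1]$, the claim follows.

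The second route, via Müntz--Szász, I would set up through the annihilator characterization of density: it suffices to show that any $g\in L^2[0,1]$ with $\int_0^1 g(x)(nr)^x\,dx=0$ for all $n\in\mathbb{N}^+$ must vanish. Writing $(nr)^x=e^{x\ln(nr)}$ and substituting $t=e^x$ (so $x=\ln t$, $dx=dt/t$, and $x\in[0,1]$ corresponds to $t\in[1,e]$) converts these conditions into $\int_1^e \tilde{g}(t)\,t^{\lambda_n}\,\frac{dt}{t}=0$ with $\lambda_n=\ln(nr)$ and $\tilde{g}(t)=g(\ln t)$. Since the weight $1/t$ is bounded above and below on $[1,e]$, completeness of $\{(nr)^x\}$ in $L^2[0,1]$ is equivalent to completeness of the power system $\{t^{\lambda_n}\}$ in $L^2[1,e]$. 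The exponents satisfy $\lambda_n=\ln(nr)\to\infty$ and, by comparison with the divergent series $\sum 1/\ln n$, obey the gap condition $\sum_n 1/\lambda_n=\infty$; the Müntz--Szász theorem then yields completeness of $\{t^{\lambda_n}\}$, and transporting back through the substitution gives $g=0$.

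I expect the main obstacle to be different in each proof. In the Stone--Weierstrass argument the only real content is recognizing the multiplicative closure \emph{$(nr)^x(mr)^x=((nmr)r)^x$}, which hinges on the common factor $r$ recombining as $nmr^2=(nmr)\cdot r$; once this identity is in hand, the verification of the separation and non-vanishing hypotheses is immediate. In the Müntz--Szász argument the delicate point is matching the substituted problem to the precise hypotheses of the theorem as recorded in the appendix: one must confirm that it is legitimately applied on the interval arising from $t=e^x$ (equivalently, recast the claim as completeness of an exponential system on a finite interval), and that the $L^2$ form of the gap condition reduces, for $\lambda_n\to\infty$, to $\sum 1/\lambda_n=\infty$, which is exactly the divergence of $\sum 1/\ln(nr)$.
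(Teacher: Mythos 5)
Your proposal is correct and follows essentially the same two-pronged route as the paper: a Stone--Weierstrass argument combined with the observation that uniform density passes to $L^2$ density, and a M\"untz--Sz\'asz argument via the substitution $t=e^x$ with exponents $\lambda_n=\ln(nr)$ satisfying $\sum 1/\lambda_n=\infty$. If anything, your version is slightly more explicit than the paper's in the first argument, since you actually verify the multiplicative closure $(nr)^x(mr)^x=((nmr)r)^x$ and the nowhere-vanishing and point-separating hypotheses, whereas the paper simply asserts that the hypotheses hold and invokes the dichotomy that the closure is either $C[0,1]$ or the functions vanishing at a single point.
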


\begin{proof}
Clearly, $span\{(nr)^x:\N+\}$ satisfies all the conditions of the Stone--Weierstrass Theorem, so that the closure of $span\{(nr)^x:\N+\}$ w.r.t the 
continuous norm is either $C[0,1]$ or $\{f\in C[0,1]:f(x_0)=0, x_0\in[0,1]\}.$
  The two alternatives both yield that 
  $span\{(nr)^x:\ \N+\}$ is dense in $C[0,1]$ with respect to the $L^2$ norm, 
  which together with the fact $C[0,1]$ is dense in $L^2[0,1]$ gives 
  $span\{(nr)^x:\N+\}$ is dense in $L^2[0,1]$ and completes the proof.

As a second proof,
if for some $h\in C[0,1]$, $\int_0^1 (n r)^x h(x)\,dx = 0$ for all 
$n\in \mathbb{N}^+$ then 
$\int_0^1 e^{x\log(r n) } h(x)\,dx = 0$ and with the change of variables
$y = e^{x}$ this becomes 
$\int_1^e y^{\log(r n)}\tilde h(y)\,dy = 0$ for all $n\in \mathbb{N}^+$
where $\tilde h(y) = h(\log(y))/y$.
Since $\sum_{n=1}^\infty 1/\log(r n)$ diverges, the M\"untz-Sz\'asz theorem
shows that $\tilde h =0$ and hence $h(x) = 0$.
\end{proof}

We now have the main result of this paper.

\begin{theorem}[Uniqueness theorem for the inverse problem]\label{thm:uniqueness_mu}
In the  DDE~\eqref{eqn:one_dim_model}, set $u_0=g_1=f=0$ and let $g_0$
satisfy the following condition 
\begin{equation*}\label{eqn:condition_g_0_v1}
   (\L g_0)(z)\ne 0\ \text{for}\ z\in(0,\infty).
  \end{equation*}
Given $\mu_1$, $\mu_2\in \Psi$, denote the two weak solutions 
with respect to
$\mu_1$ and $\mu_2$ by $u(x,t;\mu_1)$ and $u(x,t;\mu_2)$ respectively.  
Then for any $x_0\in (0,1)$ and $x^\star\in(0,1]$, either
\begin{equation*}\label{interior_data}
u(x_0,t;\mu_1)=u(x_0,t;\mu_2)
\end{equation*}
or
\begin{equation*}\label{flux_data}
\frac{\partial u}{\partial x} 
(x^\star,t;\mu_1)=\frac{\partial u}{\partial x}(x^\star,t;\mu_2),\ t\in(0,\infty)
\end{equation*}
implies $\mu_1=\mu_2$ on $[0,1]$.
 \end{theorem}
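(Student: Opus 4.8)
The plan is to pass everything to the Laplace domain, where the boundary data enters the solution through the functions $F$ and $F_f$ of Lemmas~\ref{lem:F} and \ref{lem:F_f}, and then to combine their monotonicity with the completeness result of Lemma~\ref{lem:dense}. First I would invoke the representation Theorem~\ref{thm:representation}: with $u_0=g_1=f=0$ only the term $u_2$ of Definition~\ref{eqn:theta_kernels} survives, so
\[
u(x,t;\mu)=-2\int_0^t \overline{\theta}_{(\mu)}(x,t-s)\,g_0(s)\,ds .
\]
Taking the Laplace transform in $t$, using the convolution theorem together with the computation \eqref{eqn:L_theta} from the proof of Lemma~\ref{lem:boundary_overline_theta}, I obtain for $x_0\in(0,1)$
\[
(\L u)(x_0,z)=-2\,\L(\overline{\theta}_{(\mu)}(x_0,\cdot))(z)\,(\L g_0)(z)
=-2\,F\big(\Phi^{1/2}(z);x_0\big)\,(\L g_0)(z),
\]
since the kernel in \eqref{eqn:L_theta} is precisely $F(\Phi^{1/2}(z);x_0)$. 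For the flux data one differentiates \eqref{eqn:L_theta} in $x$, which replaces $F$ by $F_f(\Phi^{1/2}(z);x^\star)$.

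Next I would use injectivity of the Laplace transform to convert the hypothesis $u(x_0,t;\mu_1)=u(x_0,t;\mu_2)$ into
\[
F\big(\Phi_1^{1/2}(z);x_0\big)\,(\L g_0)(z)=F\big(\Phi_2^{1/2}(z);x_0\big)\,(\L g_0)(z),\qquad z\in(0,\infty),
\]
where $\Phi_j(z)=\int_0^1\mu_j(\alpha)z^\alpha\,d\alpha$. The assumption $(\L g_0)(z)\ne 0$ on $(0,\infty)$ lets me cancel this factor, leaving $F(\Phi_1^{1/2}(z);x_0)=F(\Phi_2^{1/2}(z);x_0)$. Because $\mu_1,\mu_2\in\Psi$, Lemma~\ref{lem:phi} shows $\Phi_j(z)\to\infty$ along the positive real axis, so for all sufficiently large real $z$ both arguments $\Phi_1^{1/2}(z),\Phi_2^{1/2}(z)$ lie in the interval on which $F(\cdot;x_0)$ is strictly increasing (Lemma~\ref{lem:F}); injectivity there forces $\Phi_1(z)=\Phi_2(z)$ for large real $z$. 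Since each $\Phi_j$ is analytic on $\mathbb{C}\setminus\{0\}$ and the reals where they agree have an accumulation point, the identity theorem upgrades this to $\Phi_1\equiv\Phi_2$, in particular on $(0,\infty)$. The flux case is identical, using that $F_f(\cdot;x^\star)$ is strictly decreasing, hence injective, on $(1/x^\star,\infty)$ (Lemma~\ref{lem:F_f}).

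Finally, $\Phi_1\equiv\Phi_2$ reads $\int_0^1(\mu_1-\mu_2)(\alpha)\,z^\alpha\,d\alpha=0$ for all $z>0$. Evaluating at $z=n$, $\N+$, gives $\int_0^1(\mu_1-\mu_2)(\alpha)\,n^\alpha\,d\alpha=0$ for every positive integer $n$; that is, $\mu_1-\mu_2\in L^2[0,1]$ is orthogonal to every $n^x$. The completeness of $\{n^x:\N+\}$ in $L^2[0,1]$ (Lemma~\ref{lem:dense} with $r=1$) then forces $\mu_1-\mu_2=0$ in $L^2$, and continuity upgrades this to $\mu_1\equiv\mu_2$ on $[0,1]$.

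I expect the main obstacle to be the middle step: justifying that the arguments $\Phi_j^{1/2}(z)$ genuinely enter, and stay within, the monotonicity window of $F$ (resp. $F_f$), and then promoting the equality of $\Phi_1,\Phi_2$ from a real ray to all of $\mathbb{C}\setminus\{0\}$. This rests on the growth estimate of Lemma~\ref{lem:phi}, which requires the uniform lower bound built into the admissible set $\Psi$, and on the analyticity of $\Phi$. The concluding completeness argument is the conceptual heart of the uniqueness result, but it is already isolated in Lemma~\ref{lem:dense}, so the remaining work is chiefly the careful bookkeeping of the Laplace-domain factorization.
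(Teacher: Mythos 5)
Your proposal is correct and follows essentially the same route as the paper: reduce to $u_2$ via the representation theorem, Laplace-transform and cancel $(\L g_0)(z)$, identify the kernel with $F(\Phi^{1/2}(z);x_0)$ (resp.\ $F_f(\Phi^{1/2}(z);x^\star)$), use monotonicity to get $\Phi_1=\Phi_2$ at a sequence of points, and finish with Lemma~\ref{lem:dense}. The only cosmetic difference is at the end: the paper skips your identity-theorem step by choosing the arithmetic sequence $nN^*$ with $N^*$ large enough that $\Phi_j^{1/2}(nN^*)$ lies in the monotonicity window for every $n$, and then applies Lemma~\ref{lem:dense} with $r=N^*$ (which is precisely why that lemma is stated for general $r$), whereas you extend $\Phi_1\equiv\Phi_2$ to all of $(0,\infty)$ by analytic continuation and take $r=1$; both are valid.
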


\begin{proof}
For the first case of $u(x_0,t;\mu_1)=u(x_0,t;\mu_2),$ fix $x_0\in (0,1)$, 
Theorem~\ref{thm:representation} yields for $k=1,\,2$:
 \begin{equation*}
u(x_0,t;\mu_k)=-2\int_0^t \overline{\theta}_{(\mu_k)}(x_0,t-s)g_0(s) \,ds,
\qquad k=1,\; 2
 \end{equation*}
which implies 
$$
\int_0^t \overline{\theta}_{(\mu_1)}(x_0,t-s)g_0(s)\,ds
=\int_0^t \overline{\theta}_{(\mu_2)}(x_0,t-s)g_0(s)\,ds.
$$
Taking the Laplace transform in $t$ on both sides of the above equality gives
$$\Big(\L(\overline{\theta}_{(\mu_1)}(x_0,\cdot))\Big)(z)\cdot (\L g_0)(z)
=\Big(\L(\overline{\theta}_{(\mu_2)}(x_0,\cdot))\Big)(z)\cdot (\L g_0)(z).$$
Since $(\L g_0)(z)\ne 0\ \text{on}\ (0,\infty),$ 
so that 
$$\Big(\L(\overline{\theta}_{(\mu_1)}(x_0,\cdot))\Big)(z)
=\Big(\L(\overline{\theta}_{(\mu_2)}(x_0,\cdot))\Big)(z), 
\ \text{for}\ z\in (0,\infty).$$
This result and \eqref{eqn:L_theta} then give
$$
\frac{e^{(x_0-2)\Phi_1^{1/2}(z)}-e^{-x_0\Phi_1^{1/2}(z)}}{2(1-e^{-2\Phi_1^{1/2}(z)})}
=\frac{e^{(x_0-2)\Phi_2^{1/2}(z)}-e^{-x_0\Phi_2^{1/2}(z)}}
  {2(1-e^{-2\Phi_2^{1/2}(z)})},
  \ z\in(0,\infty),
$$
where 
$$\Phi_j (z)=\int_0^1 \mu_j(\alpha)z^\alpha {\rm d}\alpha,\quad j=1,2.$$
The definition of $\Psi$ and the fact $z\in (0,\infty)$ yield 
$\Phi_j^{1/2}(z)\in (0,\infty)$ and hence we can rewrite the above 
equality as 
\begin{equation}\label{eqn:equality_F}
F(\Phi_1^{1/2}(z);x_0)=F(\Phi_2^{1/2}(z);x_0),\ z\in (0,\infty),
\end{equation}
where the function $F$ comes from Lemma~\ref{lem:F}.

Since $x_0\in (0,1)$, it is obvious that
$\displaystyle{\frac{\ln (2-x_0)-\ln x_0}{2(1-x_0)}>0}$. 
Then we can pick a large $N^*\in\mathbb{N}^+$ such that 
\begin{equation*}
 \int_{\beta_0}^{\beta_1} C_\Psi\cdot (N^*)^\alpha {\rm d}\alpha 
 >\left(\frac{\ln (2-x_0)-\ln x_0}{2(1-x_0)}\right)^2,
\end{equation*}
which together with the definition of $\Psi$ gives that  
for each $z\in (0,\infty)$ with $z\ge N^*,$ 
$\Phi_j(z)\in (0,\infty)$ and  
$$\Phi_j^{1/2}(z)>\frac{\ln (2-x_0)-\ln x_0}{2(1-x_0)},\quad j=1,2.$$
This result means that 
\begin{equation}\label{eqn:inequality_Phi}
\Phi_j^{1/2}(nN^*)>\frac{\ln (2-x_0)-\ln x_0}{2(1-x_0)},\ j=1,2,\ \N+.
\end{equation}
Lemma~\ref{lem:F} shows that $F(\cdot;x_0)$ is strictly increasing on the 
interval $\bigl(\frac{\ln (2-x_0)-\ln x_0}{2(1-x_0)}, \infty\bigr)$, 
which together with \eqref{eqn:equality_F} and \eqref{eqn:inequality_Phi} 
yields 
$$\Phi_1^{1/2}(nN^*)=\Phi_2^{1/2}(nN^*),\ \N+,$$
that is
$\Phi_1(nN^*)=\Phi_2(nN^*),\ \N+$,
sequentially, we have 
\begin{equation*}
 \int_0^1 (\mu_1(\alpha)-\mu_2(\alpha)) (nN^*)^\alpha {\rm d}\alpha=0,
 \ \N+.
\end{equation*}
We can rewrite the above result as 
$\,\langle \mu_1(\alpha)-\mu_2(\alpha),(nN^*)^\alpha\rangle=0$ for $\N+$.
From the completeness of $\{(nN^*)^\alpha:\N+\}$ in $L^2[0,1]$ which is 
ensured by Lemma~\ref{lem:dense}, we have $\mu_1-\mu_2=0$ in $L^2[0,1]$, 
that is,
$\,\|\mu_1-\mu_2\|_{L^2[0,1]}=0$,
which together with the continuity of $\mu_1$ and $\mu_2$ shows that 
$\mu_1=\mu_2$ on $[0,1].$ 

For the case of $\frac{\partial u}{\partial x} 
(x^\star,t;\mu_1)=\frac{\partial u}{\partial x}(x^\star,t;\mu_2),$ 
following \eqref{eqn:L_theta} we have 
  \begin{equation*}\label{eqn:L_theta_x}
  \begin{split}
   &\quad\ \L\left(\frac{\partial \overline{\theta}_{(\mu)}}{\partial x}(x,t)\right)
   =\L \left[\kappa * \left(\frac{\partial ^3}{\partial t\partial x^2}
   \sum_{m=-\infty}^{\infty} G_{(\mu)}(x,t)\right)\right]\\
   &=\L \left[\kappa *\L^{-1}\left(\sum_{m=-1}^{-\infty} 
   \frac{\Phi^{3/2}(z)}{2}  e^{\Phi^{1/2}(z)(x+2m)}{\rm d}z
  +\sum_{m=0}^{\infty}\frac{\Phi^{3/2}(z)}{2}
  e^{-\Phi^{1/2}(z)(x+2m)}\right)\right]\\
  &=\frac{1}{\Phi(z)}\left(\sum_{m=-1}^{-\infty} \frac{\Phi^{3/2}(z)}{2}e^{\Phi^{1/2}(z)(x+2m)}
  +\sum_{m=0}^{\infty} \frac{\Phi^{3/2}(z)}{2}e^{-\Phi^{1/2}(z)(x+2m)}\right)\\
  &=\frac{\Phi^{1/2}(z)e^{(x-2)\Phi^{1/2}(z)}+\Phi^{1/2}(z)e^{-x\Phi^{1/2}(z)}}
  {2(1-e^{-2\Phi^{1/2}(z)})}.
  \end{split}
 \end{equation*}
Following the proof for the case $u(x_0,t;\mu_1)=u(x_0,t;\mu_2)$, 
we can deduce $\mu_1=\mu_2$ from the above result and Lemmas \ref{lem:F_f}
and \ref{lem:dense}. 
\end{proof}

\begin{remark}
In this paper we have considered only the uniqueness question for the function
$\mu(\alpha)$.
Certainly, one would like to know under what conditions this function
can be effectively recovered from the given data.
Clearly this is an important question, but we caution there are many
difficulties, especially with a mathematical analysis of the stability issue
of $\mu$ in terms of the overposed data either $u(x_0,t)$ or $\frac{\partial u}{\partial x}(x^\star,t)$.
One can certainly employ the representation result of
section~\ref{sect:representation} to obtain a nonlinear integral equation for
$\mu$ but the analysis of this is unclear.
An alternative approach would be restrict the function $\mu$ as in
Lemma~\ref{lem:kappa} to ensure that  $\kappa$ is completely monotone and hence
use Bernstein's theorem to obtain an integral representation for this function.
We hope to address some of these questions in subsequent work.
\end{remark}


\section*{Appendix}
The uniqueness proof in section \ref{inverse_problem} requires results on the density of
a certain subset of functions and we give two ways to look at this through
different formulations; namely the Stone-Weierstrass and M\"untz-Sz\'asz
theorems. 
We give the statements of these results below.

The Stone-Weierstrass theorem is a generalization of Weierstrass' result of
1885 that the polynomials are dense in $C[0,1]$ and was proved by
Stone some 50 years later, \cite{Stone:1948}.
If $X$ is a compact Hausdorff space and $C(X)$ those real-valued continuous
functions on $X$, with the topology of uniform convergence, then the question
is when is a subalgebra $A(X)$ dense?
A crucial notion is that of separation of points;
a set $A$ of functions defined on $X$ is said to {\it separate points\/} if,
for every $x,y\in X$, $x\not=y$,
there exists a function $f\in A$ such that $f(x) \not= f(y)$. 
Then we have

\begin{theorem}(Stone--Weierstrass).
Suppose $X$ is a compact Hausdorff space and $A$ is a subalgebra of
$C(X)$ which contains a non-zero constant function.
Then $A$ is dense in $C(X)$ if and only if it separates points.
\end{theorem}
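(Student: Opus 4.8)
The plan is to treat the two implications separately, with nearly all of the work going into sufficiency. For the necessity direction I would argue by contraposition: if $A$ does not separate points, there exist $x\neq y$ in $X$ with $f(x)=f(y)$ for every $f\in A$, and this equality survives passage to uniform limits, so it holds for every $f\in\overline{A}$. Since a compact Hausdorff space is normal and singletons are closed, Urysohn's lemma produces a continuous function separating $x$ and $y$, so $\overline{A}\neq C(X)$ and $A$ is not dense. At the outset I would also record that, because $A$ is a subalgebra and in particular a vector space containing a nonzero constant, it contains every constant function, and that $B:=\overline{A}$ is again a subalgebra (addition and multiplication are continuous in the sup norm on bounded sets).

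The heart of the matter is to show that $B$ is a sublattice of $C(X)$, i.e. closed under the pointwise operations $\max$ and $\min$. Since $\max(f,g)=\tfrac12(f+g)+\tfrac12|f-g|$ and $\min(f,g)=\tfrac12(f+g)-\tfrac12|f-g|$, it suffices to prove that $f\in B$ implies $|f|\in B$. I expect this to be the main obstacle, and it is precisely where a one-dimensional approximation result is unavoidable. For $f\in B$ with $\|f\|_\infty\le M$, I would approximate $s\mapsto\sqrt{s}$ uniformly on $[0,M^2]$ by polynomials, for instance via the explicit recursion $p_{k+1}(s)=p_k(s)+\tfrac12(s-p_k(s)^2)$ with $p_0=0$, which is elementary to show increases monotonically to $\sqrt{s}$ uniformly on $[0,1]$ (Dini's theorem) and then rescales to $[0,M^2]$. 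Because $B$ is a closed algebra containing the constants, each $p_k(f^2)\in B$, and $p_k(f^2)\to\sqrt{f^2}=|f|$ uniformly, whence $|f|\in B$.

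Next I would establish two-point interpolation: for distinct $x,y\in X$ and any reals $a,b$ there is an $h\in A$ with $h(x)=a$ and $h(y)=b$. Picking $g_0\in A$ with $g_0(x)\neq g_0(y)$ (separation) and combining it with the constants already present, one solves a nonsingular $2\times2$ linear system to obtain the affine combination $h=\lambda g_0+\nu$ with the prescribed values. This step uses only that $A$ separates points, is a subalgebra, and contains the constants, so it applies equally to $B$.

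Finally I would run the standard lattice-plus-compactness argument. Fix $g\in C(X)$ and $\epsilon>0$. For each pair $(x,y)$ choose $h_{x,y}\in B$ interpolating $g$ at $x$ and $y$. Holding $x$ fixed, each $y$ has an open neighborhood on which $h_{x,y}<g+\epsilon$; compactness yields a finite subcover, and the minimum of the corresponding finitely many functions (legitimate since $B$ is a lattice) gives $h_x\in B$ with $h_x(x)=g(x)$ and $h_x<g+\epsilon$ on all of $X$. Now each $x$ has an open neighborhood on which $h_x>g-\epsilon$; a second use of compactness and taking the maximum of finitely many such $h_x$ produces $h\in B$ with $g-\epsilon<h<g+\epsilon$ everywhere, i.e. $\|h-g\|_\infty\le\epsilon$. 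Hence $g\in\overline{B}=B=\overline{A}$, proving density. The only genuinely analytic ingredient is the uniform polynomial approximation of $\sqrt{\cdot}$ in the absolute-value lemma; everything else reduces to point-set topology and linear algebra.
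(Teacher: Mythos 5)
Your proof is correct and is the classical lattice-theoretic argument: pass to the closed subalgebra $\overline{A}$, obtain $|f|\in\overline{A}$ by uniform polynomial approximation of $\sqrt{\cdot}$ (so $\overline{A}$ is a lattice), use separation plus the constants for two-point interpolation, and finish with the double compactness argument, with necessity handled by Urysohn's lemma. The paper itself offers no proof but defers to the standard reference (Folland, Theorem~4.45), which presents essentially this same argument, so your route coincides with the one the paper points to.
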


The proof can be found in standard references, for example,
\cite[Theorem 4.45]{Folland:2013}.

The M\"untz-Sz\'asz theorem, (1914-1916) is also a generalization of
the Weierstrass approximation theorem; it gives a condition under which
one can ``thin out'' the polynomials and still maintain a dense set.

\begin{theorem}(M\"untz--Sz\'asz)
Let $\Lambda := \{\lambda_j\}_1^\infty$ be a sequence of real positive numbers.
Then the span of $\{1,x^{\lambda_1},x^{\lambda_2},\ldots\,\}$
is dense in $C[0,1]$ if and only if
$\sum_1^\infty\frac{1}{\lambda_j} = \infty$.
\end{theorem}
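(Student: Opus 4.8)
The plan is to prove both implications through the duality between $C[0,1]$ and its dual space $M[0,1]$ of finite signed Borel measures, thereby converting a statement about uniform approximation into an injectivity statement for an analytic transform. By the Hahn--Banach theorem together with the Riesz representation theorem, the span of $\{1,x^{\lambda_1},x^{\lambda_2},\dots\}$ is dense in $C[0,1]$ if and only if the only measure $\nu\in M[0,1]$ satisfying $\int_0^1 d\nu=0$ and $\int_0^1 x^{\lambda_j}\,d\nu=0$ for all $j$ is $\nu=0$. So the whole theorem becomes a single assertion about annihilating measures, and I would phrase everything in those terms. As a preliminary reduction I may assume the $\lambda_j$ are distinct and, after discarding finitely many, bounded below by $1$; the representative case, and the one relevant to Lemma~\ref{lem:dense}, is $\lambda_j\to\infty$.

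The central object is, for such a measure $\nu$, the function $F(w)=\int_0^1 x^{w}\,d\nu(x)$. Since $|x^{w}|=x^{\re w}\le 1$ on $(0,1)$ when $\re w\ge 0$, $F$ is bounded by the total variation of $\nu$ and is holomorphic on the right half-plane $\Pi^{+}=\{\re w>0\}$ (holomorphy by differentiation under the integral, or Morera). The annihilation conditions say exactly that $F$ vanishes at $w=\lambda_1,\lambda_2,\dots$, while $\int d\nu=0$ records $F(0)=0$. I would then transport $F$ to the unit disk by the Cayley map $w\mapsto\zeta=\tfrac{w-1}{w+1}$, which carries $\Pi^{+}$ onto $\mathbb{D}$ and sends each $\lambda_j$ to $\alpha_j=\tfrac{\lambda_j-1}{\lambda_j+1}\in[0,1)$. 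The resulting $g(\zeta)$ is bounded and holomorphic on $\mathbb{D}$ with zeros at the $\alpha_j$, and a direct computation gives $1-|\alpha_j|=\tfrac{2}{\lambda_j+1}$, so that $\sum_j(1-|\alpha_j|)=\infty$ precisely when $\sum_j 1/\lambda_j=\infty$. This identity is the bridge that turns the hypothesis on $\sum 1/\lambda_j$ into the Blaschke condition on $\{\alpha_j\}$.

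For the sufficiency direction, suppose $\sum 1/\lambda_j=\infty$. Then the Blaschke condition $\sum(1-|\alpha_j|)<\infty$ fails, so by the classical theorem on zero sets of bounded holomorphic functions on the disk, any bounded holomorphic $g$ vanishing at all $\alpha_j$ must vanish identically; hence $F\equiv 0$ on $\Pi^{+}$. Evaluating at the positive integers gives $\int_0^1 x^{n}\,d\nu=0$ for every $n\ge 1$, and together with $\int_0^1 d\nu=0$ this says $\nu$ annihilates every polynomial. Since the polynomials are dense in $C[0,1]$ by Weierstrass, $\nu=0$, and by the duality reduction the span is dense. This is the implication actually invoked in Lemma~\ref{lem:dense}, with $\lambda_j=\log(rn)$.

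For the necessity direction I would argue by contraposition: assuming $\sum 1/\lambda_j<\infty$, I would show that $x^{\lambda}$ fails to lie in the closed span whenever $\lambda>0$ and $\lambda\notin\{\lambda_j\}$. Now the Blaschke condition holds, so there is a nonzero bounded holomorphic function on $\mathbb{D}$ with zeros at the $\alpha_j$; the task is to manufacture from it a nonzero measure $\nu$ on $[0,1]$ with $\int 1\,d\nu=\int x^{\lambda_j}\,d\nu=0$ for all $j$ yet $\int x^{\lambda}\,d\nu\ne 0$, which by duality separates $x^{\lambda}$ from the span. Following Rudin, one builds $\nu$ through a contour integral of $x^{w}$ against a product of Blaschke-type factors engineered to vanish at every $\lambda_j$ and at $w=0$ but not at $w=\lambda$. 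Verifying that this contour integral defines a genuine bounded measure and that its moments behave as required is the main obstacle: the forward moment map $\nu\mapsto F$ is elementary, but inverting it---reconstructing an annihilating measure from a prescribed bounded analytic function---is delicate, and is where the real work of the necessity half resides.
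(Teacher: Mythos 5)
A preliminary remark: the paper does not prove this theorem at all --- it is quoted in the appendix as a classical result, with pointers to the literature (e.g.\ \cite{BorweinErdelyi:1996}), and only its ``if'' direction is ever used, inside Lemma~\ref{lem:dense}. So there is no in-paper proof to compare against, and your proposal must stand as a self-contained proof of the quoted biconditional. Its sufficiency half does: the Hahn--Banach/Riesz reduction to annihilating measures, the bounded holomorphic function $F(w)=\int_0^1 x^w\,d\nu(x)$ on $\{\re w>0\}$, the Cayley transfer to the disk, the identity $1-|\alpha_j|=2/(\lambda_j+1)$ converting $\sum 1/\lambda_j=\infty$ into failure of the Blaschke condition, hence $F\equiv 0$, evaluation at the positive integers, and Weierstrass --- this is the classical Rudin argument, correctly assembled, and it is exactly the implication the paper's Lemma~\ref{lem:dense} needs (there with $\lambda_j=\log(rn)\to\infty$).

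Two genuine gaps remain. First, your necessity half is an announcement, not a proof: you name the Blaschke-type contour construction and then concede that verifying it ``is where the real work resides.'' That verification \emph{is} the content of the ``only if'' direction --- in the standard argument one builds a bounded analytic function on $\{\re w>-1\}$ vanishing precisely at the $\lambda_j$ (and at $0$, but not at $\lambda$) with decay $O(|w|^{-2})$, realizes it as the transform of an integrable density, and pulls that back to a nonzero annihilating measure on $[0,1]$, checking $\int x^\lambda\,d\nu\neq 0$; none of this is carried out, so the biconditional is only half proved. Second, your opening reduction ``after discarding finitely many, $\lambda_j\ge 1$'' silently assumes the exponents do not accumulate at $0$, whereas the statement allows an arbitrary positive sequence. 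This is not cosmetic: for $\lambda_j<1$ one has $1-|\alpha_j|=2\lambda_j/(1+\lambda_j)$, not $2/(\lambda_j+1)$, so for $\lambda_j=j^{-2}$ the series $\sum 1/\lambda_j$ diverges while the Blaschke condition \emph{holds}; by the full M\"untz theorem (density iff $\sum\lambda_j/(\lambda_j^2+1)=\infty$, see \cite{BorweinErdelyi:1996}) the span is then \emph{not} dense, so the theorem exactly as quoted is false and your sufficiency argument breaks precisely at that reduction. You should add the standing hypothesis $\inf_j\lambda_j>0$ (or $\lambda_j\to\infty$, as in Rudin's formulation and in the paper's application), under which both your Cayley computation and the discard-finitely-many step become legitimate.
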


This result can be generalized to the $L^p[0,1]$ spaces for $1\leq p\leq \infty$,
see \cite{BorweinErdelyi:1996}.


\section*{Acknowledgment}
The authors were partially supported by NSF Grant DMS-1620138.

\bibliographystyle{abbrv}
\bibliography{frac}

\begin{thebibliography}{10}

\bibitem{BorweinErdelyi:1996}
P.~Borwein and T.~Erd{\'e}lyi.
\newblock The full {M}\"untz theorem in {$C[0,1]$} and {$L_1[0,1]$}.
\newblock {\em J. London Math. Soc. (2)}, 54(1):102--110, 1996.

\bibitem{Cannon:1984}
J.~R. Cannon.
\newblock {\em The {O}ne-{D}imensional {H}eat {E}quation}.
\newblock Addison-Wesley, Reading, MA, 1984.

\bibitem{cheng2009uniqueness}
J.~Cheng, J.~Nakagawa, M.~Yamamoto, and T.~Yamazaki.
\newblock Uniqueness in an inverse problem for a one-dimensional fractional
  diffusion equation.
\newblock {\em Inverse Problems}, 25(11):115002, 16, 2009.

\bibitem{Djrbashian:1989}
M.~M. Djrbashian.
\newblock Differential operators of fractional order and boundary value
  problems in the complex domain.
\newblock In {\em {The Gohberg Anniversary Collection, Operator Theory:
  Advances and Applications Volume 41}}, pages 153--172. 1989.

\bibitem{Einstein:1905b}
A.~Einstein.
\newblock \"{U}ber die von der molekularkinetischen {T}heorie der {W}\"{a}rme
  geforderte {B}ewegung von in ruhenden {F}l\"{u}ssigkeiten suspendierten
  {T}eilchen.
\newblock {\em Ann. Phys.}, 322(8):549--560, 1905.

\bibitem{Folland:2013}
G.~B. Folland.
\newblock {\em Real analysis}.
\newblock Pure and Applied Mathematics (New York). John Wiley \& Sons, Inc.,
  New York, second edition, 1999.
\newblock Modern techniques and their applications, A Wiley-Interscience
  Publication.

\bibitem{GorenfloKilbasMainardiRogosin:2014}
R.~Gorenflo, A.~A. Kilbas, F.~Mainardi, and S.~V. Rogosin.
\newblock {\em Mittag-{L}effler {F}unctions, {R}elated {T}opics and
  {A}pplications}.
\newblock Springer, Heidelberg, 2014.

\bibitem{HatanoHatano:1998}
Y.~Hatano and N.~Hatano.
\newblock Dispersive transport of ions in column experiments: {A}n explanation
  of long-tailed profiles.
\newblock {\em Water Resour. Res.}, 34(5):1027--1033, 1998.

\bibitem{HatanoNakagawaWangYamamoto:2013}
Y.~Hatano, J.~Nakagawa, S.~Wang, and M.~Yamamoto.
\newblock Determination of order in fractional diffusion equation.
\newblock {\em J. Math. Industry}, 5(A):51--57, 2013.

\bibitem{JinRundell:2015}
B.~Jin and W.~Rundell.
\newblock A tutorial on inverse problems for anomalous diffusion processes.
\newblock {\em Inverse Problems}, 31(3):035003, 40, 2015.

\bibitem{KlafterSokolov:2011}
J.~Klafter and I.~M. Sokolov.
\newblock {\em First {S}teps in {R}andom {W}alks}.
\newblock Oxford University Press, Oxford, 2011.
\newblock From tools to applications.

\bibitem{Kochubei:2008}
A.~N. Kochubei.
\newblock Distributed order calculus and equations of ultraslow diffusion.
\newblock {\em J. Math. Anal. Appl.}, 340(1):252--281, 2008.

\bibitem{Li2013Simultaneous}
G.~Li, D.~Zhang, X.~Jia, and M.~Yamamoto.
\newblock Simultaneous inversion for the space-dependent diffusion coefficient
  and the fractional order in the time-fractional diffusion equation.
\newblock {\em Inverse Problems}, 29(6):065014, 36, 2013.

\bibitem{LiLiuYamamoto:2015}
Z.~Li, Y.~Liu, and M.~Yamamoto.
\newblock Initial-boundary value problems for multi-term time-fractional
  diffusion equations with positive constant coefficients.
\newblock {\em Appl. Math. Comput.}, 257:381--397, 2015.

\bibitem{li2016analyticity}
Z.~Li, Y.~Luchko, and M.~Yamamoto.
\newblock Analyticity of solutions to a distributed order time-fractional
  diffusion equation and its application to an inverse problem.
\newblock {\em Computers \& Mathematics with Applications}, 2016.

\bibitem{LiYamamoto:2015}
Z.~Li and M.~Yamamoto.
\newblock Uniqueness for inverse problems of determining orders of multi-term
  time-fractional derivatives of diffusion equation.
\newblock {\em Appl. Anal.}, 94(3):570--579, 2015.

\bibitem{Luchko:2009b}
Y.~Luchko.
\newblock Boundary value problems for the generalized time-fractional diffusion
  equation of distributed order.
\newblock {\em Fract. Calc. Appl. Anal.}, 12(4):409--422, 2009.

\bibitem{MMPG:2008}
F.~Mainardi, A.~Mura, G.~Pagnini, and R.~Gorenflo.
\newblock Time-fractional diffusion of distributed order.
\newblock {\em J. Vib. Control}, 14(9-10):1267--1290, 2008.

\bibitem{MontrollWeiss:1965}
E.~W. Montroll and G.~H. Weiss.
\newblock Random walks on lattices. {II}.
\newblock {\em J. Math. Phys.}, 6(2):167--181, 1965.

\bibitem{Naber:2003}
M.~Naber.
\newblock Distributed order fractional sub-diffusion.
\newblock {\em Fractals}, 12(1):23--32, 2004.

\bibitem{RundellXuZuo:2013}
W.~Rundell, X.~Xu, and L.~Zuo.
\newblock The determination of an unknown boundary condition in a fractional
  diffusion equation.
\newblock {\em Appl. Anal.}, 92(7):1511--1526, 2013.

\bibitem{Scher_Montroll:1975}
H.~Scher and E.~W. Montroll.
\newblock Anomalous transit-time dispersion in amorphous solids.
\newblock {\em Phys. Rev. B}, 12:2455--2477, Sep 1975.

\bibitem{SokolovKlafterBlumen:2002}
I.~M. Sokolov, J.~Klafter, and A.~Blumen.
\newblock Fractional kinetics.
\newblock {\em Physics Today}, 55(11):48--54, 2002.

\bibitem{Stone:1948}
M.~H. Stone.
\newblock The generalized {W}eierstrass approximation theorem.
\newblock {\em Math. Mag.}, 21:167--184, 237--254, 1948.

\bibitem{WhittakerWatson:1962}
E.~T. Whittaker and G.~N. Watson.
\newblock {\em A course of modern analysis. {A}n introduction to the general
  theory of infinite processes and of analytic functions: with an account of
  the principal transcendental functions}.
\newblock Fourth edition. Reprinted. Cambridge University Press, New York,
  1962.

\end{thebibliography}

\end{document}